\documentclass[12pt]{amsart}
\usepackage{amsthm}
\newtheorem{theorem}{Theorem}[section]
\newtheorem{lemma}[theorem]{Lemma}

\theoremstyle{definition}
\newtheorem{definition}[theorem]{Definition}

\theoremstyle{remark}
\newtheorem{remark}[theorem]{Remark}

\counterwithin*{section}{part}

\usepackage{amssymb}
\usepackage{empheq}
\usepackage{stmaryrd}
\usepackage{enumerate}
\usepackage[shortlabels]{enumitem}
\usepackage{calc}
\usepackage{url}
\usepackage{comment}

\newcommand{\norm}[1]{\left\lVert#1\right\rVert}
\usepackage{mathtools}
\DeclarePairedDelimiter{\ceil}{\lceil}{\rceil}

\def\EE{\mathbb{E}}
\def\PP{\mathbb{P}}
\def\NN{\mathbb{N}}

\def\RR{\mathbb{R}}

\newcommand{\dcr}[3]{D_{{#1},{#2}}({#3})}

\newcommand{\AS}{\quad\PP\text{-a.s.}}

\usepackage[left=2.0cm,
                right=2.0cm,
                top=2.5cm,
                bottom=3.5cm,
                headheight=12pt,
                a4paper]{geometry}

\newcommand{\flucinf}[2]{J_{{#1}}\left({#2}\right)}
\newcommand{\fluc}[3]{J_{{#1}, {#2}}\left({#3}\right)}

\newcommand{\ql}[2]{\left\langle\overrightarrow{#1},\overrightarrow{#2}\right\rangle}

\begin{document}

\title[Generalized fluctuation bounds for stochastic algorithms]{Generalized fluctuation bounds for stochastic algorithms in the presence of compactness}

\author[M. Neri, N. Pischke, T. Powell]{Morenikeji Neri${}^{\MakeLowercase a}$, Nicholas Pischke${}^{\MakeLowercase b}$, Thomas Powell${}^{\MakeLowercase b}$}
\date{\today}
\maketitle
\vspace*{-5mm}
\begin{center}
{\scriptsize 
${}^a$ Department of Mathematics, Technische Universit\"at Darmstadt,\\
Schlossgartenstra\ss{}e 7, 64289 Darmstadt, Germany,\\ 
${}^b$ Department of Computer Science, University of Bath,\\
Claverton Down, Bath, BA2 7AY, United Kingdom,\\
E-mails: neri@mathematik.tu-darmstadt.de, nnp39@bath.ac.uk, trjp20@bath.ac.uk}
\end{center}

\maketitle 
\begin{abstract}
We provide a convergence result for sequences of random variables taking values in a metric space that satisfy a stochastic quasi-Fej\'er monotonicity condition, in the context of a (local) compactness assumption. Our result is quantitative in that we derive an explicit and effective construction which, in terms of only a few moduli representing quantitative witnesses to key properties of the sequence of random variables and the underlying metric space involved, provides a metastable rate of pointwise convergence, a type of generalized fluctuation bound. That quantitative result in particular relies on the development of a finitary theory of martingales, culminating in a fully finitary Robbins-Siegmund theorem. We outline how this result particularises to the circumstances of the seminal work of Combettes and Pesquet on stochastic quasi-Fej\'er monotone sequences in separable Hilbert spaces, and we provide an initial application by illustrating how these results can be used to provide a metastable rate of pointwise convergence for a stochastic Krasnoselskii-Mann scheme solving a stochastic common fixed point problem for nonexpansive maps over proper Hadamard spaces. This work is set in the context of recent applications of the logic-based methodology of proof mining to probability theory, and represents its most sophisticated case study to date.
\end{abstract}
\noindent
{\bf Keywords:} stochastic quasi-Fej\'er monotonicity; martingale theory; Robbins-Siegmund theorem; generalized fluctuations; proof mining\\ 
{\bf MSC2020 Classification:} 62L20, 90C15, 47J25, 60G42, 03F10

\section{Introduction}

\subsection{Background and motivation}

Fej\'er monotonicity is one of the most fundamental concepts in the modern study of numerical algorithms. At its core, it captures the idea that a method $(x_n)$ in some metric space $(X,d)$ decreases towards a target set $F\subseteq X$, or more formally:
\[
\forall n\in\NN\ \forall z\in F\left(d(x_{n+1},z)\leq d(x_n,z)\right).
\] 
Extensions of this basic property have proven to be powerful organisational principles in the convergence analysis of algorithms, in which $F$ generally represents some set of solutions to a problem under consideration. This is primarily because Fej\'er monotonicity occupies a unique position where (1) large classes of complex algorithms conform to one of its many variants and (2) it allows for a streamlined convergence analysis of associated methods, where additional properties which guarantee convergence can be articulated in a similarly abstract and uniform way. 

Historically, the name Fej\'er monotonicity for the above concept was coined by Motzkin and Schoenberg \cite{MotzkinSchoenberg1954}, after a use made of a similar concept by Fej\'er in \cite{Fejer1922} as well as some subsequent work. While these works were based on Euclidean spaces, extensions to Hilbert spaces along with the a development of a large amount of the related theory is due to Eremin \cite{Eremin1968a,Eremin1968b} and Ermol'ev \cite{Ermolev1969,Ermolev1971,ErmolevTuniev1973}, together with other exponents of the ``Russian school'' of convex programming. Much of the modern appreciation of Fej\'er monotonicity as a unifying tool can be attributed to the work of Bauschke and Combettes, especially the surveys and expositions of Combettes \cite{Combettes2001,Combettes2009} and the central monograph \cite{BauschkeCombettes2017}. Today, the zoo of variants of Fej\'er monotonicity is vast, incorporating, for example, varying metrics \cite{CombettesVu2013}, adaptations to different distance functions like Bregman distances \cite{BauschkeBorweinBorwein2003} or very general classes of distances on metric spaces \cite{Pischke2025} (see also \cite{NeriPischkePowell2025}), or extensions and variations for different classes of spaces like Hadamard spaces \cite{BacakSearstonSims2012} or general metric spaces \cite{KohlenbachLeusteanNicolae2018,KohlenbachLopezAcedoNicolae2019}. The most recent work in this direction includes so-called Fej\'er$^*$ monotonicity \cite{ArakcheevBauschke2025b,BehlingBelloCruzIusemLiuSantos2024,BehlingBelloCruzIusemAlvesRibeiroSantos2024} as well as the extensive generalization of Fej\'er monotonicity introduced by Kohlenbach and Pinto \cite{KohlenbachPinto2023}. Several of the aforementioned works also consider quantitative aspects in one way or the other.

In the case of stochastic algorithms, Fej\'er monotonicity naturally lifts to a corresponding supermartingale property, which when extended with error terms becomes satisfied by a vast array of stochastic methods. So-called stochastic quasi-Fej\'er monotonicity already originated in the work of Ermol'ev \cite{Ermolev1969,Ermolev1971,ErmolevTuniev1973}, and has more recently been isolated as an abstract concept in the more general setting of Hilbert spaces in the work of Combettes and Pesquet \cite{CombettesPesquet2015} (see also \cite{CombettesPesquet2019} for a quantitative perspective and \cite{Pischke2026} for extensions of the main abstract result from \cite{CombettesPesquet2015} to Hadamard spaces), where it is subsequently used to establish convergence of a range of optimization algorithms. In the generalised setting of \cite{CombettesPesquet2015}, a stochastic algorithm $(x_n)$ adapted to some filtration $(\mathcal{F}_n)$ over a probability space $(\Omega,\mathsf{F},\PP)$ and taking values in some separable Hilbert space $(X,\norm{\cdot})$ is stochastically quasi-Fej\'er monotone w.r.t.\ a set $F\subseteq X$ if
\[
\EE[\phi(\norm{x_{n+1}-z})\mid \mathcal{F}_n]+\theta_n(z)\leq (1+\chi_n(z))\phi(\norm{x_n-z})+\eta_n(z) \;\; \PP\text{-a.s.}
\label{stoch:Fej}\tag{$\ast$}
\]
for all $z\in F$ and $n\in\NN$, where $\phi:[0,\infty)\to [0,\infty)$ is strictly increasing and coercive (i.e.\ $\lim_{t\to+\infty}\phi(t)=+\infty$), and $(\theta_n(z))$ as well as $(\chi_n(z)),(\eta_n(z))$ are sequences of random variables adapted to $(\mathcal{F}_n)$, with the latter two assumed to be summable almost surely.

In the special case that $F=\{z\}$, i.e.\ in case the solution to the problem is actually unique, the convergence analysis of (stochastically) quasi-Fej\'er monotone sequences simplifies considerably, and it is often possible to prove strong convergence under minimal additional assumptions. Moreover, in such a case one can often provide explicit rates of convergence towards this solution (provided one has a sufficient quantitative rendering of the above uniqueness property). In the stochastic setting, a classic example of this is given by stochastic gradient methods for strongly convex functions (or more generally the Robbins-Monro procedure for strongly monotone operators), whose standard convergence proof via martingales fits into the above framework exactly. Building on preceding work for deterministic sequences in \cite{KohlenbachLopezAcedoNicolae2019}, such reasoning was exploited recently in work of the authors \cite{NeriPischkePowell2025} to provide rates of convergence for stochastically quasi-Fej\'er monotone sequences in the context of uniqueness and in an abstract setting, in particular working over metric spaces and general classes of distance.\footnote{Such results can even be generalized towards a selected collection of non-unique problems, provided the problems are sufficiently regular. An abstract study of such suitable regularity conditions was given in \cite{KohlenbachLopezAcedoNicolae2019} for deterministic sequences. An adaptation of such general results to the case of stochastic sequences is forthcoming work by the second and third author.} Similar results, albeit restricted to Hilbert spaces and specific forms of problems and types of regularity, can also be found in \cite{CombettesPesquet2019}.

However, where solutions are not necessarily unique (or at least regular), establishing strong convergence is more subtle, and generally relies on a (local) compactness assumption. Crucially, and already for deterministic algorithms, effective rates of convergence are then generally no longer possible.\footnote{This can in fact be made formally precise using tools from mathematical logic and computability theory, using so-called Specker sequences \cite{Specker1949} (see in particular also \cite{Neumann2015}). We refer to \cite{KohlenbachLeusteanNicolae2018,KohlenbachLopezAcedoNicolae2019} for further discussions.} The resulting problem of establishing suitable quantitative results for deterministic Fej\'er monotone sequences in the presence of compactness in such general cases was first considered by Kohlenbach, Leu\c{s}tean and Nicolae \cite{KohlenbachLeusteanNicolae2018}. Roughly speaking, there it is shown that if $(x_n)$ is a quasi-Fej\'er monotone sequence in some compact space $X$ with respect to some $F$ that is closed in a certain explicit sense, and $(x_n)$ satisfies an approximation property with respect to $F$, then $(x_n)$ strongly converges to a point in $F$, and moreover, given suitable quantitative versions of these assumptions, an explicit and highly uniform bound for $\exists N$ in the following \emph{metastable} formulation of the Cauchy property can be given:
\[
\forall k\in\NN\ \forall g:\NN\to\NN\ \exists N\in\NN\  \forall i,j\in [N;N+g(N)]\left(d(x_i,x_j)\leq \frac{1}{k+1}\right),
\]
where we write $[n;m]:=[n,m]\cap\NN$. Metastable convergence (and more generally metastable versions of infinitary concepts) has been popularised by Tao (see \cite{Tao2008a} and also \cite{Tao2008b}), and are by now frequently utilised across various parts of analysis and optimization as it is typically possible to find explicit rates of metastability (i.e.\ bounds on $\exists N$ above, in terms of $k$ and $g$) even in situations where computable rates of convergence or the like do not exist, and thus metastability provides a much needed substitute for quantitative information on the convergence behaviour in these cases. Informally speaking, rates of metastability represent an alternative quantitative characterisation of convergence in terms of generalised \emph{fluctuation} behaviour of the sequence $(x_n)$, providing for arbitrary functions $g:\NN\to\NN$ a bound on the existence of an interval $[N;N+g(N)]$ in which no $1/(k+1)$-fluctuations can occur.\footnote{For more details on how rates of metastability can be viewed as generalised fluctuation bounds see \cite{KohlenbachSafarik2014}, and for stochastic processes the more recent \cite{NeriPischkePowell2025b}.}

These first general quantitative results for Fej\'er monotone sequences in the presence of compactness date back to almost a decade ago, where they are roughly contemporaneous with the revived study of stochastic quasi-Fej\'er monotonicity for non-unique solution sets mentioned previously. However, quantitative results for the latter were never considered, and indeed the underlying techniques from mathematical logic that facilitated the construction of rates of metastability in \cite{KohlenbachLeusteanNicolae2018} (see the discussion in the following sections, in particular Section \ref{sec:introLogic}) were, with a few isolated exceptions, never deployed in a probabilistic setting until a few years ago. Here one is confronted with a series of significant obstacles, including the increased logical complexity of almost-sure statements, and the arsenal of highly technical machinery required for the convergence analysis of stochastic quasi-Fej\'er monotone sequences, chief among them the Robbins-Siegmund supermartingale convergence theorem \cite{RobbinsSiegmund1971}. 

Leveraging some recent results which bring these logical techniques to bear on probability for the first time, we tackle these obstacles and provide a first quantitative study of stochastic algorithms in the presence of compactness, representing simultaneously a first quantitative analysis of stochastic quasi-Fej\'er monotonicity for non-unique solution sets, and a stochastic extension of \cite{KohlenbachLeusteanNicolae2018} of considerable additional complexity. Our main achievements include new, general notions of stochastic quasi-Fej\'er monotonicity, convergence theorems that are both qualitatively and quantitatively novel, and in particular come equipped with fluctuation information, and the development of a fully finitary theory of supermartingales that is both necessary for our main results and represents a base for future work in quantitative stochastic optimization. The following section now discusses these contributions at some further depth.

\subsection{Overview of the main technical results}

Our central object of study is a new and highly general finitary variant of stochastic quasi-Fej\'er monotonicity for algorithms taking values in a general metric space $(X,d)$. Taking inspiration from \cite{KohlenbachLeusteanNicolae2018}, we consider solution sets that can be decomposed in an abstract way as $F=\bigcap_{k\in\NN} AF_k$ via a sequence of measurable approximation sets $AF_0\subseteq AF_1\subseteq\ldots$, giving us the flexibility to uniformly capture various problem formulations and associated notions of approximate solutions from the literature on stochastic optimization.

A sequence $(x_n)$ adapted to some filtration $(\mathcal{F}_n)$ as above is then called \emph{uniformly stochastically quasi-Fej\'er monotone} w.r.t.\ $F$, $(AF_k)$ and $(\mathcal{F}_n)$ if\footnote{\label{foot:diff1}This notion omits the negative correction term $(\theta_n)$ from \eqref{stoch:Fej}, which is generally superfluous for a quasi-Fej\'er type argument and often used simply for establishing an associated approximation property, which we deal with separately in this paper.}
\begin{gather*}
\forall \lambda>0\ \forall r,n\in\mathbb{N}\ \exists k\in\mathbb{N}\ \forall z\in AF_k\ \forall m\leq n\\
\left(\PP\left(\EE[G(d(x_{m+1},z))\mid \mathcal{F}_m]> (1+\chi_m)G(d(x_{m},z))+\eta_m+\frac{1}{r+1}\right)<\lambda\right),
\end{gather*}
where $G:[0,\infty)\to [0,\infty)$ is a strictly increasing, coercive and continuous function with $G(0)=0$,\footnote{\label{foot:diff2}Compared to the $\phi$ used in \eqref{stoch:Fej}, which is only continuous almost everywhere, we here assume that the function $G$ is continuous everywhere and also satisfies $G(0)=0$, as this later allows for an easier handling of approximation errors which is crucial for our arguments. However, as most functions employed in the context of Fej\'er monotonicity satisfy these extended assumptions, this is a very mild restriction.} and $(\eta_n),(\chi_n)$ are adapted to $(\mathcal{F}_n)$ and almost surely summable.\footnote{\label{foot:diff3}The move to an approximate solution $z\in AF_k$ in the above notion generally renders the dependence of $(\eta_n),(\chi_n)$ on $z$ occurring in \eqref{stoch:Fej} senseless, which is why our notion considers the errors to be uniformly chosen. Practically, this not a great sacrifice as many, if not most, methods can be treated in the context of \eqref{stoch:Fej} already without such a dependence on $z$.} While discussed later in more detail, a rather direct compactness argument in fact shows that over finite-dimensional Hilbert spaces and under suitable boundedness assumptions on the solution set $F$ and its approximations $AF_k$, this property is implied by the previously considered notion \eqref{stoch:Fej} (for error terms uniform in $z$).

The main result of the paper is a convergence theorem for such uniformly stochastically quasi-Fej\'er monotone sequences, in which almost sure convergence of $(x_n)$ towards some $F$-valued random variable $x$ is established under a (local) compactness assumption on the space, a closure assumption of $F$, and a rather weak stochastic approximation property on the sequence $(x_n)$ formulated using $(AF_k)$ (cf.\ Theorems \ref{thm:pre_final} and \ref{thm:final}). These theorems, while novel in their own right, are however merely qualitative renderings of a full quantitative convergence theorem (cf.\ Theorem \ref{thm:mainQuant}) which provides explicit generalized fluctuation information on the sequence $(x_n)$ in the form of a bound $\Delta(\lambda,k,g)$ satisfying 
\[
\PP\left(\forall n\leq \Delta(\lambda,k,g)\ \exists i,j\in [n;n+g(n)]\left(d(x_i,x_j)>\frac{1}{k+1}\right)\right)<\lambda,
\]
for all $\lambda>0$, $k\in\mathbb{N}$ and $g:\mathbb{N}\to\mathbb{N}$, a so-called metastable rate of pointwise convergence for stochastic quasi-Fej\'er monotone sequences (also known as a $\lambda$-uniform bound for the $\varepsilon$-metastable pointwise convergence in \cite{AvigadGerhardyTowsner2010}, where the notion was first isolated in a logical context). The bound $\Delta$ can be explicitly constructed from some simple data representing quantitative formulations of the main assumptions, making it highly uniform and in particular independent of the distribution of the underlying probability space. 

We envisage our quantitative theorem being used in one of two ways: In straightforward scenarios the rather intricate general construction will simplify considerably, and then, potentially with the help of ad-hoc optimizations, could be used to provide low-complexity certificates on the fluctuation behaviour of the algorithm in question. Alternatively, where the exact definition of $\Delta$ is of less interest, its uniformity properties can still be invoked to characterise the data on which such generalized fluctuation bounds depend.

A concrete example capturing all of this is provided in Section \ref{sec:common}. Here we consider the case where $F$ is the solution set of a stochastic common fixed-point problem, i.e.
\[
F:=\{z\in X\mid T_vz=z\;\;  \PP\text{-a.s.}\},
\]
where $(T_v)_{v\in V}$ is a suitably measurable family of nonexpansive mappings and $v$ is a $V$-valued random variable, for a measurable space $(V,\mathcal{V})$. Moreover, $X$ in this case is taken to be a separable Hadamard spaces, that is a complete separable metric spaces of nonpositive curvature in the sense of Alexandrov, making use of our general metric setting. In that context, our main results are then used to prove the convergence of a stochastic Krasnoselskii-Mann scheme\footnote{Here, given $x,y\in X$ and $\lambda\in [0,1]$, we write $(1-\lambda)x\oplus \lambda y$ for the unique point $z$ on the geodesic connecting $x$ and $y$ satisfying $d(x,z)=\lambda d(x,y)$ and $d(y,z)=(1-\lambda)d(x,y)$, as it exists in Hadamard spaces.}
\[
x_{n+1}:=(1-\lambda_n)x_n\oplus \lambda_n T_{v_n}x_n
\]
under standard conditions on the step-sizes $(\lambda_n)$ and the samples $(v_n)$ of $v$ (see Theorem \ref{thm:application}), where the bound $\Delta$ is used in the second way as indicated above merely to highlight uniformities of the fluctuation information. We envisage that many more sophisticated examples of this kind are possible, and further applications will be provided in future work.

Establishing our main quantitative result requires all of the machinery that traditionally underlies the convergence of stochastic algorithms to be reformulated in a fully finitary manner, resolving both (in)equalities and probabilities via approximate errors. This includes Doob's martingale convergence theorem \cite{Doob1953} and its well-known extension to almost-supermartingales by Robbins and Siegmund \cite{RobbinsSiegmund1971}. In that way, the entire first half of the paper is dedicated to the development of a fully finitary supermartingale convergence theory. For that, we build on initial work in this direction by the first and third author \cite{NeriPowell2025a,NeriPowell2026} which provides basic fluctuation information for (almost) supermartingales, but here we require a much more careful quantitative analysis, where our definition of stochastic quasi-Fej\'er monotonicity requires us to provide fluctuation information for a weaker kind of process satisfying only an approximate finitary variant of the martingale property, namely so-called $\lambda$-$\varepsilon$-$N$-supermartingales, where the usual property
\[
\EE[X_{n+1}\mid \mathcal{F}_n]\leq X_n\;\; \PP\text{-a.s., for all }n\in\mathbb{N}
\]
is replaced by the weaker property
\[
\PP\left(\EE[X_{n+1}\mid \mathcal{F}_n]> X_n+\varepsilon\right)<\lambda,\text{ for all }n\leq N.
\]
The resulting quantitative supermartingale convergence theorem is captured as Theorem \ref{thm:fin:learnable:MCT}, and along the way involves finitary analogues of the Doob decomposition theorem (Lemma \ref{lem:decomp}), the optional stopping theorem (Lemma \ref{lem:quant:optional}), and crossing inequalities (Lemma \ref{lem:fin:dcrs:ineq}), along with a subtle quantitative treatment of integrability via so-called moduli of absolute continuity (as introduced in \cite{PischkePowell2024}). This is then extended in a similar way to the full Robbins-Siegmund theorem (Theorem \ref{thm:finRS}). All of these results are entirely independent of the subsequent analysis of Fej\'er monotone sequences, and can potentially be exploited in any situation where a finitary analogue of martingale convergence is required. Even beyond this, on our route to the main theorem in Section \ref{sec:main}, we introduce and study a number of general notions which we conjecture are of use beyond the goals of this paper. These include results on stochastic $\liminf$-moduli, and particularly the introduction of the concept of \emph{set-fluctuations} (see Lemma \ref{lem:pointtosetfluc}). We therefore consider the development of this machinery to be a contribution in its own right, independently of our main result on Fej\'er monotonicity.

\subsection{Relations to logic and proof mining}\label{sec:introLogic}

We intend our results to be primarily of practical use within stochastic optimization, or in the case of our finitary martingale convergence theory, of potentially broader relevance within quantitative probability theory. However, as already hinted at before, methodologically this paper belongs to the logic-based proof mining program, and it takes on an additional significance when viewed from this perspective. Proof mining is a subfield of mathematical logic which aims at repurposing tools and techniques from the foundations of mathematics to provide additional computational information for ineffectively proven theorems in mathematics. As such, it has found wide success in particular in the context of (nonlinear) analysis and optimization, and we refer to the seminal monograph \cite{Kohlenbach2008} as well as the surveys \cite{Kohlenbach2019,KohlenbachOliva2003} for further background on both the theory and practice of this field.

The extent to which logical insights and tools play a role in each of the many quantitative studies that are rooted in proof mining varies. In some cases, logic is used with a very light touch, often manifesting only in a certain abstract approach, or as a guide for the author in choosing the correct moduli to represent key assumptions in a quantitative way. This paper, on the other hand, makes extensive use of the logical machinery, and in terms of its overall scope and the complexity of the quantitative analysis, represents one of the most logically intricate case studies to come out of the proof mining tradition, in particular illustrated by the complexity of the general construction of the stochastic metastable bound $\Delta$.

Indeed, this paper forms a synthesis of several different strands of research in proof mining, in particular its most recent and advanced developments. First and foremost, we build on a series of papers giving quantitative meaning to Fej\'er monotone sequences in the nonstochastic setting, starting with \cite{KohlenbachLeusteanNicolae2018} and continuing in \cite{KohlenbachPinto2023,Pischke2025}, among others, similarly originating from this logical point of view. This then is lifted to stochastic sequences by further developing several recent works on quantitative aspects of martingales \cite{NeriPowell2025a,NeriPowell2026}, which in turn build on older results in ergodic theory \cite{AvigadGerhardyTowsner2010}, all likewise applications of proof mining. Finally, we make heavy use of logical insights codified by the first and second author in their so-called logical metatheorems for probability theory \cite{NeriPischke2024}, the logical foundations for applications of proof mining to that area, and further refined in the forthcoming work by the first and second author with Oliva \cite{NeriOlivaPischke2026}, particularly for arriving at the correct resolution of almost-sure statements. Lastly, the present paper complements and extends previous work of the authors on the quantitative convergence behavior of stochastic quasi-Fej\'er monotone sequences under uniqueness assumptions \cite{NeriPischkePowell2025}. As such, this paper represents a tour through many of the key recent developments in the proof mining programme, all of which are brought together to solve the problem at hand: Making quantitative sense of stochastic quasi-Fej\'er monotonicity in the presence of compactness.

Outside of proof mining it is hoped that our results will be of interest to a general logical audience as a particularly striking illustration of Hilbert's programme in action. We analyse a thoroughly modern convergence framework that makes use of a substantial theory which at first glance carries considerable set-theoretic complexity, encompassing probability measures, random variables (in a metric setting), Lebesgue integrals, conditional expectations, martingale convergence, stopping times, compactness arguments, and much more. We are nevertheless able to reduce all of this to a simple finitary core in the form of Theorem \ref{thm:mainQuant}. That a proof of this nature is amenable to such a treatment is further demonstration of the phenomenon of `proof-theoretic tameness' (see \cite{Kohlenbach2020}), and another vindication of Hilbert's programme as applied to mainstream mathematics.

All these logical aspects notwithstanding, everything that follows is formulated without explicit reference to logic and can be comprehended by a reader with no knowledge of proof mining, a typical feature of applications of proof mining. To the reader interested in some of the underlying logical aspects, we include a few brief remarks highlighting some of the logical aspects.

\subsection{Summary and organization of the paper}

In summary, our main contributions include the following, each of which may be of interest to a slightly different audience:
\begin{itemize}
\item A new, general notion of stochastic quasi-Fej\'er monotonicity in a metric setting (Definition \ref{def:uniformstochasticFejer}), and a novel strong convergence result utilising a compactness argument (Theorems \ref{thm:pre_final} and \ref{thm:final}).
\item A quantitative rendering of that convergence result, providing highly uniform generalized fluctuation information for the algorithm in the form of a stochastic metastable bound (Theorem \ref{thm:mainQuant}).
\item A concrete case study establishing the convergence of a randomized Krasnoselskii-Mann scheme for approximating common stochastic fixed points of nonexpansive mappings in Hadamard spaces (Theorem \ref{thm:application}).
\item The development of a finitary theory for supermartingale convergence (Section \ref{sec:martingale}) and its extension to almost-supermartingales (Section \ref{sec:rs}), culminating in a fully finitized Robbins-Siegmund theorem (Theorem \ref{thm:finRS}).
\item An intricate demonstration of proof mining utilising a wide selection of recent advances in that area, and, from a more philosophical perspective, an illustration of how Hilbert's programme extends to modern probability theory and stochastic programming.
\end{itemize}

Indeed, the paper is organised so that it can be read in a different way by people with different background. For example, Sections \ref{sec:martingale} and \ref{sec:rs} can be treated as a shorter standalone article on the finitary development of the Robbins-Siegmund theorem. Our quantitative framework for stochastic quasi-Fej\'er monotone sequences in Section \ref{sec:main} only requires the quantitative Robbins-Siegmund theorem at exactly one point and is otherwise entirely independent of the first part of the paper. Finally, all results can be understood in a purely qualitative way and used without reference to the finitary results as illustrated by the applications in Section \ref{sec:common}.

\subsection{Preliminaries and notation}

Throughout this paper, we fix an ambient probability space $(\Omega,\mathsf{F},\PP)$. If not specified otherwise, all probabilistic notions such as measurability, random variables, almost sureness, etc., are understood relative to that space. All properties as well as (in)equalities between random variables are understood to hold only almost surely, if not stated otherwise. We denote the indicator function of a measurable set $A\in\mathsf{F}$ by $\mathbf{1}_A$. We will later be concerned with variables taking values in metric spaces $(X,d)$, in which case we understand the notion of measurability or random variable to be defined using the Borel $\sigma$-algebra $\mathcal{B}(X)$ of that space.

Further, we fix a filtration $\mathcal{F}:=(\mathcal{F}_n)$ on that space. Given that filtration, we write $l_+(\mathcal{F})$ for the set of sequences of non-negative random variables $(\xi_n)$ that are adapted to the filtration, i.e.\ where $\xi_n$ is $\mathcal{F}_n$-measurable for all $n\in\mathbb{N}$. Further, we write $l^1_+(\mathcal{F})$ for the set of all $(\xi_n)\in l_+(\mathcal{F})$ such that $\sum_{n=0}^\infty\xi_n<+\infty$. We also call a sequence of random variables $(x_n)$ a stochastic process, and we say that this process is adapted to $\mathcal{F}$ if $x_n$ is $\mathcal{F}_n$-measurable for all $n\in\mathbb{N}$.

To specify complex events, we follows the slightly nonstandard approach of using quantifiers instead of unions and intersections. Concretely, let $\varphi(n)\in\mathsf{F}$ be a measurable property for any $n\in\mathbb{N}$. Then we write
\[
\exists n\ \varphi(n):=\bigcup_{n\in\mathbb{N}}\varphi(n)\text{ and }\forall n\ \varphi(n):=\bigcap_{n\in\mathbb{N}}\varphi(n).
\]
Naturally, this extends to properties with an arbitrary finite amount of variables and any type of quantifier prefix. Crucially, we can use the continuity of the probability measure to derive the following type of ``prenexation'' results for the above notation:

\begin{lemma}[Lemma 3.1 in \cite{NeriPowell2025a}]\label{lem:prenex}
Let $\varphi(n)\in\mathsf{F}$ be a measurable property for all $n\in\mathbb{N}$ and $p\in [0,1]$. If $\varphi$ is anti-monotone, i.e.\ $\varphi(n+1)\subseteq\varphi(n)$ for all $n\in\mathbb{N}$, then
\begin{enumerate}
\item $\PP(\forall n\, \varphi(n))\geq p$ if, and only if, $\forall n(\PP(\varphi(n))\geq p)$,
\item $\PP(\forall n\, \varphi(n))\leq p$ if, and only if, $\forall \lambda >0\exists n(\PP(\varphi(n))<p+\lambda)$.
\end{enumerate}
If $\varphi$ is monotone, i.e.\ $\varphi(n)\subseteq\varphi(n+1)$ for all $n\in\mathbb{N}$, then
\begin{enumerate}
\item $\PP(\exists n\, \varphi(n))\leq p$ if, and only if, $\forall n(\PP(\varphi(n))\leq p)$,
\item $\PP(\exists n\, \varphi(n))\geq p$ if, and only if, $\forall \lambda >0\exists n(\PP(\varphi(n))>p-\lambda)$.
\end{enumerate}
\end{lemma}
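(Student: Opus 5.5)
The whole lemma is continuity of measure from above and from below. I would prove the two anti-monotone items first and then obtain the monotone items by passing to complements.

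\emph{Anti-monotone case.} Suppose $\varphi(n+1)\subseteq\varphi(n)$. Then the sets $\varphi(n)$ form a decreasing sequence with intersection $\forall n\,\varphi(n)$, so continuity from above gives $\PP(\varphi(n))\downarrow\PP(\forall n\,\varphi(n))$. This requires no integrability assumption, since $\PP$ is finite.

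For item (1), the forward direction follows from $\forall n\,\varphi(n)\subseteq\varphi(m)$ for every $m$, which gives $\PP(\varphi(m))\geq\PP(\forall n\,\varphi(n))\geq p$. For the converse, if $\PP(\varphi(n))\geq p$ for all $n$, then the limit $\PP(\forall n\,\varphi(n))$ is also $\geq p$.

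For item (2), suppose first that $\PP(\forall n\,\varphi(n))\leq p$ and let $\lambda>0$. Because the sequence $\PP(\varphi(n))$ converges to a value at most $p$, some $n$ satisfies $\PP(\varphi(n))<p+\lambda$. Conversely, suppose such an $n$ exists for every $\lambda>0$. Then $\PP(\forall n\,\varphi(n))\leq\PP(\varphi(n))<p+\lambda$ for every $\lambda>0$, and letting $\lambda\to 0$ gives $\PP(\forall n\,\varphi(n))\leq p$.

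\emph{Monotone case.} Suppose $\varphi(n)\subseteq\varphi(n+1)$. Then the complements $\psi(n):=\Omega\setminus\varphi(n)$ are anti-monotone, and $\exists n\,\varphi(n)=\Omega\setminus\forall n\,\psi(n)$.

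For item (1), the condition $\PP(\exists n\,\varphi(n))\leq p$ is equivalent to $\PP(\forall n\,\psi(n))\geq 1-p$. By anti-monotone item (1), this holds iff $\PP(\psi(n))\geq 1-p$ for all $n$, i.e.\ iff $\PP(\varphi(n))\leq p$ for all $n$.

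For item (2), the condition $\PP(\exists n\,\varphi(n))\geq p$ is equivalent to $\PP(\forall n\,\psi(n))\leq 1-p$. By anti-monotone item (2), this holds iff for every $\lambda>0$ there is $n$ with $\PP(\psi(n))<1-p+\lambda$. That inequality is exactly $\PP(\varphi(n))>p-\lambda$.

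The only point needing any care is the strict inequality in item (2). The forward direction uses the strict margin $\lambda$ together with convergence of $\PP(\varphi(n))$ to the limit. The backward direction lets $\lambda\to 0$, so that strict inequalities for each $\lambda$ yield the non-strict bound $\leq p$ in the limit. Beyond this, everything is routine.
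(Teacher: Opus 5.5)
Your proof is correct and follows the intended argument. The paper gives no proof of its own (it cites Lemma 3.1 of \cite{NeriPowell2025a}), but it attributes the lemma to continuity of the probability measure. That is exactly your continuity-from-above argument in the anti-monotone case, with the monotone case reduced to it by passing to complements, which is legitimate since $1-p\in[0,1]$.
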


Lastly, to talk about intervals of natural numbers, we use the notation $[n;m]:=[n,m]\cap\mathbb{N}$ throughout.

\section{Finitary martingales and quantitative martingale convergence}
\label{sec:martingale}

In order to give a quantitative treatment to stochastic quasi-Fej\'er monotone sequences, we first need to develop a quantitative theory of martingales. This task will occupy us for this as well as the following section and culminates in an quantitative variant of the Robbins-Siegmund theorem on almost-supermartingales \cite{RobbinsSiegmund1971}. As such, our whole approach relies on, and extends, existing work on quantitative aspects of martingale convergence, as developed recently by the first and last author \cite{NeriPowell2025a,NeriPowell2026}, in two crucial ways:

First, supermartingales are replaced throughout with the weaker, finitary notion of being a $\lambda$-$\varepsilon$-$N$-supermartingale, a quantitative variant of the usual notion of a supermartingale which
\begin{enumerate}
\item concerns only finite fragments of the process with length $N\in\mathbb{N}$,
\item relaxes the descent condition by including an approximate error $\varepsilon>0$,
\item weakens the almost-sure to being false with low probability $\lambda>0$.
\end{enumerate} 
Switching to this variant requires a careful adaptation of several standard results on martingales to the finitary setting, along with a corresponding strengthening of the recent quantitative martingale convergence theorem \cite[Theorem 7.4]{NeriPowell2025a}, which did not feature such considerations as there the supermartingale property was not resolved.\footnote{In logical terms, it treated the associated theorems on supermartingales in their rule form.}

Second, we use this new quantitative martingale convergence theorem to provide an alternative quantitative variant of the Robbins-Siegmund theorem compared to that recently given in \cite{NeriPowell2026} by not only strengthening the result to finitary almost supermartingales, similar to above, but with convergence now formulated in terms of \emph{fluctuations}, a crucial adjustment for the delicate calculations around Fej\'er monotone sequences that follow.

The main notion that we henceforth work with is the following finitary weakening of a supermartingale:

\begin{definition}[Finitary supermartingale]
Let $\lambda,\varepsilon>0$ and let $(X_n)$ be a sequence of random variables adapted to $(\mathcal{F}_n)$ with $\EE[\vert X_n\vert]<+\infty$ for all $n\in\mathbb{N}$. The process $(X_n)$ is called a $\lambda$-$\varepsilon$-supermartingale if 
\[
\PP\left(\EE[X_{n+1}\mid \mathcal{F}_n]> X_n+\varepsilon\right)<\lambda
\]
for all $n\in\mathbb{N}$. It is called a $\lambda$-$\varepsilon$-$N$-supermartingale if this property is only required to hold for all $n<N$.
\end{definition}

Analogously, we call $(X_n)$ a $\lambda$-$\varepsilon$(-$N$)-submartingale if the displayed property is replaced by the requirement that
\[
\PP\left(\EE[X_{n+1}\mid \mathcal{F}_n]< X_n-\varepsilon\right)<\lambda
\]
for any $n\in\mathbb{N}$ (any $n<N$). Clearly, $(X_n)$ is a $\lambda$-$\varepsilon$(-$N$)-supermartingale if, and only if, $(-X_n)$ is a $\lambda$-$\varepsilon$(-$N$)-submartingale.

\begin{remark}[For logicians]
The above property of a $\lambda$-$\varepsilon$-$N$-supermartingale is a full finitization of the usual supermartingale property, following the works on the logical foundations of proof mining and probability \cite{NeriOlivaPischke2026,NeriPischke2024}, an aspect that crucially underlies the conception of this paper. Concretely, these logical results guarantee that there are suitable quantitative variants based on finitary supermartingales for large classes of theorems phrased for ordinary supermartingales. The following quantitative results on supermartingales all arose through such a logical analysis of their respective infinitary counterparts.
\end{remark}

The main result of the present section is a quantitative variant of the martingale convergence theorem phrased for $\lambda$-$\varepsilon$-$N$-supermartingales. For that, as mentioned above, we need to extend a range of standard results for ordinary supermartingales (an account of which can be found in any standard textbook in advanced probability, see e.g.\ \cite{Klenke2020}) to this more general setting of finitary supermartingales. 

The first such result is a quantitative version of Doob's decomposition theorem, which allows one to decompose a stochastic process $(X_n)$ into a sum $X_n=Y_n+Z_n$ such that $(Y_n)$ is a martingale and $(Z_n)$ a predictable process (i.e.\ $Z_n$ is $\mathcal{F}_{n-1}$ measurable for all $n\geq 1$), which is moreover nonincreasing whenever $(X_n)$ is a supermartingale. 

\begin{lemma}\label{lem:decomp}
Let $(X_n)$ be adapted to $(\mathcal{F}_n)$ and be such that $\EE[\vert X_n\vert]<+\infty$ for all $n\in\mathbb{N}$. Then $X_n=Y_n+Z_n$ for any $n\in\mathbb{N}$ where $(Y_n)$ is a martingale w.r.t.\ $(\mathcal{F}_n)$ and $(Z_n)$ is predictable w.r.t.\ $(\mathcal{F}_n)$. Furthermore, if $(X_n)$ is a $\lambda$-$\varepsilon$-$N$-supermartingale, then $(Z_n)$ is $\lambda$-$\varepsilon$-$N$-nonincreasing, i.e.
\[
\PP\left(Z_{n+1}> Z_n+\varepsilon\right)<\lambda
\]
for all $n<N$.
\end{lemma}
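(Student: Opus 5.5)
We will follow the classical construction of the Doob decomposition verbatim and observe that the finitary hypothesis passes through unchanged. Define the predictable part by the standard telescoping sum of conditional increments, namely $Z_0:=0$ and
\[
Z_n:=\sum_{k=0}^{n-1}\left(\EE[X_{k+1}\mid\mathcal{F}_k]-X_k\right)\quad\text{for }n\geq 1,
\]
and set $Y_n:=X_n-Z_n$. Then $X_n=Y_n+Z_n$ holds by construction.

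First we check the structural claims. Each summand $\EE[X_{k+1}\mid\mathcal{F}_k]-X_k$ with $k\leq n-1$ is $\mathcal{F}_{n-1}$-measurable and integrable, since $\EE[\vert X_k\vert]<+\infty$ and conditional expectation contracts the $L^1$-norm. Hence $Z_n$ is $\mathcal{F}_{n-1}$-measurable and integrable, so $(Z_n)$ is predictable. Consequently $Y_n$ is $\mathcal{F}_n$-measurable and integrable. For the martingale property we compute, almost surely,
\[
\EE[Y_{n+1}\mid\mathcal{F}_n]=\EE[X_{n+1}\mid\mathcal{F}_n]-Z_{n+1}=\EE[X_{n+1}\mid\mathcal{F}_n]-Z_n-\left(\EE[X_{n+1}\mid\mathcal{F}_n]-X_n\right)=X_n-Z_n=Y_n.
\]
Here we used that $Z_{n+1}$ is $\mathcal{F}_n$-measurable and can therefore be pulled out of the conditional expectation.

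For the finitary monotonicity claim, note that $Z_{n+1}-Z_n=\EE[X_{n+1}\mid\mathcal{F}_n]-X_n$ almost surely. The events $\{Z_{n+1}>Z_n+\varepsilon\}$ and $\{\EE[X_{n+1}\mid\mathcal{F}_n]>X_n+\varepsilon\}$ therefore differ only by a null set and have the same probability. By the $\lambda$-$\varepsilon$-$N$-supermartingale assumption this probability is $<\lambda$ for every $n<N$.

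There is no real obstacle here. The only point requiring care is that conditional expectations are defined only up to null sets. We fix a version of each $\EE[X_{k+1}\mid\mathcal{F}_k]$ once and use that same version both in defining $Z$ and in the hypothesis, so the equality of the two events above holds up to a null set and their probabilities coincide.
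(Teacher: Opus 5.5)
Your proposal is correct and matches the paper's proof: the same Doob construction $Z_n=\sum_{k=0}^{n-1}(\EE[X_{k+1}\mid\mathcal{F}_k]-X_k)$ (the paper indexes it as $\sum_{k=1}^n$), $Y_n:=X_n-Z_n$, and the observation that $Z_{n+1}-Z_n=\EE[X_{n+1}\mid\mathcal{F}_n]-X_n$. The paper only records the inclusion $\{Z_{n+1}>Z_n+\varepsilon\}\subseteq\{\EE[X_{n+1}\mid\mathcal{F}_n]>X_n+\varepsilon\}$, which is all that is needed; your equality of events, for a fixed version, is a slightly sharper form of the same step.
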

\begin{proof}
We proceed as usual in the proof of Doob's decomposition theorem and define $Z_0:=0$ as well as
\[
Z_n:=\sum_{k=1}^n(\EE[X_k\mid \mathcal{F}_{k-1}]-X_{k-1})
\]
for $n\geq 1$. Then each $Z_n$ is $\mathcal{F}_{n-1}$-measurable and $(Z_n)$ is hence predictable. We further define $Y_n:=X_n-Z_n$ and it follows as usual that $\EE[\vert Y_n\vert]<+\infty$ and that $(Y_n)$ is a martingale w.r.t.\ $(\mathcal{F}_n)$. If $(X_n)$ is now a $\lambda$-$\varepsilon$-$N$-supermartingale, i.e.\ we have
\[
\PP\left( \EE[X_{n+1}\mid \mathcal{F}_n]>X_n+\varepsilon\right)<\lambda
\]
for any $n<N$, then for any $\omega\in\Omega$ with $\EE[X_{n+1}\mid \mathcal{F}_n](\omega)\leq X_n(\omega)+\varepsilon$, we have that
\begin{align*}
Z_{n+1}(\omega)&=\sum_{k=1}^{n+1}(\EE[X_k\mid \mathcal{F}_{k-1}](\omega)-X_{k-1}(\omega))\\
&=Z_n(\omega)+(\EE[X_{n+1}\mid \mathcal{F}_{n}](\omega)-X_{n}(\omega))\\
&\leq Z_n(\omega)+\varepsilon.
\end{align*}
Thus we have shown that
\[
\PP\left(Z_{n+1}>Z_n+\varepsilon\right)\leq \PP\left( \EE[X_{n+1}\mid \mathcal{F}_n]>X_n+\varepsilon\right)<\lambda
\]
for any $n<N$, i.e.\ that $(Z_n)$ is $\lambda$-$\varepsilon$-$N$-nonincreasing.
\end{proof}

The next result we consider is Doob's optional stopping theorem, that is that for bounded stopping times $\rho\leq \tau$, we get $\EE[X_\tau\mid\mathcal{F}_\rho]\leq X_\rho$ for a supermartingle $(X_n)$ and the stopped filtration defined via
\[
\mathcal{F}_\rho:=\{A\in\mathcal{F}\mid A\cap \{\rho\leq t\}\in \mathcal{F}_t\text{ for all }t\in\mathbb{N}\}.
\]
Before we can give a quantitative version of this result, we need to develop a few further (quantitative) properties of random variables and martingales.

The first such result is quantitative version of the fact that the expected value of an ordinary supermartingale decreases. This property transfers to an approximate version for finitary supermartingales. However, to give such a quantitative result, we have to quantitatively resolve the property that each respective random variable is integrable in a more comprehensive way than simply providing an upper bound on the mean.

The enriched data we rely on here is a so-called modulus of absolute continuity:

\begin{definition}
\label{def:modulus:cont}
Let $X$ be an integrable random variable. We call a function $\mu:(0,\infty)\to (0,\infty)$ such that
\[
\forall \varepsilon>0\ \forall A\in \mathcal{F}\left( \PP(A)<\mu(\varepsilon)\to \EE[\vert X\vert 1_A]\leq\varepsilon\right)
\]
a modulus of absolute continuity for $X$.
\end{definition}

By e.g.\ Lemma 13.1 in \cite{Williams1991}, such a modulus always exists for integrable $X$. However, the quantitative information it provides complements that of a simple upper bound on the mean, with both together providing a pair of data fully characterising the integrability of a random variable quantitatively. Later, in Section \ref{sec:common}, we discuss concrete instantiations for such moduli in the context of applications to stochastic approximation methods.

\begin{remark}[For logicians]\label{rem:modAbsContDer} 
The above notion of a modulus of absolute continuity is motivated by the proof-theoretic methodology of proof mining. In particular, the underlying logical results (see again \cite{NeriPischke2024} and in particular also \cite{NeriOlivaPischke2026}) guarantee that such moduli can be extracted from large classes of proofs of the integrability of a random variable.
\end{remark}

Using this modulus, we can then immediately obtain the following quantitative descent property for finitary martingales.

\begin{lemma}\label{lem:quant:desc}
Let $\mu_n$ be a modulus of absolute continuity for $X_n$. Further, let $(X_n)$ be a $\mu^M_N(\varepsilon/4N)$-$\varepsilon/2N$-$N$-supermartingale, where $\mu^M_N(\varepsilon):=\min\{\mu_n(\varepsilon)\mid n\leq N\}$. Then 
\[
\EE[X_m]\leq\EE[X_n]+\varepsilon
\]
for all $n\leq m\leq N$.
\end{lemma}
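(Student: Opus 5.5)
The plan is to prove a one-step estimate $\EE[X_{k+1}]\leq \EE[X_k]+\varepsilon/N$ for every $k<N$ and then telescope. For $n\leq m\leq N$ we have $m-n\leq N$, so summing the one-step bounds over $k\in[n;m-1]$ gives $\EE[X_m]\leq\EE[X_n]+(m-n)\varepsilon/N\leq \EE[X_n]+\varepsilon$. The case $m=n$ is trivial, and we may assume $N\geq 1$.

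For the one-step estimate, fix $k<N$ and consider the exceptional event
\[
A_k:=\left\{\EE[X_{k+1}\mid\mathcal{F}_k]>X_k+\tfrac{\varepsilon}{2N}\right\}.
\]
This event is $\mathcal{F}_k$-measurable. By the finitary supermartingale assumption, $\PP(A_k)<\mu^M_N(\varepsilon/4N)$. Since $k,k+1\leq N$, this is at most both $\mu_k(\varepsilon/4N)$ and $\mu_{k+1}(\varepsilon/4N)$. The moduli of absolute continuity therefore yield
\[
\EE[\vert X_k\vert\mathbf{1}_{A_k}]\leq \tfrac{\varepsilon}{4N}\quad\text{and}\quad\EE[\vert X_{k+1}\vert\mathbf{1}_{A_k}]\leq \tfrac{\varepsilon}{4N}.
\]
This is exactly why the minimum over all $n\leq N$ is taken in the definition of $\mu^M_N$.

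Next we split the expectation. Because $A_k$ is $\mathcal{F}_k$-measurable, the tower property gives
\[
\EE[X_{k+1}]=\EE\big[\EE[X_{k+1}\mid\mathcal{F}_k]\mathbf{1}_{A_k^c}\big]+\EE[X_{k+1}\mathbf{1}_{A_k}].
\]
We bound the two terms separately.

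\emph{First term.} On $A_k^c$ the conditional expectation is at most $X_k+\varepsilon/2N$, so the first term is at most $\EE[X_k\mathbf{1}_{A_k^c}]+\varepsilon/2N$. We then write $\EE[X_k\mathbf{1}_{A_k^c}]=\EE[X_k]-\EE[X_k\mathbf{1}_{A_k}]$, and this is at most $\EE[X_k]+\EE[\vert X_k\vert\mathbf{1}_{A_k}]\leq \EE[X_k]+\varepsilon/4N$.

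\emph{Second term.} It is at most $\EE[\vert X_{k+1}\vert\mathbf{1}_{A_k}]\leq\varepsilon/4N$.

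Adding the two bounds gives $\EE[X_{k+1}]\leq\EE[X_k]+\varepsilon/2N+\varepsilon/4N+\varepsilon/4N=\EE[X_k]+\varepsilon/N$, which is the one-step estimate.

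The only delicate point is controlling the contribution of the bad set $A_k$, where no descent information is available. We cannot simply drop it, because $X_k$ and $X_{k+1}$ may be large and of either sign there. This is precisely what the moduli of absolute continuity handle: applied to both $X_k$ and $X_{k+1}$ on the small-probability event $A_k$, they bound this contribution. The rest is integrability bookkeeping, which is guaranteed by $\EE[\vert X_n\vert]<+\infty$ in the definition of a finitary supermartingale.
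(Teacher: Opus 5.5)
Your proposal is correct and follows essentially the same argument as the paper. Both isolate the $\mathcal{F}_k$-measurable bad event, use the moduli of absolute continuity of $X_k$ and $X_{k+1}$ to bound its contribution by $\varepsilon/4N$ each, use the conditional descent off that event to obtain a one-step increase of at most $\varepsilon/N$, and then telescope over at most $N$ steps.
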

\begin{proof}
As $(X_n)$ is a $\mu^M_N(\varepsilon/4N)$-$\varepsilon/2N$-$N$-supermartingale, we get
\[
\PP\left( \EE[X_{n+1}\mid \mathcal{F}_n]>X_n+\varepsilon/2N\right)<\mu^M_N(\varepsilon/4N)\leq \mu_{n+1}(\varepsilon/4N),\mu_n(\varepsilon/4N)
\]
for any $n<N$. Denote the set inside the probability above by $A_n$. Then we have
\begin{align*}
\EE[X_{n+1}]&=\EE[X_n]+\EE[(X_{n+1}-X_n)\mathbf{1}_{A_n}]+ \EE[(X_{n+1}-X_n)\mathbf{1}_{A_n^c}]\\
&\leq \EE[X_n]+\EE[|X_{n+1}|\mathbf{1}_{A_n}]+\EE[|X_n|\mathbf{1}_{A_n}]+\EE[(\EE[X_{n+1}\mid\mathcal{F}_n]-X_n)\mathbf{1}_{A_n^c}]\\
&\leq\EE[X_n]+\varepsilon/4N+\varepsilon/4N+\varepsilon/2N\\
&=\EE[X_n]+\varepsilon/N
\end{align*}
where the second line follows since $A_n^c\in\mathcal{F}_n$, and the third using $\PP(A_n)<\mu_{n+1}(\varepsilon/4N),\mu_n(\varepsilon/4N) $ and that $\EE[X_{n+1}\mid\mathcal{F}_n]-X_n\leq \varepsilon/2N$ holds on $A_n^c$. This in particular yields
\[
\EE[X_m]\leq \EE[X_n]+(m-n)\varepsilon/N\leq \EE[X_n]+\varepsilon
\]
for all $n\leq m\leq N$.
\end{proof}

Before moving on, we also require a quantitative version of the property that $\EE[X\mid \mathsf{F}_0]\leq 0$ if, and only if, $\EE[X\mathbf{1}_A]\leq 0$ for all $A\in \mathsf{F}_0$, whenever $X$ is an integrable random variable and $\mathsf{F}_0$ is a given sub-$\sigma$-algebra of $\mathsf{F}$.

\begin{lemma}\label{lem:quant:condChara}
Let $X$ be an integrable random variable and $\mathsf{F}_0$ be a given sub-$\sigma$-algebra of $\mathsf{F}$.
\begin{enumerate}
\item For any $\varepsilon,\lambda>0$, if $\EE[X\mathbf{1}_A]<\varepsilon\lambda$ for all $A\in\mathsf{F}_0$, then $\PP(\EE[X\mid\mathsf{F}_0]>\varepsilon)<\lambda$.
\item Let $\mu$ be a modulus of absolute continuity for $X$. For any $\varepsilon>0$, if $\PP(\EE[X\mid\mathsf{F}_0]>\varepsilon/2)<\mu(\varepsilon/2)$, then $\EE[X\mathbf{1}_A]<\varepsilon$ for all $A\in\mathsf{F}_0$.
\end{enumerate}
\end{lemma}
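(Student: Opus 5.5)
For part (1), take the event $A:=\{\EE[X\mid\mathsf{F}_0]>\varepsilon\}$, which lies in $\mathsf{F}_0$, and argue by contradiction. Suppose $\PP(A)\geq\lambda$. By the defining property of conditional expectation,
\[
\EE[X\mathbf{1}_A]=\EE[\EE[X\mid\mathsf{F}_0]\mathbf{1}_A].
\]
On $A$ the integrand exceeds $\varepsilon$, so $\EE[\EE[X\mid\mathsf{F}_0]\mathbf{1}_A]\geq\varepsilon\PP(A)\geq\varepsilon\lambda$. This contradicts the hypothesis $\EE[X\mathbf{1}_A]<\varepsilon\lambda$. One small point needs care. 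If $\PP(A)>0$, the pointwise strict inequality gives $\EE[X\mathbf{1}_A]>\varepsilon\PP(A)$. If $\PP(A)=0$, the conclusion $\PP(A)<\lambda$ holds trivially. So the non-strict bound $\geq\varepsilon\lambda$ already suffices for the contradiction.

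For part (2), fix $A\in\mathsf{F}_0$ and set $B:=\{\EE[X\mid\mathsf{F}_0]>\varepsilon/2\}\in\mathsf{F}_0$. Write
\[
\EE[X\mathbf{1}_A]=\EE[X\mathbf{1}_{A\cap B}]+\EE[\EE[X\mid\mathsf{F}_0]\mathbf{1}_{A\cap B^c}],
\]
using $A\cap B^c\in\mathsf{F}_0$ to replace $X$ by its conditional expectation on the second piece. Bound the two terms separately:
\begin{enumerate}
\item Since $\PP(A\cap B)\leq\PP(B)<\mu(\varepsilon/2)$, the modulus of absolute continuity gives $\EE[X\mathbf{1}_{A\cap B}]\leq\EE[\vert X\vert\mathbf{1}_{A\cap B}]\leq\varepsilon/2$.
\item On $B^c$ we have $\EE[X\mid\mathsf{F}_0]\leq\varepsilon/2$, so the second term is at most $(\varepsilon/2)\PP(A\cap B^c)\leq\varepsilon/2$.
\end{enumerate}
Together these give $\EE[X\mathbf{1}_A]\leq\varepsilon$.

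The main obstacle is that this argument yields only $\leq\varepsilon$, while the statement asks for the strict inequality $<\varepsilon$. To get strictness, I would use the strict hypothesis $\PP(B)<\mu(\varepsilon/2)$ together with the case analysis below.

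First suppose $\PP(A\cap B^c)<1$. Then the second term is strictly less than $\varepsilon/2$ whenever $\varepsilon/2\cdot\PP(A\cap B^c)<\varepsilon/2$. This gives strictness unless $\PP(A\cap B^c)=1$.

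If $\PP(A\cap B^c)=1$, then $\PP(A\cap B)=0$, so the first term vanishes. The total is then at most $\varepsilon/2<\varepsilon$.

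In the remaining configurations, the total is strictly below $\varepsilon$ because at least one of the two bounds is strict. I will check each configuration separately: either $\PP(A\cap B^c)<1$, so the second term is $<\varepsilon/2$, or $\PP(A\cap B^c)=1$, so the first term is $0$.
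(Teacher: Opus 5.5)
Your proof is correct and follows the paper's argument. Part (1) is the same contradiction via $\EE[\EE[X\mid\mathsf{F}_0]\mathbf{1}_A]\geq\varepsilon\PP(A)$, and part (2) uses the same split of $A$ along $B=\{\EE[X\mid\mathsf{F}_0]>\varepsilon/2\}$ with the same two bounds. Your extra case split, $\PP(A\cap B^c)<1$ versus $\PP(A\cap B^c)=1$, is exhaustive, so the closing ``remaining configurations'' paragraph is redundant; the split is what actually delivers the strict inequality $<\varepsilon$, since the paper's displayed chain ends only in $\leq\varepsilon/2+\varepsilon/2=\varepsilon$.
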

\begin{proof}
\begin{enumerate}
\item Suppose for a contradiction that $\PP(\EE[X\mid\mathsf{F}_0]>\varepsilon)\geq \lambda$, and write $A_0$ for the set inside the probability. Note that $A_0\in\mathsf{F}_0$. But then it holds that
\[
\varepsilon\lambda>\EE[X\mathbf{1}_{A_0}]=\EE[\EE[X\mid\mathsf{F}_0]\mathbf{1}_{A_0}]\geq\varepsilon\PP(A_0)\geq\varepsilon\lambda,
\]
which is a contradiction.
\item Assume $\PP(\EE[X\mid\mathsf{F}_0]>\varepsilon/2)<\mu(\varepsilon/2)$, and write $A_0$ for the set insides the probability. As before, we have $A_0\in\mathsf{F}_0$, and so given an arbitrary $A\in\mathsf{F}_0$, we have $A\cap A_0\in\mathsf{F}_0$. This yields
\begin{align*}
\EE[X\mathbf{1}_A]&=\EE[X\mathbf{1}_{A\cap A_0}]+\EE[X\mathbf{1}_{A\cap A_0^c}]\\
&\leq \EE[|X|\mathbf{1}_{A\cap A_0}]+\EE[\EE[X\mid\mathsf{F}_0]\mathbf{1}_{A\cap A_0^c}]\\
&\leq\varepsilon/2+\varepsilon/2=\varepsilon,
\end{align*}
where, in the second line we have used that $A\cap A_0^c\in\mathsf{F}_0$ and in the last line that $\PP(A\cap A_0)<\mu(\varepsilon/2)$.\qedhere
\end{enumerate}
\end{proof}

We now turn to stopping times. The first result we provide there is a quantitative version of the result that a boundedly stopped process is integrable. For simplicity, we restrict ourselves to nonnegative processes.

\begin{lemma}\label{lem:quant:stoppedProc}
Let $\tau$ be a bounded stopping time with $\tau\leq N$. Further, let $(X_n)$ be a nonnegative stochastic process, and let $\mu_n$ be a modulus of absolute continuity for each $X_n$. Then $\mu^M_N(\varepsilon/N):=\min\{\mu_{n}(\varepsilon/N)\mid n\leq N\}$ is a modulus of absolute continuity for $X_\tau$.
\end{lemma}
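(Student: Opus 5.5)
The plan is to reduce everything to the moduli $\mu_n$ of the individual $X_n$ by splitting $X_\tau$ along the finitely many values of $\tau$. Since $\tau\leq N$, we have pointwise $X_\tau=\sum_{n\leq N}X_n\mathbf{1}_{\{\tau=n\}}$. Each event $\{\tau=n\}$ lies in $\mathcal{F}_n\subseteq\mathsf{F}$, so $X_\tau$ is measurable. Because $X_\tau\geq 0$, we also have $\vert X_\tau\vert=X_\tau$, so no absolute values need handling.

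Fix $\varepsilon>0$ and $A\in\mathsf{F}$ with $\PP(A)<\mu^M_N(\varepsilon/N)$. By linearity,
\[
\EE[X_\tau\mathbf{1}_A]=\sum_{n\leq N}\EE[X_n\mathbf{1}_{A\cap\{\tau=n\}}].
\]
For each $n$, the set $A\cap\{\tau=n\}$ is measurable and satisfies $\PP(A\cap\{\tau=n\})\leq\PP(A)<\mu^M_N(\varepsilon/N)\leq\mu_n(\varepsilon/N)$. Definition \ref{def:modulus:cont} applied to $X_n$ then gives $\EE[X_n\mathbf{1}_{A\cap\{\tau=n\}}]\leq\varepsilon/N$. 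Summing these bounds gives the claimed estimate.

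The only delicate point, and the one I expect to be the real obstacle, is the bookkeeping of how many summands occur. If $\tau$ can take the $N$ values $1,\dots,N$, or if the convention for "bounded by $N$" makes exactly $N$ terms appear, the sum is at most $N\cdot\varepsilon/N=\varepsilon$ and we are done. If instead $\tau$ may take all $N+1$ values $0,\dots,N$, the naive sum gives only $(N+1)\varepsilon/N$. I do not see how nonnegativity or the disjointness of the events $\{\tau=n\}$ could recover the missing factor from the moduli alone.

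I would first check whether the paper's conventions already exclude one value of $\tau$, for instance by indexing so that stopping times relevant later satisfy $\tau\geq1$. Failing that, I would read the statement with $\varepsilon/(N+1)$ in place of $\varepsilon/N$. The argument is otherwise identical and costs nothing downstream, since the modulus is only ever used up to constants. So I would present the decomposition argument as above, with the index range chosen so that the number of terms matches the denominator.
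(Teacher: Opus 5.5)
Your argument is the paper's argument up to cosmetics. The paper bounds $X_\tau\leq\max_{n\leq N}X_n\leq\sum_{n\leq N}X_n$ and applies each $\mu_n$ to $A$ itself, while you split along the disjoint events $\{\tau=n\}$. Both give a sum of $N+1$ terms, each at most $\varepsilon/N$. Your worry about the count is justified and is not something you are overlooking. The paper's proof ends with $\sum_{n\leq N}\EE[X_n\mathbf{1}_A]\leq N(\varepsilon/N)$, which silently counts the $N+1$ indices $0,\dots,N$ as $N$.

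In fact nothing can close this gap, because the statement as written is false. Here is a counterexample:
\begin{itemize}
\item Take $N=1$, $\Omega=[0,1]$ with Lebesgue measure, $\mathcal{F}_0=\mathcal{F}_1=\mathsf{F}$, and disjoint $B_0,B_1$ of measure $1/4$ each.
\item Set $X_n:=\mathbf{1}_{B_n}$, with $\mu_n(\delta):=\delta$ for $\delta<1/4$ and $\mu_n(\delta):=1$ for $\delta\geq 1/4$. These are valid moduli, since $\EE[X_n\mathbf{1}_A]\leq\min\{\PP(A),1/4\}$.
\item Let $\tau:=0$ on $B_0$ and $\tau:=1$ elsewhere, so $X_\tau=\mathbf{1}_{B_0\cup B_1}$.
\item For $\varepsilon=1/4$, the set $A:=B_0\cup B_1$ has $\PP(A)=1/2<1=\mu^M_1(\varepsilon/1)$, yet $\EE[X_\tau\mathbf{1}_A]=1/2>\varepsilon$.
\end{itemize}
Separately, for $N=0$ the expression $\varepsilon/N$ is not even defined.

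Your first fallback is not available. The paper has no convention forcing $\tau\geq 1$, and it applies the lemma to stopping times that may vanish, e.g.\ the hitting time in the proof of Lemma \ref{lem:fin:Ville}.

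Your second fallback is the right repair. With $\min\{\mu_n(\varepsilon/(N+1))\mid n\leq N\}$, your proof is complete as written. Equivalently, $\PP(A)<\mu^M_N(\varepsilon/N)$ only gives $\EE[X_\tau\mathbf{1}_A]\leq\frac{N+1}{N}\varepsilon\leq 2\varepsilon$ for $N\geq 1$. So the later lemmas that invoke this one are affected only by a constant factor.
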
 
\begin{proof}
Note that $X_{\tau}\leq\max_{n\leq N}X_n$ almost surely, so that
\[
\EE[X_\tau\mathbf{1}_A]\leq \EE\left[\max_{n\leq N}(X_n\mathbf{1}_A)\right]\leq \sum_{n\leq N}\EE[X_n\mathbf{1}_A]\leq N(\varepsilon/N)=\varepsilon
\]
for $A$ such that $\PP(A)<\mu^M_N(\varepsilon/N)\leq \mu_{n}(\varepsilon/N)$ for all $n\leq N$.
\end{proof}

We are now in the position to provide a fully quantitative variant of Doob's optional stopping theorem, phrased for finitary martingales using moduli of absolute continuity. Again, we restrict ourselves to nonnegative processes.

\begin{lemma}\label{lem:quant:optional}
Let $\rho,\tau$ be bounded stopping times w.r.t.\ $(\mathcal{F}_n)$ with $\rho\leq\tau\leq N$. Further, let $(X_n)$ be a nonnegative stochastic process, and let $\mu_n$ be a modulus of absolute continuity for each $X_n$. If $(X_n)$ is a $\mu^M_N(\varepsilon\lambda/16N)$-$(\varepsilon\lambda/8N)$-$N$-supermartingale w.r.t.\ $(\mathcal{F}_n)$, then
\[
\PP\left( \EE[X_\tau\mid\mathcal{F}_\rho]> X_\rho+\varepsilon\right)<\lambda.
\]
Here, as before: $\mu^M_N(\varepsilon):=\min\{\mu_{n}(\varepsilon)\mid n\leq N\}$.
\end{lemma}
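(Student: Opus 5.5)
The plan is to reduce the claim to Lemma \ref{lem:quant:condChara}(1) with $X:=X_\tau-X_\rho$ and $\mathsf{F}_0:=\mathcal{F}_\rho$. Since $0\le X_\tau,X_\rho\le\max_{n\le N}X_n$, this $X$ is integrable, and $X_\rho$ is $\mathcal{F}_\rho$-measurable, so $\EE[X_\tau\mid\mathcal{F}_\rho]-X_\rho=\EE[X\mid\mathcal{F}_\rho]$. It therefore suffices to show
\[
\EE[(X_\tau-X_\rho)\mathbf{1}_A]<\varepsilon\lambda\quad\text{for all }A\in\mathcal{F}_\rho .
\]

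To get this we telescope as in the classical proof. Write
\[
X_\tau-X_\rho=\sum_{k=0}^{N-1}(X_{k+1}-X_k)\mathbf{1}_{\{\rho\le k<\tau\}}.
\]
For $A\in\mathcal{F}_\rho$ the set $B_k:=A\cap\{\rho\le k\}\cap\{\tau>k\}$ lies in $\mathcal{F}_k$, so
\[
\EE[(X_\tau-X_\rho)\mathbf{1}_A]=\sum_{k<N}\EE[(X_{k+1}-X_k)\mathbf{1}_{B_k}].
\]
Each summand is handled exactly as in the proof of Lemma \ref{lem:quant:desc}. Let $A_k:=\{\EE[X_{k+1}\mid\mathcal{F}_k]>X_k+\varepsilon\lambda/8N\}\in\mathcal{F}_k$, so that $\PP(A_k)<\mu^M_N(\varepsilon\lambda/16N)$. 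Split $B_k=(B_k\cap A_k)\cup(B_k\cap A_k^c)$.

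On $B_k\cap A_k$ we use nonnegativity and the moduli for $X_{k+1}$ and $X_k$:
\[
\EE[(X_{k+1}-X_k)\mathbf{1}_{B_k\cap A_k}]\le\EE[X_{k+1}\mathbf{1}_{A_k}]\le\varepsilon\lambda/16N .
\]
Only $X_{k+1}$ is needed here, since $-X_k\le 0$.

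On $B_k\cap A_k^c\in\mathcal{F}_k$ we condition on $\mathcal{F}_k$, which bounds that term by $\varepsilon\lambda/8N$.

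Summing over $k<N$ gives
\[
\EE[(X_\tau-X_\rho)\mathbf{1}_A]\le N\bigl(\varepsilon\lambda/16N+\varepsilon\lambda/8N\bigr)=3\varepsilon\lambda/16<\varepsilon\lambda .
\]
Lemma \ref{lem:quant:condChara}(1) then yields $\PP(\EE[X_\tau\mid\mathcal{F}_\rho]>X_\rho+\varepsilon)<\lambda$.

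Lemma \ref{lem:quant:stoppedProc} is needed only to make the integrability of $X_\tau,X_\rho$ explicit. The main point requiring care is the measurability bookkeeping: checking that $A\cap\{\rho\le k<\tau\}\in\mathcal{F}_k$ for $A\in\mathcal{F}_\rho$, and that $X_\rho$ is $\mathcal{F}_\rho$-measurable, so that the conditional expectation can be pulled apart. Both are the standard stopping-time facts. The constants have slack, since only a $3/16$ fraction of the allowed $\varepsilon\lambda$ is used.
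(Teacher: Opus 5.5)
Your proof is correct and follows the same route as the paper. You telescope $X_\tau-X_\rho$ along the events $\{\rho\le k<\tau\}$ and bound each increment on the sets $A\cap\{\rho\le k<\tau\}\in\mathcal{F}_k$ via the finitary supermartingale property plus absolute continuity; this effectively inlines Lemma \ref{lem:quant:condChara}(2), and nonnegativity means only the modulus of $X_{k+1}$ enters. You then conclude with Lemma \ref{lem:quant:condChara}(1), as the paper does.

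The paper additionally splits along an auxiliary event $A_0$ of approximate pathwise descent and controls its complement with Lemma \ref{lem:quant:stoppedProc}. Your argument shows this detour is unnecessary, since the telescoping identity holds pointwise. It also avoids the paper's bound on $\PP(A_0^c)$, which does not follow from a hypothesis stated in terms of conditional expectations.
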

\begin{proof}
Let $A_0$ denote the set of all $\omega$ such that $\rho(\omega)\leq\tau(\omega)\leq N$ as well as
\[
X_{n+1}(\omega)\leq X_n(\omega)+(\varepsilon\lambda/8N)
\]
for all $n<N$. Note that $\PP(A_0^c)< \mu^M_N(\varepsilon\lambda/16N)$ by assumption on $(X_n)$. Then, for any $\omega\in A_0$, we have
\begin{align*}
X_{\tau(\omega)}(\omega)&=X_{\rho(\omega)}(\omega)+\sum_{n=\rho(\omega)}^{\tau(\omega)-1}(X_{n+1}(\omega)-X_n(\omega))\\
&=X_{\rho(\omega)}(\omega)+\sum_{n=0}^{N-1}(X_{n+1}(\omega)-X_n(\omega))\mathbf{1}_{\rho\leq n<\tau}(\omega).
\end{align*}
Now, for $A\in\mathcal{F}_\rho$, we have by the above that
\[
\EE[X_\tau\mathbf{1}_{A\cap A_0}]\leq \EE[X_\rho\mathbf{1}_{A\cap A_0}]+\sum_{n=0}^{N-1}\EE[(X_{n+1}-X_n)\mathbf{1}_{\rho\leq n<\tau}\mathbf{1}_{A\cap A_0}].
\]
Note that $\{\rho\leq n<\tau\}\cap A\in \mathcal{F}_n$ as $A\in\mathcal{F}_\rho$. Therefore, since $\mu^M_N(\varepsilon/2)$ is a modulus of absolute continuity for $X_{n+1}-X_n$, and since
\[
\PP(\EE[X_{n+1}-X_n\mid\mathcal{F}_n]>\varepsilon\lambda/8N)<\mu^M_N(\varepsilon\lambda/16N),
\]
part (2) of Lemma \ref{lem:quant:condChara} yields that 
\[
\EE[(X_{n+1}-X_n)\mathbf{1}_{\rho\leq n<\tau}\mathbf{1}_{A}]<\varepsilon\lambda/4N
\]
for all $n<N$. In particular, we have
\begin{align*}
\EE[(X_{n+1}-X_n)\mathbf{1}_{\rho\leq n<\tau}\mathbf{1}_{A\cap A_0}]&=\EE[(X_{n+1}-X_n)\mathbf{1}_{\rho\leq n<\tau}\mathbf{1}_{A}]-\EE[(X_{n+1}-X_n)\mathbf{1}_{\rho\leq n<\tau}\mathbf{1}_{A\cap A_0^c}]\\
&\leq \EE[(X_{n+1}-X_n)\mathbf{1}_{\rho\leq n<\tau}\mathbf{1}_{A}]+\EE[X_n\mathbf{1}_{\rho\leq n<\tau}\mathbf{1}_{A\cap A_0^c}]<\varepsilon\lambda/2N.
\end{align*}
We have thus shown that $\EE[(X_\tau-X_\rho)\mathbf{1}_{A\cap A_0}]\leq \varepsilon\lambda/2$. By Lemma \ref{lem:quant:stoppedProc}, $\mu^M_N(\varepsilon/N)$ is a modulus of absolute continuity for $X_\tau$. Thereby, we have
\begin{align*}
\EE[(X_\tau-X_\rho)\mathbf{1}_A]&=\EE[(X_\tau-X_\rho)\mathbf{1}_{A\cap A_0}]+\EE[(X_\tau-X_\rho)\mathbf{1}_{A\cap A_0^c}]\leq \varepsilon\lambda/2+\varepsilon\lambda/16<\varepsilon\lambda
\end{align*}
as $\PP(A_0^c)<\mu^M_N(\varepsilon\lambda/16N)$. Part (1) of Lemma \ref{lem:quant:condChara} now yields that 
\[
\PP(\EE[X_\tau\mid\mathcal{F}_\rho]> X_\rho+\varepsilon)<\lambda
\]
as claimed.
\end{proof}

\begin{remark}\label{rem:optStopStrong}
If $(X_n)$ is already a finitary martingale in the stronger sense that the associated probability is not resolved, then the above Lemma \ref{lem:quant:optional} can be simplified in a way that the resulting quantitative information no longer depends on associated moduli of absolute continuity. Concretely, suppose that $(X_n)$ satisfies
\[
\EE[X_{n+1}\mid\mathcal{F}_n]\leq X_n+\frac{\varepsilon}{N}
\]
for all $n<N$, that is $(X_n)$ is a $\lambda$-$(\varepsilon/N)$-$N$-supermartingale w.r.t.\ $(\mathcal{F}_n)$ for any $\lambda>0$. Then
\[
\EE[X_\tau\mid\mathcal{F}_\rho]\leq X_\rho+\varepsilon.
\]
To see this, write $X_n=Y_n+Z_n$ with $(Y_n)$ and $(Z_n)$ as in Doob's decomposition theorem. Then, as $(X_n)$ is a $\lambda$-$(\varepsilon/N)$-$N$-supermartingale for any $\lambda$, we get that $(Z_n)$ is $\lambda$-$(\varepsilon/N)$-$N$-nonincreasing for any $\lambda>0$, by Lemma \ref{lem:decomp}. By Doob's optional stopping theorem for martingales, we get
\[
\EE[Y_\tau\mid\mathcal{F}_\rho]=Y_\rho
\]
as $(Y_n)$ is a martingale. Thus, we get
\[
\EE[X_\tau\mid\mathcal{F}_\rho]=Y_\rho+\EE[Z_\tau\mid\mathcal{F}_\rho]
\]
As $Z_{n+1}\leq Z_n+\varepsilon/N$ for any $n<N$, we have $Z_{\rho}+\varepsilon\geq Z_{\tau}$ and so in particular
\[
\EE[Z_\rho\mid\mathcal{F}_\rho]+\varepsilon\geq\EE[Z_\tau\mid\mathcal{F}_\rho].
\]
As $Z_\rho=\EE[Z_\rho\mid\mathcal{F}_\rho]$, one derives 
\[
\EE[X_\tau\mid\mathcal{F}_\rho]\leq X_\rho+\varepsilon
\]
which was the claim. 

Note that this type of strengthening, under stronger approximate martingale properties, can also be observed for all preceding results, and for various of the following results. However, contrary to the preceding results, the optional stopping theorem is the first result where this special case presented above is not a corollary of our approach, which is why we explicitly remark it here.
\end{remark}

For two processes $C:=(C_n)$ and $X:=(X_n)$, we define the discrete stochastic integral $C\bullet X$ as usual via
\begin{equation*}
(C\bullet X)_n := \sum_{i=1}^n C_i(X_i - X_{i-1}).
\end{equation*}
The following is a variant for finitary supermartingales of the fundamental result about the discrete stochastic integral (c.f.\ \cite[Theorem 9.39]{Klenke2020}) that if $C$ is a nonnegative and bounded predictable process and $X$ is a supermartingale, then $C\bullet X$ is also a supermartingale. 

\begin{lemma}\label{lem:quant:transform}
Let $C:= (C_n)$ be a bounded, nonnegative and predictable process and let $K>0$ be such that $C_n\leq K$ for any $n\in\mathbb{N}$. Further, let $X:=(X_n)$ be a $\lambda$-($\varepsilon/K$)-$N$-supermartingale. Then $C\bullet X$ is a $\lambda$-$\varepsilon$-$N$-supermartingale.
\end{lemma}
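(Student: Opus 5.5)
The plan is to verify the defining inequality of a $\lambda$-$\varepsilon$-$N$-supermartingale directly for $C\bullet X$, one index at a time, by comparing the relevant event with the corresponding event for $X$. Before that, two bookkeeping points need to be settled.

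First, adaptedness and integrability. The process $(C\bullet X)_n$ is $\mathcal{F}_n$-measurable, since $C_i$ is $\mathcal{F}_{i-1}$-measurable and $X_i,X_{i-1}$ are $\mathcal{F}_i$-measurable for $i\leq n$. It is integrable because
\[
\EE[\vert (C\bullet X)_n\vert]\leq K\sum_{i=1}^n(\EE[\vert X_i\vert]+\EE[\vert X_{i-1}\vert])<+\infty.
\]

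Second, the increment. For $n<N$ we have $(C\bullet X)_{n+1}-(C\bullet X)_n=C_{n+1}(X_{n+1}-X_n)$. Since $C_{n+1}$ is bounded and $\mathcal{F}_n$-measurable, the pull-out property of conditional expectation gives, almost surely,
\[
\EE[(C\bullet X)_{n+1}\mid\mathcal{F}_n]-(C\bullet X)_n=C_{n+1}\left(\EE[X_{n+1}\mid\mathcal{F}_n]-X_n\right).
\]

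With this identity, the main step is a pointwise comparison of events. Take $\omega$ with $\EE[X_{n+1}\mid\mathcal{F}_n](\omega)\leq X_n(\omega)+\varepsilon/K$ (outside the usual null set). Using $0\leq C_{n+1}(\omega)\leq K$, we get
\[
C_{n+1}(\omega)\left(\EE[X_{n+1}\mid\mathcal{F}_n](\omega)-X_n(\omega)\right)\leq C_{n+1}(\omega)\,\varepsilon/K\leq\varepsilon.
\]
To check this, split according to the sign of the bracket. If the bracket is nonpositive, the product is at most $0$, by nonnegativity of $C$. If the bracket is positive, the product is at most $K\cdot\varepsilon/K$.

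Consequently the event $\{\EE[(C\bullet X)_{n+1}\mid\mathcal{F}_n]>(C\bullet X)_n+\varepsilon\}$ is contained, up to a null set, in $\{\EE[X_{n+1}\mid\mathcal{F}_n]>X_n+\varepsilon/K\}$. The latter has probability $<\lambda$ for all $n<N$ by hypothesis, which gives the claim. This mirrors the argument in Lemma \ref{lem:decomp}.

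There is no real obstacle here. The only point requiring care is the null-set bookkeeping in the pull-out identity, since it holds only almost surely. That does not affect the probabilities, so it suffices to note it.
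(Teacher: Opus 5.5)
Your proof is correct and follows the paper's argument: the pull-out identity $\EE[(C\bullet X)_{n+1}\mid\mathcal{F}_n]-(C\bullet X)_n=C_{n+1}(\EE[X_{n+1}\mid\mathcal{F}_n]-X_n)$, the pointwise bound by $C_{n+1}\varepsilon/K\leq\varepsilon$ on the good event for $X$, and the resulting inclusion of events. Your added checks of adaptedness and integrability (left implicit in the paper) are fine, and the sign split is harmless but unnecessary, since $C_{n+1}\geq 0$ together with the bracket being at most $\varepsilon/K$ already gives the bound directly.
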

\begin{proof}
Write $Y_n:=(C\bullet X)_n$. Then we have by definition that
\[
\EE[Y_{n+1}-Y_n\mid\mathcal{F}_n]=C_{n+1}(\EE[X_{n+1}\mid \mathcal{F}_n]-X_n).
\]
Let now $\omega$ be such that $C_n(\omega)\leq K$ as well as $\EE[X_{n+1}\mid\mathcal{F}_n](\omega)\leq X_n(\omega)+\varepsilon/K$. Then we have
\[
\EE[Y_{n+1}-Y_n\mid\mathcal{F}_n](\omega)=C_{n+1}(\omega)(\EE[X_{n+1}\mid \mathcal{F}_n](\omega)-X_n(\omega))\leq C_{n+1}(\omega) \varepsilon/K\leq \varepsilon.
\]
Then we have
\[
\PP(\EE[Y_{n+1}-Y_n\mid\mathcal{F}_n]>\varepsilon)\leq \PP(C_{n+1}>K)+\PP(\EE[X_{n+1}\mid \mathcal{F}_n]-X_n>\varepsilon/K)<\lambda
\]
which completes the proof.
\end{proof}

In another result, we have that we can convert moduli of absolute continuity from elements of a nonnegative stochastic process to a discrete stochastic integral.

\begin{lemma}\label{lem:quant:transformMod}
Let $C:= (C_n)$ be a bounded, nonnegative and predictable process and let $K>0$ be such that $C_n\leq K$ for any $n\in\mathbb{N}$. Further, let $X:=(X_n)$ be a nonnegative stochastic process with moduli of absolute continuity $(\mu_n)$. Then $\mu^M_N(\varepsilon/KN):=\min\{\mu_{n}(\varepsilon/KN)\mid n\leq N\}$ is a modulus of absolute continuity for $(C\bullet X)_n$ for all $n\leq N$.
\end{lemma}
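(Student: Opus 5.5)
The plan is to bound $\EE[\vert (C\bullet X)_n\vert\mathbf{1}_A]$ directly for an arbitrary $A\in\mathsf{F}$ with $\PP(A)<\mu^M_N(\varepsilon/KN)$. Such an $A$ satisfies $\PP(A)<\mu_i(\varepsilon/KN)$ for every $i\leq N$. By Definition \ref{def:modulus:cont} this gives $\EE[X_i\mathbf{1}_B]\leq\varepsilon/KN$ for every $i\leq N$ and every measurable $B\subseteq A$. The whole proof is to express $\vert (C\bullet X)_n\vert$ pointwise as a sum of terms $K X_i\mathbf{1}_{B_i}$ with $B_i\subseteq A$, using as few such terms as possible. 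The case $n=0$ is trivial, since $(C\bullet X)_0=0$.

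Write $Y_n:=(C\bullet X)_n=P-Q$ with $P:=\sum_{i=1}^n C_iX_i$ and $Q:=\sum_{i=1}^nC_iX_{i-1}$. Both are nonnegative because $C$ and $X$ are.

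A crude bound $\vert Y_n\vert\leq P+Q$ gives $2n$ terms, which is far too many. Instead I split $A$ into $A_1:=A\cap\{Y_n\geq 0\}$ and $A_2:=A\cap\{Y_n<0\}$. On $A_1$ I use $\vert Y_n\vert\leq P\leq K\sum_{i=1}^nX_i$, and on $A_2$ I use $\vert Y_n\vert\leq Q\leq K\sum_{i=0}^{n-1}X_i$. Recombining the overlapping indices $1\leq i\leq n-1$ via $\mathbf{1}_{A_1}+\mathbf{1}_{A_2}=\mathbf{1}_A$, I get
\[
\EE[\vert Y_n\vert\mathbf{1}_A]\leq K\Big(\sum_{i=1}^{n-1}\EE[X_i\mathbf{1}_A]+\EE[X_n\mathbf{1}_{A_1}]+\EE[X_0\mathbf{1}_{A_2}]\Big)\leq K(n+1)\frac{\varepsilon}{KN}=\frac{(n+1)\varepsilon}{N}.
\]
This is at most $\varepsilon$ for every $n<N$, which settles the claim in that range.

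The main obstacle is the boundary index $n=N$, where the count above yields $(N+1)\varepsilon/N$. I would first try to save one term, for instance by exploiting the fact that on $A_1$ the term $C_1X_0$ can be subtracted. However, I expect no pointwise saving is available in general. Already for $N=n=1$ we have $\vert Y_1\vert=C_1\vert X_1-X_0\vert$, which can equal $K\max(X_0,X_1)$, and if $X_0,X_1$ concentrate on disjoint parts of $A$ each may carry mass $\varepsilon/K$ there. So at $n=N$ the exact constant needs care, and I would verify how the paper intends the range of $n$, namely $n<N$, or the stochastic integral summed only over indices up to $N$ with the moduli taken up to $N$. If the range $n\leq N$ is meant literally, then for $n=N$ I would fall back on noting that the same computation shows $\mu^M_{N+1}(\varepsilon/K(N+1))$ works, and that this is what the subsequent applications actually require.
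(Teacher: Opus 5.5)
Your argument is correct, and it is in fact more careful than the paper's. The paper's proof is a one-liner. It writes $\EE[(C\bullet X)_n\mathbf{1}_A]=\sum_{i=1}^n\EE[C_i(X_i-X_{i-1})\mathbf{1}_A]$ and discards the nonnegative terms $C_iX_{i-1}$. This gives $\EE[(C\bullet X)_n\mathbf{1}_A]\le K\sum_{i=1}^n\EE[X_i\mathbf{1}_A]\le n\varepsilon/N\le\varepsilon$ for all $n\le N$. That is only an upper bound on the signed quantity. It never controls $\EE[\vert (C\bullet X)_n\vert\mathbf{1}_A]$, which is what Definition \ref{def:modulus:cont} requires. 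Your split of $A$ by the sign of $Y_n$ is exactly what handles the absolute value, since the $Q$-side brings in $X_0$. It costs precisely the one extra term you found, giving $(n+1)\varepsilon/N$ and hence the claim for all $n<N$.

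Your suspicion about $n=N$ is also right: the statement as written is false there. A concrete instance of your sketch is as follows.
Take $\mathcal{F}_0=\mathcal{F}_1=\mathsf{F}$ and $C_1:=K$.
Take disjoint events $S_0,S_1$ with $\PP(S_0)=\PP(S_1)=s>0$, set $a:=\varepsilon/(Ks)$ and $X_i:=a\mathbf{1}_{S_i}$.
Define $\mu_i(\varepsilon'):=\varepsilon'/a$ for $\varepsilon'<\varepsilon/K$ and $\mu_i(\varepsilon'):=2$ otherwise; these are valid moduli.
Then $\mu^M_1(\varepsilon/K)=2$ admits $A=\Omega$, while $\EE[\vert Y_1\vert]=K\,\EE[X_0+X_1]=2\varepsilon$.
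The same construction with $N+1$ disjoint pieces, $\mathcal{F}_n=\mathsf{F}$, and $C$ chosen $\omega$-wise shows your bound $(N+1)\varepsilon/N$ is attained for every $N$.

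So the correct statement is that $\mu^M_N(\varepsilon/K(N+1))$ is a modulus for all $(C\bullet X)_n$ with $n\le N$. Your $\mu^M_{N+1}(\varepsilon/K(N+1))$ also works, but $\mu_{N+1}$ is never used, since only $X_0,\dots,X_n$ with $n\le N$ enter.

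The defect is harmless downstream. One option is to adjust the constants, e.g.\ $4N^2$ becomes $4N(N+1)$ in Lemma \ref{lem:fin:dcrs:ineq}. Alternatively, when Lemma \ref{lem:quant:desc} is applied to $C'\bullet X$, its proof only needs the one-sided increment bound $\EE[C'_{n+1}(X_{n+1}-X_n)\mathbf{1}_B]\le\EE[X_{n+1}\mathbf{1}_B]$. That bound is exactly the paper's trick of dropping a nonnegative term, applied to a single increment.
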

\begin{proof}
Given $n\leq N$, we have
\begin{align*}
\EE[(C\bullet X)_n\mathbf{1}_A]&=\sum_{i=1}^n\EE[C_i(X_i-X_{i-1})\mathbf{1}_A]\\
&\leq K\sum_{i=1}^n\EE[X_i\mathbf{1}_A]\\
&\leq K\sum_{i=1}^n\varepsilon/KN \leq \varepsilon
\end{align*}
whenever $A$ is such that $\PP(A)<\mu^M_N(\varepsilon/KN):=\min\{\mu_{n}(\varepsilon/KN)\mid n\leq N\}$.
\end{proof}

The final result we need before stating our quantitative martingale convergence theorem is a downcrossing inequality for finitary supermartingales. Crossing inequalities, typically involving \emph{upcrossings}, are the standard way of establishing martingale convergence. For our quantitative version, downcrossings offer a slightly more direct route in the case of supermartingales, as previously illustrated in the case of almost-supermartingales in \cite[Remark 7.11]{NeriPowell2025a}, which is in turn a variant of standard downcrossing inequalities as presented in e.g.\ \cite{Doob1961}. We first recall the notion of a downcrossing:

\begin{definition}
Given $N\in\NN$, $\alpha<\beta$ and a sequence of random variables $(X_n)$, we denote by $\dcr{N}{[\alpha,\beta]}{X_n}$ the number of times the process $(X_n)$ downcrosses the interval $[\alpha,\beta]$ before time $N$. More precisely, $\dcr{N}{[\alpha,\beta]}{X_n}(\omega)=k$ for the maximal $k\in\NN$ such that there exists a sequence $i_1<j_1\leq i_2<j_2\leq \ldots\leq i_k<j_k<N$ such that $\beta\leq X_{i_l}(\omega)$ and $X_{j_l}(\omega)\leq \alpha$ for all $l=1,\ldots,k$.
\end{definition}

Our quantitative downcrossing inequality, phrased for finitary supermartingales, then takes the following form:

\begin{lemma}\label{lem:fin:dcrs:ineq}
Let $\varepsilon >0$, $N \in \NN$ and $\alpha < \beta$ be given with $0<\delta\leq \beta-\alpha$. Let $(X_n)$ be a nonnegative stochastic process with moduli of absolute continuity $(\mu_n)$. If $(X_n)$ is a $\mu^M_N(\varepsilon\delta/4N^2)$-$(\varepsilon\delta/2N)$-$N$-supermartingale, then
\[
\EE[\dcr{N}{[\alpha,\beta]}{X_n}]\leq\frac{1}{\beta-\alpha}(\EE[X_0-X_N]+\EE[(X_N-\beta)^+])+\varepsilon.
\]
Here, as before: $\mu^M_N(\varepsilon):=\min\{\mu_n(\varepsilon)\mid n\leq N\}$. In case that $\EE[X_0]\leq K$, the inequality simplifies to
\[
\EE[\dcr{N}{[\alpha,\beta]}{X_n}]\leq\frac{1}{\beta-\alpha}(K+\EE[(X_N-\beta)^+])+\varepsilon.
\]
\end{lemma}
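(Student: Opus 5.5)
We follow the standard proof of the downcrossing inequality via a predictable ``betting'' process, and replace the exact supermartingale step by Lemmas \ref{lem:quant:transform} and \ref{lem:quant:desc}. The \emph{loss} of precision from the approximate supermartingale property is controlled through the moduli $\mu_n$.

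\textbf{Step 1: the betting process.} Define $C_1:=\mathbf{1}_{X_0\geq\beta}$ and, for $n\geq 1$,
\[
C_{n+1}:=\mathbf{1}_{C_n=1}\mathbf{1}_{X_n>\alpha}+\mathbf{1}_{C_n=0}\mathbf{1}_{X_n\geq\beta}.
\]
Then $C$ is predictable, takes values in $\{0,1\}$ (so $K=1$), and equals $1$ exactly during the downcrossing phases: it switches on once $X$ is at least $\beta$ and off once $X$ falls to at most $\alpha$.

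\textbf{Step 2: pathwise inequality.} Each completed downcrossing contributes at most $-(\beta-\alpha)$ to $(C\bullet X)_N$. A final incomplete phase starts at some $i$ with $X_i\geq\beta$ and contributes $X_N-X_i\geq -(X_N-\beta)^-$; it is bounded above by $(X_N-\beta)^+$ in the relevant direction. Carrying out this bookkeeping, the aim is
\[
(\beta-\alpha)\,\dcr{N}{[\alpha,\beta]}{X_n}\leq -(C\bullet X)_N+(X_N-\beta)^+ .
\]
If the signs in this bookkeeping are awkward, I would instead work with $((1-C)\bullet X)_N=X_N-X_0-(C\bullet X)_N$. The statement contains $\EE[X_0-X_N]$, which suggests the authors use the complementary process $1-C$. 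So the pathwise bound I would actually aim for is
\[
(\beta-\alpha)D\leq X_0-X_N+((1-C)\bullet X)_N+(X_N-\beta)^+ .
\]

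\textbf{Step 3: take expectations.} $1-C$ is bounded by $1$, nonnegative and predictable. By Lemma \ref{lem:quant:transform} with $K=1$, $Y:=(1-C)\bullet X$ is a $\mu^M_N(\varepsilon\delta/4N^2)$-$(\varepsilon\delta/2N)$-$N$-supermartingale. Since $Y$ is not nonnegative, Lemma \ref{lem:quant:transformMod} does not directly give its moduli. However, the proof of Lemma \ref{lem:quant:desc} only needs absolute continuity of the increments: on $A_n$ we have $\EE[|Y_{n+1}-Y_n|\mathbf{1}_{A_n}]\leq\EE[(X_{n+1}+X_n)\mathbf{1}_{A_n}]$. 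Because $\PP(A_n)<\mu^M_N(\varepsilon\delta/4N^2)$, this is at most $\varepsilon\delta/2N^2$.

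Summing over $n<N$, the step $\EE[Y_{n+1}]\leq \EE[Y_n]+\varepsilon\delta/2N^2+\varepsilon\delta/2N$ gives
\[
\EE[Y_N]\leq \EE[Y_0]+\varepsilon\delta/2N+\varepsilon\delta/2\leq\varepsilon\delta,
\]
using $Y_0=0$. Dividing by $\beta-\alpha\geq\delta$ then yields the claimed bound with error $\varepsilon$. The simplified form follows from $X_N\geq0$, which gives $\EE[X_0-X_N]\leq\EE[X_0]\leq K$.

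\textbf{Main obstacle.} I expect the main difficulty to be the pathwise inequality of Step 2, in particular getting the sign conventions for the final incomplete phase right so that exactly $\EE[X_0-X_N]$ and $(X_N-\beta)^+$ appear. A second, smaller point is checking that the constants $\varepsilon\delta/4N^2$ and $\varepsilon\delta/2N$ suffice in the expectation estimate of Step 3.
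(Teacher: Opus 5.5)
Your proposal is correct and is essentially the paper's proof. It uses the same predictable process $C$ and the same pathwise inequality $(\beta-\alpha)\dcr{N}{[\alpha,\beta]}{X_n}\leq -(C\bullet X)_N+(X_N-\beta)^+$, which the paper verifies by choosing the crossing indices greedily. For the final incomplete phase the only bound needed is $X_N-X_i\leq X_N-\beta\leq (X_N-\beta)^+$; your intermediate claim $X_N-X_i\geq -(X_N-\beta)^-$ is false in general, but you never use it. It then passes to $1-C$ via Lemma \ref{lem:quant:transform}, exactly as the paper does.

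The one local difference is how you get $\EE[((1-C)\bullet X)_N]\leq\varepsilon\delta$. You rerun the descent argument on the increments $|Y_{n+1}-Y_n|\leq X_n+X_{n+1}$ instead of citing Lemmas \ref{lem:quant:transformMod} and \ref{lem:quant:desc}. If anything this is cleaner, because the proof of Lemma \ref{lem:quant:transformMod} only bounds the signed integral $\EE[(C\bullet X)_n\mathbf{1}_A]$ rather than $\EE[|(C\bullet X)_n|\mathbf{1}_A]$.
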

\begin{proof}
Given $\alpha<\beta$, define the $\{0,1\}$-valued predictable process $C:=(C_n)$ by $C_0:=1$ iff $X_0\ge\beta$ and
\[
C_{n+1}:=1\text{ iff }\begin{cases}C_n=1\text{ and }X_n> \alpha,\text{ or}\\C_n=0\text{ and }X_n\ge\beta.\end{cases}
\]
Let $C':=(C'_n)$ be the dual predictable process defined by $C_n':= 1-C_n$. We have
\[
(\beta-\alpha)\dcr{N}{[\alpha,\beta]}{X_n}\leq -\sum_{i=1}^{N}C_i(X_i-X_{i-1}) + (X_N-\beta)^{+}\tag{$\dagger$}
\]
pointwise everywhere, the proof of which we defer to the end. For now assuming $(\dagger)$, since $C'$ is a nonnegative predictable process, bounded by $1$ and $(X_n)$ is a $\mu^M_N(\varepsilon\delta/4N^2)$-$(\varepsilon\delta/2N)$-$N$-supermartingale, we must have that $C'\bullet X$ is a $\mu^M_N(\varepsilon\delta/4N^2)$-$(\varepsilon\delta/2N)$-$N$-supermartingale by Lemma \ref{lem:quant:transform}, where further $(C'\bullet X)_0=0$. Lemma \ref{lem:quant:transformMod} yields that $\mu^M_N(\varepsilon/N)$ is a modulus of absolute continuity for $(C'\bullet X)_n$ for all $n\leq N$. Therefore, Lemma \ref{lem:quant:desc} yields
\[
\EE\left[\sum_{i=1}^{N} C_i'(X_i-X_{i-1})\right] = \EE[(C'\bullet X)_N]\le \EE[(C'\bullet X)_0] +\varepsilon\delta\leq\varepsilon(\beta-\alpha).
\]
This in turn yields
\begin{align*}
\EE\left[-\sum_{i=1}^{N} C_i(X_i-X_{i-1})\right]-\varepsilon(\beta-\alpha)&\le \EE\left[-\sum_{i=1}^{N} C_i(X_i-X_{i-1})\right]+\EE\left[-\sum_{i=1}^{N} C_i'(X_i-X_{i-1})\right]\\
&= \EE\left[\sum_{i=1}^{N} (C_i+C'_i)(X_{i-1}-X_i)\right]\\
& = \EE\left[\sum_{i=1}^{N} (X_{i-1}-X_i)\right]\\
&=\EE\left[X_0-X_N\right]
\end{align*}
and so it follows from $(\dagger)$ that
\[
(\beta-\alpha)\EE[\dcr{N}{[\alpha,\beta]}{X_n}]\leq \EE[X_0-X_N]+\EE[(X_N-\beta)^+] +\varepsilon(\beta-\alpha)
\]
and the result follows immediately.

It remains to show $(\dagger)$. To see this, fix $\omega\in \Omega$, set $k := \dcr{N}{[\alpha,\beta]}{X_n(\omega)}$, and take a sequence 
\[
i_1<j_1\leq i_2<j_2\leq\ldots \leq i_k<j_k< N \mbox{ with } \beta\leq X_{i_l}(\omega)  \mbox{ and }X_{j_l}(\omega)\leq \alpha, 
\]
where furthermore we assume (w.l.o.g.) that $i_l\geq j_{l-1}$ is the first such index with $\beta\leq X_{i_l}(\omega)$, and similarly $j_l>i_l$ the first with $X_{j_l}(\omega)\leq \alpha$. It is easy to see by induction that for $1 \le p\le j_k$, $C_p(\omega) = 1$ iff $i_l < p \le j_l$ for some $1 \le l \le k$. So, in particular $C_{j_k} (\omega)= 1$ and so $C_{j_k +1}(\omega) = 0$.  Now, if $X_p(\omega) < \beta$ for all $N\geq p>j_k$, then for all such $p$ we must have $C_p(\omega) = 0$ and so 
\begin{align*}
-\sum_{i=1}^{N}C_i(\omega)(X_i(\omega)-X_{i-1}(\omega)) &= -\sum_{l=1}^{k}\sum_{p=i_l +1}^{j_l}(X_p(\omega)-X_{p-1}(\omega))\\ 
&=  \sum_{l=1}^{k} X_{i_l}(\omega) - X_{j_l}(\omega) \ge (\beta-\alpha)\dcr{N}{[\alpha,\beta]}{X_n(\omega)} 
\end{align*}
and we obtain the desired inequality, noting that $(X_N(\omega) - \beta)^+ = 0$ in this case. On the other hand, if there is some $N\geq p> j_k$ satisfying $X_p(\omega) \ge \beta$, take the first such $p$. This would imply that $C_{q}(\omega) = 1$ for all $N\ge q>p$ (by maximality of $k$) and $C_q(\omega) = 0$ for $j_k<q\le p$. This yields 
\begin{align*}
-\sum_{i=1}^{N}C_i(\omega)(X_i(\omega)-X_{i-1}(\omega)) &= -\sum_{l=1}^{k}\sum_{r=i_l +1}^{j_l}(X_r(\omega)-X_{r-1}(\omega)) - \sum_{q=p+1}^N \left(X_q(\omega) - X_{q-1}(\omega)\right)\\
 &=  \sum_{l=1}^{k} \left(X_{i_l}(\omega) - X_{j_l}(\omega)\right) + X_p(\omega) - X_N(\omega).
\end{align*}
The above and the fact that $X_p(\omega) \ge \beta$ yields 
\[
-\sum_{i=1}^{N}C_i(\omega)(X_i(\omega)-X_{i-1}(\omega)) + X_N(\omega) - \beta \ge (\beta-\alpha)\dcr{N}{[\alpha,\beta]}{X_n(\omega)}
\]
and the desired inequality holds on $\Omega$.
\end{proof}

We now state and prove the main result of the section: a quantitative version of Doob's $L_1$-martingale convergence theorem for the special case of nonnegative supermartingales. While it is an immediate consequence of the martingale convergence theorem that any nonnegative supermartingale converges almost surely to some $X$ with $\EE[|X|]<+\infty$, in general, producing a computable rate of convergence is generally not possible (as discussed in the introduction). As such, and following \cite{NeriPowell2025a}, we instead provide a rate of metastability for the corresponding Cauchy property, in the simplified form of a learnable rate of uniform convergence (cf.\ \cite[Theorem 7.4]{NeriPowell2025a}). The main difference here is that our quantitative result is now given in terms of finitary supermartingales. For further background on concepts such as uniform learnability and their relationship to martingale convergence, the reader is directed to \cite{NeriPowell2025a}. For the purpose of this paper, the importance of this result is that it used in the next section to establish fluctuations bounds for nonnegative supermartingales (Theorem \ref{thm:martingalefluc}), which are in turn used to obtain fluctuation bounds for the Robbins-Siegmund theorem (Theorem \ref{thm:finRS}), which represents a crucial component of our overall analysis of the convergence of stochastic quasi-Fej\'er monotone sequences.

\begin{theorem}\label{thm:fin:learnable:MCT}
For any $\varepsilon,\lambda \in (0,1]$ and $K>0$, define
\[
N_{K}(\lambda,\varepsilon):=\left\lceil \frac{512(K+1)^2}{\lambda^2\varepsilon^2}\right\rceil.
\]
Let $(X_n)$ be a nonnegative stochastic process with moduli of absolute continuity $(\mu_n)$. Then, if $(X_n)$ is a $\mu^M_N(\varepsilon^2/32N^2_{K}(\lambda,\varepsilon))$-$(\varepsilon^2/16N_{K}(\lambda,\varepsilon))$-$N_{K}(\lambda,\varepsilon)$-supermartingale with $\EE[X_0]<K$, there exists an $i\leq N_{K}(\lambda,\varepsilon)$ such that
\[
\PP\left(\exists k,l\in [a_i;b_i] \left(|X_k-X_l|\geq\varepsilon\right)\right)<\lambda,
\]
uniformly over any $a_0<b_0\leq a_1<b_1\leq \ldots < N_{K}(\lambda,\varepsilon)$. Here, as before: $\mu^M_N(\varepsilon):=\min\{\mu_n(\varepsilon)\mid n\leq N\}$.
\end{theorem}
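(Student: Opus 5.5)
We argue by contradiction, following the learnable-uniform-convergence argument of \cite[Theorem 7.4]{NeriPowell2025a} and replacing each exact martingale fact by its finitary analogue from this section. Write $N:=N_K(\lambda,\varepsilon)$. Fix intervals $a_0<b_0\leq a_1<b_1\leq\ldots<N$; there are at most $N$ of them. Suppose that for every $i\leq N$ (with $[a_i;b_i]$ defined),
\[
\PP\left(\exists k,l\in[a_i;b_i]\,(|X_k-X_l|\geq\varepsilon)\right)\geq\lambda .
\]
We aim to show this forces too many downcrossings in expectation, or too much expected descent, compared with the bound from Lemma \ref{lem:fin:dcrs:ineq}.

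\emph{Step 1 (reduction to up- or down-moves).} A fluctuation $|X_k-X_l|\geq\varepsilon$ on $[a_i;b_i]$ means that, at the first time an $\varepsilon$-jump relative to $X_{a_i}$ occurs, the process has either risen or fallen by at least $\varepsilon/2$. So either the event of a rise occurs with probability at least $\lambda/2$, or the event of a fall does. A rise is bounded using the optional stopping Lemma \ref{lem:quant:optional} with $\rho=a_i$ and $\tau$ the first time in $[a_i;b_i]$ at which $X$ exceeds $X_{a_i}+\varepsilon/2$ (capped at $b_i$). By Lemma \ref{lem:quant:optional} and the given supermartingale parameters, $\EE[X_\tau\mid\mathcal F_{a_i}]\leq X_{a_i}+\eta$ up to probability $\lambda'$, for small $\eta,\lambda'$. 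A rise of size $\varepsilon/2$ on a set of probability $\geq\lambda/2$ must then be compensated by an expected drop of order $\lambda\varepsilon/4$ on that interval. Using Lemma \ref{lem:quant:desc} across consecutive intervals, which makes $\EE[X_n]$ essentially nonincreasing, at most about $4(K+1)/(\lambda\varepsilon)$ intervals can carry such a drop. Hence, if the number of intervals exceeds this, most of them carry mostly downward fluctuations.

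\emph{Step 2 (falls give downcrossings).} For falls I would discretise levels. Partition $[0,L]$, with $L\approx 4K/\lambda$ coming from Markov's inequality (so that $\PP(\max X>L)$ is small after using optional stopping for the maximum), into a grid of mesh $\varepsilon/4$. A fall of size $\varepsilon/2$ then crosses downward some grid interval $[\alpha,\beta]$ with $\beta-\alpha=\varepsilon/4$. Instead of summing over many levels, which would cost a factor of $L/\varepsilon$, I would use the single-pass trick from \cite{NeriPowell2025a}: apply Lemma \ref{lem:fin:dcrs:ineq} to the total downcrossing count summed over grid intervals. The inequality with $K$ gives $\sum_{\text{grid}}\EE[D]\lesssim (K+1)\cdot\#\text{grid}/\varepsilon$. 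On the other side, each of the $\gtrsim N$ intervals contributes at least $\lambda/2$ to the expected downcrossings, so the expected count is at least about $N\lambda/2$. With $N=\lceil 512(K+1)^2/\lambda^2\varepsilon^2\rceil$ this gives a contradiction; the square reflects the product of the $L/\varepsilon$ grid factor and the $1/(\lambda\varepsilon)$ factor. The hypothesis parameters, $\mu^M_N(\varepsilon^2/32N^2)$ and $\varepsilon^2/16N$, are chosen so that Lemma \ref{lem:fin:dcrs:ineq} applies with $\delta=\varepsilon/4$ and its additive error $\leq\varepsilon$, absorbed into the constant 512.

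\emph{Main obstacle.} The delicate part is Step 1, and in particular making the rise and fall cases interact with a single budget $K$ and a single family of finitary supermartingale errors. An alternative I would try first is to avoid the dichotomy altogether: for each $i$, use the downcrossing inequality on the stopped process $X_{n\wedge\tau_i}$ with $\tau_i$ the first fluctuation time, and control rises by $\EE[(X_N-\beta)^+]$. The main work is then a careful constant chase verifying that every appeal to Lemmas \ref{lem:quant:desc}, \ref{lem:quant:optional} and \ref{lem:fin:dcrs:ineq} is licensed by the stated supermartingale parameters.
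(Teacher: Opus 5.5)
Your Step~1 has a genuine gap. Lemma \ref{lem:quant:optional} only gives, up to small errors, $\EE[X_\tau]\leq\EE[X_{a_i}]$. Suppose $X_\tau\geq X_{a_i}+\varepsilon/2$ on a set $R_i$ with $\PP(R_i)\geq\lambda/2$. The compensating negative contribution then sits on $R_i^c$, but the net change in $\EE[X_n]$ across the interval can be zero, as for any martingale. So an interval carrying a rise consumes none of the budget $\EE[X_n]<K+1$, and Lemma \ref{lem:quant:desc} gives no bound on the number of such intervals.

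Nor does the compensation produce a fall you could pass to Step~2. An increment equal to $\varepsilon/2$ with probability $\lambda/2$ and to $-\frac{\lambda\varepsilon/4}{1-\lambda/2}$ otherwise has mean zero, and its downward part crosses no cell of width $\varepsilon/4$.

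Your fallback does not repair this. The term $\EE[(X_N-\beta)^+]$ in Lemma \ref{lem:fin:dcrs:ineq} only accounts for an unfinished upward excursion after the last downcrossing before $N$, not for rises inside each $[a_i;b_i]$. Applying the inequality to a separately stopped process for each $i$ gives a per-interval bound of order $(K+1)/\varepsilon$, which is useless: the argument only works if all intervals draw on one global crossing count over $[0;N]$.

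The missing observation is that no up/down dichotomy is needed, because up- and downcrossings of a fixed level interval alternate. Put $B_i:=\{X_{a_i}\leq 2(K+1)/\lambda\}$. On $A_i\cap B_i$ some $X_k$ with $k\in[a_i;b_i]$ lies at distance at least $\varepsilon/2$ from $X_{a_i}$. Hence one of the $p+2$ cells $[\alpha_j,\beta_j]$ is crossed, up or down, inside $[a_i;b_i]$. Here the cells have width at most $\varepsilon/4$, cover $[0,2(K+1)/\lambda]$, and include one extra cell on each side.

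Since the intervals $[a_i;b_i]$ do not overlap, for a fixed cell the number of $i$ with such a crossing is at most the number of up- plus downcrossings before $N$. That is at most $2\dcr{N}{[\alpha_j,\beta_j]}{X_n}+1$. Lemma \ref{lem:fin:dcrs:ineq}, with $\delta=\varepsilon/8\leq\beta_j-\alpha_j$, bounds each such expectation by $O((K+1)/\varepsilon)$. This gives $\sum_i\PP(A_i\cap B_i)\leq(p+2)(2\EE[D]+1)<256(K+1)^2/\lambda\varepsilon^2$.

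Pigeonholing over $N_K(\lambda,\varepsilon)+1$ indices then yields an $i$ with $\PP(A_i\cap B_i)<\lambda/2$. Markov's inequality applied to $X_{a_i}$ alone, using $\EE[X_{a_i}]<K+1$ from Lemma \ref{lem:quant:desc}, gives $\PP(B_i^c)<\lambda/2$. So no maximal inequality or extra optional stopping is needed.

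This is the paper's proof, and your Step~2 is essentially its downward half. Note also that the factor $p+2\approx L/\varepsilon$ from the number of levels is genuinely paid. It is what produces the $\lambda^{-2}\varepsilon^{-2}$ in $N_K(\lambda,\varepsilon)$, so there is no ``single-pass'' saving to be had.
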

\begin{proof}
Fix $\varepsilon,\lambda$ and $K$ and let $(X_n)$ be a nonnegative $\mu^M_N(\varepsilon^2/32N^2_{K}(\lambda,\varepsilon))$-$(\varepsilon^2/16N_{K}(\lambda,\varepsilon))$-$N_{K}(\lambda,\varepsilon)$-supermartingale with $\EE[X_0]<K$. First observe that, by (the proof of) Lemma \ref{lem:quant:desc}, we have
\[
\EE[X_n] \le \EE[X_0] +  \varepsilon/8<  K + 1 
\]
for each $n \le N_{K}(\lambda,\varepsilon)$, as $\varepsilon\leq 1$. Now define the events
\[
A_i:=\{\omega\mid \exists k,l\in [a_i;b_i](|X_k(\omega)-X_l(\omega)|\geq \varepsilon)\} \ \ \ \mbox{and} \ \ \ B_i:=\{\omega\mid X_{a_i}(\omega)\leq 2(K+1)/\lambda\}
\]
for $i=0,\ldots,N_{K}(\lambda,\varepsilon)$. Divide the interval $[0,2(K+1)/\lambda]$ into $p:=\lceil 8(K+1)/\lambda\varepsilon\rceil$ many equal subintervals which we label $[\alpha_j,\beta_j]$ for $j=1,\ldots,p$. Add another subinterval of the same width on either side of $[0,2(K+1)/\lambda]$, which we label $[\alpha_0,\beta_0]$ and $[\alpha_{p+1},\beta_{p+1}]$ respectively. We note that each subinterval has width $\leq\varepsilon/4$.

Now, if $\omega\in A_i\cap B_i$ there exist $k(\omega),l(\omega)\in [a_i;b_i]$ with $|X_{k(\omega)}(\omega)-X_{l(\omega)}(\omega)|\geq \varepsilon$. Then by the triangle inequality, either $|X_{a_i}(\omega)-X_{k(\omega)}(\omega)|\geq \varepsilon/2$ or $|X_{a_i}(\omega)-X_{l(\omega)}(\omega)|\geq \varepsilon/2$, and since also $X_{a_i}(\omega)\leq 2(K+1)/\lambda$, this means that one of the intervals $[\alpha_j,\beta_j]$ for $j=0,\ldots,p+1$ is upcrossed or downcrossed somewhere in $[a_i;b_i]$. Therefore, defining the event
\[
C_{i,j}:=\mbox{``$(X_n)$ crosses $[\alpha_j,\beta_j]$ somewhere in $[a_i;b_i]$''},
\]
we have shown that
\[
A_i\cap B_i\subseteq\bigcup_{j=0}^{p+1} C_{i,j}
\]
and this yields
\[
\sum_{i=0}^{N_{K}(\lambda,\varepsilon)} \PP(A_i\cap B_i)\leq \sum_{i=0}^{N_{K}(\lambda,\varepsilon)}\sum_{j=0}^{p+1}\PP(C_{i,j}).
\]
For any $j\in \{0,\dots,p+1\}$, we further have
\begin{align*}
\sum_{i=0}^{N_{K}(\lambda,\varepsilon)}\PP(C_{i,j})&= \sum_{i=0}^{N_{K}(\lambda,\varepsilon)}\EE\left[\mathbf{1}_{C_{i,j}}\right]=\EE\left[\sum_{i=0}^{N_{K}(\lambda,\varepsilon)} \mathbf{1}_{C_{i,j}}\right]\\
&\leq 2\EE[\dcr{N_{K}(\lambda,\varepsilon)}{[\alpha_j,\beta_j]}{X_n}]+1,
\end{align*}
where for the final inequality we note that the quantity $\sum_{i=0}^{N_{K}(\lambda,\varepsilon)} \mathbf{1}_{C_{i,j}}$ is bounded by the total number of times that $(X_n)$ crosses $[\alpha_j,\beta_j]$ within its initial segment of length $N_K(\lambda,\varepsilon)$, and since between every two downcrossings there exists exactly one upcrossing, the total number of crossings (being the sum of the number of downcrossings and upcrossings) is bounded by $2\dcr{N_{K}(\lambda,\varepsilon)}{[\alpha_j,\beta_j]}{X_n}+1$. Applying Lemma \ref{lem:fin:dcrs:ineq}, we see that
\[
\EE[\dcr{N_{K}(\lambda,\varepsilon)}{[\alpha_j,\beta_j]}{X_n}]\leq \frac{1}{\beta_j-\alpha_j}(K+\EE[(X_{N_{K}(\lambda,\varepsilon)}-\beta_j)^+])+1 
\]
since $(X_n)$ is an $\mu^M_N(\varepsilon^2/32N^2_{K}(\lambda,\varepsilon))$-$(\varepsilon^2/16N_{K}(\lambda,\varepsilon))$-$N_{K}(\lambda,\varepsilon)$-supermartingale by assumption and we can show that $\beta_j-\alpha_j\geq \varepsilon /8$ for any $j$. We have
\[
\frac{1}{\beta_j-\alpha_j}(K+\EE[(X_{N_{K}(\lambda,\varepsilon)}-\beta_j)^+])+1  \leq \frac{8(K+\EE[X_{N_{K}(\lambda,\varepsilon)}])}{\varepsilon}+1\leq \frac{8(2K+1)+1}{\varepsilon}
\]
using that $\beta_j\geq 0$ for all $j$ (and $\varepsilon\leq 1$). Picking a $j_0\in \{0,\dots,p+1\}$ where $\PP(C_{i,j_0})$ is maximal among all $\PP(C_{i,j})$, we now get
\[
\sum_{i=0}^{N_{K}(\lambda,\varepsilon)}\PP(A_i\cap B_i)\leq (p+2)\sum_{i=0}^{N_{K}(\lambda,\varepsilon)}\PP(C_{i,j_0})
\]
for this $j_0$ and so, combined with the above and noting $\varepsilon, \lambda \in (0,1]$, we get
\begin{align*}
\sum_{i=0}^{N_{K}(\lambda,\varepsilon)}\PP(A_i\cap B_i)&\leq (p+2)(2\EE[\dcr{N_{K}(\lambda,\varepsilon)}{[\alpha_{j_0},\beta_{j_0}]}{X_n}]+1)\\
&< \left(\frac{8(K+1)}{\lambda\varepsilon}+3\right)\left(\frac{16(2K+1)+2}{\varepsilon}+1\right)\\
&\leq \frac{256(K+1)^2}{\lambda\varepsilon^2}.
\end{align*}
Therefore there is some $i\in\{0,\ldots,N_{K}(\lambda,\varepsilon)\}$ such that
\[
\PP(A_i\cap B_i)< \frac{1}{N_{K}(\lambda,\varepsilon)+1} \frac{256(K+1)^2}{\lambda\varepsilon^2}\leq \frac{\lambda^2\varepsilon^2}{512(K+1)^2}\frac{256(K+1)^2}{\lambda\varepsilon^2}=\frac{\lambda}{2}.
\]
But then, using Boole's inequality, it follows that
\[
\PP(A_i)+\PP(B_i)-1\leq \PP(A_i\cap B_i)<\frac{\lambda}{2}
\]
and by Markov's inequality and the fact that $a_i< N_{K}(\lambda,\varepsilon)$, we have
\[
\PP(B_i)=1-\PP(B_i^c)=1-\PP\left(X_{a_i}>\frac{2(K+1)}{\lambda}\right)\geq 1-\frac{\EE[X_{a_i}]}{2(K+1)/\lambda}>1-\frac{\lambda}{2}
\]
so that $\PP(A_i)<\lambda$. 
\end{proof}

\section{A quantitative Robbins-Siegmund theorem}\label{sec:rs}

We now develop our theory of finitary martingales to give a quantitative result on almost-supermartingale convergence, more specifically a quantitative version of the classic Robbins-Siegmund theorem \cite{RobbinsSiegmund1971}, which forms a central part of our main analysis just as the (qualitative) Robbins-Siegmund theorem forms a crucial part of the study of stochastic quasi-Fej\'er monotone sequences (see in particular \cite{CombettesPesquet2015}). Reflecting the fundamentally pointwise nature of the convergence proofs for such sequences in (locally) compact spaces, such as in \cite{CombettesPesquet2015}, the main result of this paper is a so-called metastable rate of pointwise convergence for stochastic quasi-Fej\'er monotone sequences, as previously discussed in the introduction. As shown in \cite{NeriPowell2025a}, metastable rates of pointwise convergence take on a particularly simple form when we are in possession of suitable quantitative information on the fluctuation behaviour of the sequence, and though the complexity of our overall argument, notably the use of compactness, precludes our final rate having such a simple form so that we are only able to derive such a metastable rate, we focus on fluctuation behaviour to the greatest extent possible in order to simplify our analysis. For the Robbins-Siegmund theorem in particular, quantitative information on fluctuations in the ordinary sense is possible, and it is the goal of this section to characterise this information. While we broadly follow the approach of the recent quantitative analysis of the Robbins-Siegmund theorem in \cite{NeriPowell2026}, the main fluctuation bound presented in Theorem \ref{thm:finRS} crucially dispenses of the so-called uniformly learnable rates derived therein and replaces them by fluctuation bounds. In that way, the bounds presented in  Theorem \ref{thm:finRS} should actually be viewed as an extension to $L_1$-bounded $\lambda$-$\varepsilon$-$N$-almost-supermartingales of the much older work of Chashka \cite{Chashka1994} and Kachurovskii \cite{Kachurovskii1996} on fluctuations in martingales.

To begin, for the benefit of the reader, we recall the notion of fluctuations here:

\begin{definition}
Given $\varepsilon>0$, $N\in\NN$ and a sequence of random variables $(X_n)$, we denote by $J_{N,\varepsilon}(X_n)$ the number of $\varepsilon$-fluctuations experienced by $(X_n)$ before time $N$. More precisely, $J_{N,\varepsilon}(X_n)(\omega)=k$ for the maximal $k\in\NN$ such that there exists a sequence $i_1<j_1\leq i_2<j_2\leq \ldots\leq i_k<j_k<N$ such that $|X_{i_l}(\omega)-X_{i_j}(\omega)|\geq \varepsilon$ for all $l=1,\ldots,k$.
\end{definition}

We first collect the necessary minor results we need for our main task. The following lemma outlines some standard inequalities relating fluctuations and various (arithmetical) operations on stochastic processes:

\begin{lemma}\label{lem:fluc:ineq}
Let $(X_n)$, $(Y_n)$ be sequences of random variables. Then for any $a,b,c>0$, any $\varepsilon>0$ and any $n\in\NN$:
\begin{enumerate}[(i)]
\item $\PP\left(\max_{i\leq n}|X_i+Y_i|\geq a\right)\leq \PP\left(\max_{i\leq n}|X_i|\geq a/2\right)+\PP\left(\max_{i\leq n}|Y_i|\geq a/2\right)$,
\item $\PP\left(\max_{i\leq n}|X_iY_i|\geq a\right)\leq \PP\left(\max_{i\leq n}|X_i|\geq \sqrt{a}\right)+\PP\left(\max_{i\leq n}|Y_i|\geq \sqrt{a}\right)$,
\item $\PP\left(J_{n,\varepsilon}(X_i+Y_i)\geq a\right)\leq \PP\left(J_{n,\varepsilon/2}(X_i)\geq a/2\right)+\PP\left(J_{n,\varepsilon/2}(Y_i)\geq a/2\right)$,
\item $\PP\left(J_{n,\varepsilon}(X_iY_i)\geq a\right)\leq \PP\left(J_{n,\varepsilon/c}(X_i)\geq a/2\right)+\PP\left(J_{n,\varepsilon/b}(Y_i)\geq a/2\right)+\PP\left(\max_{i\leq n}|X_i|\geq b\right)+\PP\left(\max_{i\leq n}|Y_i|\geq c\right)$.
\end{enumerate}
\end{lemma}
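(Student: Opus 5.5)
Items (i)–(iii) are pointwise set inclusions followed by a union bound, and (iv) adds a localisation to a bounded event. For (i): if $\max_{i\leq n}|X_i+Y_i|\geq a$, then some $i$ has $|X_i|+|Y_i|\geq a$, so $|X_i|\geq a/2$ or $|Y_i|\geq a/2$. For (ii): $|X_iY_i|\geq a$ forces $|X_i|\geq\sqrt a$ or $|Y_i|\geq\sqrt a$, since otherwise the product is $<a$. In both cases the event on the left is contained in the union of the two events on the right, and Boole's inequality gives the claim.

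For (iii), fix $\omega$ with $J_{n,\varepsilon}(X_i+Y_i)(\omega)=k\geq a$ and a witnessing chain $i_1<j_1\leq\ldots\leq i_k<j_k<n$. For each $l$, the triangle inequality $|(X+Y)_{i_l}-(X+Y)_{j_l}|\leq|X_{i_l}-X_{j_l}|+|Y_{i_l}-Y_{j_l}|$ shows that pair $l$ is an $\varepsilon/2$-fluctuation of $X$ or of $Y$. Sort the $k$ pairs into two subchains accordingly. Each subchain is still of the interlaced form $i<j\leq i'<j'$, so it witnesses $J_{n,\varepsilon/2}(X_i)(\omega)\geq k_X$ and $J_{n,\varepsilon/2}(Y_i)(\omega)\geq k_Y$ with $k_X+k_Y=k\geq a$. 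Hence one of $k_X,k_Y$ is at least $a/2$. This gives the inclusion, and the union bound finishes (iii).

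For (iv), I would first intersect with $E:=\{\max_{i\leq n}|X_i|<b\}\cap\{\max_{i\leq n}|Y_i|<c\}$. The complement of $E$ contributes the two maximal terms. On $E$, take a chain of $k\geq a$ product fluctuations and use
\[
|X_iY_i-X_jY_j|\leq |X_i|\,|Y_i-Y_j|+|Y_j|\,|X_i-X_j|< b|Y_i-Y_j|+c|X_i-X_j|.
\]
From here, each product fluctuation is a fluctuation of $Y$ at level comparable to $\varepsilon/b$ or of $X$ at level comparable to $\varepsilon/c$. I would then repeat the sorting argument of (iii): split into two interlaced subchains and take whichever has length at least $a/2$.

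The main obstacle is the constant at this step. The displayed estimate only yields that $|Y_i-Y_j|\geq\varepsilon/(2b)$ or $|X_i-X_j|\geq\varepsilon/(2c)$. Indeed, with $|X|<b$ and $|Y|<c$, the product can move by nearly $c\,\Delta X+b\,\Delta Y$, which can reach $\varepsilon$ while $\Delta X<\varepsilon/c$ and $\Delta Y<\varepsilon/b$.

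I would therefore first try to sharpen the decomposition, but I do not expect this to succeed. The fallback is to prove (iv) with the thresholds $\varepsilon/(2c)$ and $\varepsilon/(2b)$, or equivalently with $b,c$ in the bounding events replaced by $b/2,c/2$. This is harmless wherever the lemma is later applied with freely chosen $b,c$. I would record explicitly that the stated thresholds $\varepsilon/c,\varepsilon/b$ should be read with this factor of $2$ absorbed.
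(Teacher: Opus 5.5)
Your arguments for (i)--(iii) are what the paper intends (it calls them immediate and writes out only (iv)). For (iv) your route is the paper's route: restrict to $\{\max_{i\leq n}|X_i|<b\}\cap\{\max_{i\leq n}|Y_i|<c\}$, split each product fluctuation by the triangle inequality, and sort the witnessing chain into two interlaced subchains.

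Your concern about the constant is justified, and you should assert it rather than hedge. Item (iv) as stated is false. The paper's proof makes exactly the step you declined to make: it passes from $|y_i||x_i-x_j|+|x_j||y_i-y_j|\geq\varepsilon$ to ``$|x_i-x_j|\geq\varepsilon/|y_i|$ or $|y_i-y_j|\geq\varepsilon/|x_j|$'', whereas only the version with $\varepsilon/2$ follows.

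Here is a counterexample. Take $b=c=1$, $a=1$, $\varepsilon=1/10$, $n=2$, and the deterministic sequences $X_0=Y_0=0.99$, $X_1=Y_1=X_2=Y_2=0.93$.
\begin{itemize}
\item Left-hand side: $X_0Y_0-X_1Y_1=0.1152\geq\varepsilon$, so $J_{2,\varepsilon}(X_iY_i)=1\geq a$ and the left-hand side of (iv) equals $1$.
\item Right-hand side: $|X_0-X_1|=|Y_0-Y_1|=0.06<\varepsilon/c=\varepsilon/b$, and all values lie below $1$, so every term on the right-hand side is $0$.
\end{itemize}
So no sharpening of the decomposition can succeed. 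Your version with thresholds $\varepsilon/(2c)$ and $\varepsilon/(2b)$ (equivalently, bounding events at $b/2$ and $c/2$) is the correct statement, and your argument proves it completely.

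Where the lemma is used, in the proof of Theorem \ref{thm:finRS}, the correction only changes numerical constants. The fluctuation level for $(U_n)$ becomes $\varepsilon/(4c)$ instead of $\varepsilon/(2c)$, so $2^{17}$ becomes $2^{19}$ in $Z_{f,h,K}$. The lower bounds on $a$ and the quantity $\varepsilon_0$ change by corresponding factors of $2$.
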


\begin{proof}
All bounds are rather immediate. The most involved is (iv), for which we provide a proof as an example: Write $x_i:=X_i(\omega)$ and $y_i:=Y_i(\omega)$ and assume that $\max_{i\leq n}|x_i|<b$ and $\max_{i\leq n}|y_i|<c$. If we have a fluctuation $|x_iy_i-x_jy_j|\geq \varepsilon$, then
\[
|y_i||x_i-x_j|+|x_j||y_i-y_j|\geq \varepsilon
\]
and thus either 
\[
|x_i-x_j|\geq \varepsilon/|y_i|>\varepsilon/c \ \ \ \mbox{or} \ \ \ |y_i-y_j|\geq \varepsilon/|x_j|>\varepsilon/b.
\]
In other words, any single $\varepsilon$-fluctuation of the sequence $(x_ky_k)$ corresponds to an $\varepsilon/c$ fluctuation of $(x_k)$ or an $\varepsilon/b$-fluctuation of $(y_k)$, and thus
\[
J_{n,\varepsilon}(x_ky_k)\leq J_{n,\varepsilon/c}(x_k)+J_{n,\varepsilon/b}(y_k).
\]
We have therefore shown that
\[
\left\{J_{n,\varepsilon}(X_iY_i)\geq a\cap \sup_{i\leq n}|X_i|<b\cap \sup_{i\leq n}|Y_i|<c\right\}\subseteq \left\{J_{n,\varepsilon/c}(X_i)\geq a/2\cup J_{n,\varepsilon/b}(Y_i)\geq a/2\right\}
\]
and the inequality follows.
\end{proof}

Beyond fluctuations, we will also require some further quantitative results on finitary martingales in the following, starting with a quantitative version of the standard fact that stopped supermartingales are also supermartingales (cf.\ \cite[Theorem 10.15]{Klenke2020}).

\begin{lemma}\label{lem:stoppingtime}
Let $\tau$ be a stopping time w.r.t.\ $(\mathcal{F}_n)$. If $(X_n)$ is a $\lambda$-$\varepsilon$-$N$-supermartingale w.r.t.\ $(\mathcal{F}_n)$, so is the stopped process $(X_{n\wedge\tau})$, where $n\wedge\tau:=\min\{n,\tau\}$.
\end{lemma}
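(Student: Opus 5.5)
The plan is to compute the conditional increment of the stopped process directly and to use the event $\{\tau>n\}\in\mathcal{F}_n$. First, $X_{n\wedge\tau}$ is $\mathcal{F}_n$-measurable. Indeed,
\[
X_{n\wedge\tau}=\sum_{k<n}X_k\mathbf{1}_{\{\tau=k\}}+X_n\mathbf{1}_{\{\tau\geq n\}},
\]
and every event appearing here lies in $\mathcal{F}_n$. It is also integrable, since $|X_{n\wedge\tau}|\leq\sum_{k\leq n}|X_k|$.

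Next, I will write the increment as
\[
X_{(n+1)\wedge\tau}-X_{n\wedge\tau}=\mathbf{1}_{\{\tau>n\}}(X_{n+1}-X_n).
\]
The indicator is $\mathcal{F}_n$-measurable and bounded, and $X_n$ is $\mathcal{F}_n$-measurable. Pulling these out of the conditional expectation gives, almost surely,
\[
\EE[X_{(n+1)\wedge\tau}\mid\mathcal{F}_n]-X_{n\wedge\tau}=\mathbf{1}_{\{\tau>n\}}\left(\EE[X_{n+1}\mid\mathcal{F}_n]-X_n\right).
\]

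Now fix $n<N$ and let $\omega$ lie outside the event $\{\EE[X_{n+1}\mid\mathcal{F}_n]>X_n+\varepsilon\}$ (and outside the null set on which the identity above fails). There are two cases.
\begin{itemize}
\item If $\tau(\omega)\leq n$, the right-hand side is $0\leq\varepsilon$.
\item Otherwise the right-hand side equals $\EE[X_{n+1}\mid\mathcal{F}_n](\omega)-X_n(\omega)\leq\varepsilon$.
\end{itemize}
Hence
\[
\PP\left(\EE[X_{(n+1)\wedge\tau}\mid\mathcal{F}_n]>X_{n\wedge\tau}+\varepsilon\right)\leq\PP\left(\EE[X_{n+1}\mid\mathcal{F}_n]>X_n+\varepsilon\right)<\lambda,
\]
which is exactly the claim.

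There is no real obstacle in this argument. The only point needing care is that the conditional-expectation identities hold only almost surely, so the set inclusion must be stated up to a null set. Since a null set does not affect the probabilities, the strict inequality $<\lambda$ is preserved.
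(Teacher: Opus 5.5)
Your proposal is correct and follows essentially the same route as the paper. Both arguments pull the $\mathcal{F}_n$-measurable indicator $\mathbf{1}_{\{\tau\geq n+1\}}$ out of the conditional expectation and bound the result by $X_{n\wedge\tau}+\varepsilon$ on the complement of $\{\EE[X_{n+1}\mid\mathcal{F}_n]>X_n+\varepsilon\}$, up to a null set; your increment form $\mathbf{1}_{\{\tau>n\}}(X_{n+1}-X_n)$ is just a compact rewriting of the paper's expansion $\sum_{i\leq n}X_i\mathbf{1}_{\{\tau=i\}}+X_{n+1}\mathbf{1}_{\{\tau\geq n+1\}}$. Your explicit checks of adaptedness and integrability of the stopped process are a welcome addition, as the paper leaves them implicit.
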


\begin{proof}
Fixing $n<N$, by basic properties of conditional expectations and stopping times, we have
\begin{align*}
\EE[X_{(n+1)\wedge \tau} \mid \mathcal{F}_n]&=\EE\left[\sum_{i=0}^n X_i\mathbf{1}_{\{\tau=i\}}\mid\mathcal{F}_n\right]+\EE\left[X_{n+1}\mathbf{1}_{\{\tau\geq n+1\}}\mid\mathcal{F}_n\right]\\
    &=\sum_{i=0}^n X_i\mathbf{1}_{\{\tau=i\}}+\mathbf{1}_{\{\tau\geq n+1\}}\EE\left[X_{n+1} \mid \mathcal{F}_n\right],
\end{align*}  
on some set $\Omega'$ of measure one. Now, let $\omega\in \Omega'$ be such that $\EE[X_{n+1}\mid\mathcal{F}_n](\omega)\leq X_n(\omega)+\varepsilon$. Then the above yields
\begin{align*} 
\EE[X_{(n+1)\wedge \tau} \mid \mathcal{F}_n](\omega)&\leq \sum_{i=0}^n X_i(\omega)\mathbf{1}_{\{\tau=i\}}(\omega)+\mathbf{1}_{\{\tau\geq n+1\}}(\omega)X_n(\omega) + \mathbf{1}_{\{\tau\geq n+1\}}(\omega)\varepsilon\\
    &=\sum_{i=0}^{n-1} X_i(\omega)\mathbf{1}_{\{\tau=i\}}(\omega)+\mathbf{1}_{\{\tau\geq n\}}(\omega)X_n(\omega) + \mathbf{1}_{\{\tau\geq n+1\}}(\omega)\varepsilon\\
    &\leq X_{n\wedge \tau}(\omega)+\varepsilon.
\end{align*}
In particular, we have shown that 
\[
\PP(\EE[X_{(n+1)\wedge \tau} \mid \mathcal{F}_n]> X_{n\wedge \tau}+\varepsilon)\leq \PP(\EE[X_{n+1}\mid\mathcal{F}_n]> X_n+\varepsilon)
\]
which yields the claim.
\end{proof}

The next results presents our finitary variant of a classic inequality of Ville \cite{Ville1939} (see also \cite{Metivier1982}).

\begin{lemma}\label{lem:fin:Ville}
Let $(X_n)$ be a nonnegative stochastic process with moduli of absolute continuity $(\mu_n)$. If $(X_n)$ is a $\mu^M_N(\varepsilon/8N^2)$-$\varepsilon/2N$-$N$-supermartingale, then for any $\alpha>0$ and any $k\leq N$, we have
\[
\PP\left( \exists t\in [0;k]\left( X_t\geq\alpha\right)\right)\leq \frac{\EE[X_0]}{\alpha}+\frac{\varepsilon}{\alpha}.
\]
Here, as before: $\mu^M_N(\varepsilon):=\min\{\mu_n(\varepsilon)\mid n\leq N\}$.
\end{lemma}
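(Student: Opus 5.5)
The plan is to follow the standard proof of Ville's maximal inequality via a stopping time, replacing the exact optional stopping or descent step with the quantitative descent result Lemma \ref{lem:quant:desc} applied to a stopped process.

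Fix $\alpha>0$ and $k\leq N$, and define the stopping time
\[
\tau:=\min\{t\leq k\mid X_t\geq\alpha\},
\]
with $\tau:=k$ if no such $t$ exists. This is a stopping time bounded by $k\leq N$. On the event $A:=\{\exists t\in[0;k](X_t\geq\alpha)\}$ we have $X_{\tau}\geq\alpha$. Since $X_\tau\geq 0$ everywhere, Markov's inequality gives
\[
\alpha\PP(A)\leq\EE[X_\tau\mathbf{1}_A]\leq\EE[X_\tau]=\EE[X_{k\wedge\tau}].
\]
So it suffices to show $\EE[X_{k\wedge\tau}]\leq\EE[X_0]+\varepsilon$.

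By Lemma \ref{lem:stoppingtime}, the stopped process $(X_{n\wedge\tau})$ is again a $\mu^M_N(\varepsilon/8N^2)$-$\varepsilon/2N$-$N$-supermartingale. It is nonnegative, and it starts at $X_{0\wedge\tau}=X_0$.

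To apply Lemma \ref{lem:quant:desc} to it, I need moduli of absolute continuity for each $X_{n\wedge\tau}$, $n\leq N$. By Lemma \ref{lem:quant:stoppedProc}, $\mu^M_N(\delta/N)$ is such a modulus for every stopped variable; alternatively, the cruder estimate $X_{n\wedge\tau}\leq\sum_{m\leq N}X_m$ gives the same conclusion. Lemma \ref{lem:quant:desc} then requires the process to be a $\nu(\varepsilon/4N)$-$\varepsilon/2N$-$N$-supermartingale, where $\nu(\delta)=\mu^M_N(\delta/N)$. Here $\nu(\varepsilon/4N)=\mu^M_N(\varepsilon/4N^2)$, and this is at least the given $\mu^M_N(\varepsilon/8N^2)$ provided the moduli are taken monotone. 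That is harmless, since a modulus may always be replaced by a smaller value, so having probability below the smaller threshold suffices. Lemma \ref{lem:quant:desc} then yields $\EE[X_{k\wedge\tau}]\leq\EE[X_0]+\varepsilon$, and dividing by $\alpha$ gives the claim.

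The main obstacle is the constant bookkeeping in this last step. One must check that the stopped process inherits suitable moduli, via Lemma \ref{lem:quant:stoppedProc} at level $\mu^M_N(\cdot/N)$, and that the stated threshold $\mu^M_N(\varepsilon/8N^2)$ is at most what Lemma \ref{lem:quant:desc} demands. If monotonicity of the $\mu_n$ is not available, I would instead rerun the one-line argument of Lemma \ref{lem:quant:desc} directly for the stopped process. There I would bound $\EE[X_{(n+1)\wedge\tau}\mathbf{1}_{A_n}]$ and $\EE[X_{n\wedge\tau}\mathbf{1}_{A_n}]$ by $\sum_{m\leq N}\EE[X_m\mathbf{1}_{A_n}]\leq (N+1)\varepsilon/8N^2$, using $\PP(A_n)<\mu^M_N(\varepsilon/8N^2)$. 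This gives a per-step error of at most about $\varepsilon/2N+\varepsilon/2N$, and hence a total of at most $\varepsilon$ up to the minor slack from $N+1$ versus $N$, which the factor $8$ absorbs.
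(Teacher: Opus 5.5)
Your proof is correct and follows essentially the same route as the paper. Both stop at the first time $X_t\geq\alpha$, use Lemma \ref{lem:stoppingtime} to keep the finitary supermartingale property, take moduli for the stopped variables from Lemma \ref{lem:quant:stoppedProc}, apply Lemma \ref{lem:quant:desc}, and finish with Markov's inequality.

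The threshold mismatch you worry about disappears without any monotonicity. Just use $\delta\mapsto\mu^M_N(\delta/2N)$ as the modulus for each $X_{n\wedge\tau}$; this is valid since $(N+1)\delta/2N\leq\delta$ for $N\geq 1$. It yields exactly the threshold $\mu^M_N(\varepsilon/8N^2)$ in Lemma \ref{lem:quant:desc}, which is how the paper's constant arises. Capping $\tau$ at $k$, as you do, is also slightly cleaner than the paper's $\tau=\inf\{t\leq N\mid X_t\geq\alpha\}$.
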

\begin{proof}
We define the stopping time $\tau=\inf\{t\leq N\mid X_t\geq\alpha\}$. By Lemma \ref{lem:quant:stoppedProc}, we have that $\mu^M_N(\varepsilon/N)$ is a modulus of absolute continuity for $X_\tau$. In particular, as $X_{k\wedge \tau}\leq X_k+X_\tau$, we therefore have that $\mu^M_N(\varepsilon/2N)$ is a modulus of absolute continuity for $X_{k\wedge \tau}$. By Lemma \ref{lem:stoppingtime}, $X_{k\wedge\tau}$ is a $\mu^M_N(\varepsilon/8N^2)$-$\varepsilon/2N$-$N$-supermartingale. Lemma \ref{lem:quant:desc} thereby yields that 
\[
\EE[X_{k\wedge\tau}]\leq \EE[X_0]+\varepsilon
\]
as $k\wedge\tau\leq k$. Therefore, since if $\tau(\omega)\leq k$, then $X_{k\wedge\tau(\omega)}(\omega)=X_{\tau(\omega)}(\omega)\geq\alpha$, it follows that
\begin{align*}
\PP\left( \exists t\in [0;k]\left( X_t\geq\alpha\right)\right)&=\PP(\tau\leq k)\\
&\leq \PP(X_{k\wedge\tau} \geq\alpha)\\
&\leq\frac{\EE[X_{k\wedge\tau}]}{\alpha}\\
&\leq\frac{\EE[X_0]}{\alpha}+\frac{\varepsilon}{\alpha},
\end{align*}
where we used Markov's inequality for $X_{k\wedge\tau}$.
\end{proof}

The first main result of this section is to provide a so-called modulus of finite fluctuations (following the terminology of \cite[Definition 4.5]{NeriPowell2025a}) for nonnegative supermartingales, now adapted to the finitary setting. It can be shown (cf.\ Section 4.2 of \cite{NeriPowell2025a}) that if $(X_n)$ converges, then for any $\varepsilon>0$ we have
\[
\lim_{k\to \infty}\PP\left(\lim_{N\to \infty}J_{N,\varepsilon}(X_n)\geq k\right)=0.
\]
A modulus of finite fluctuations in the ordinary sense is defined to be (essentially) a rate of convergence for the above property parametrised by $\varepsilon>0$, namely a function $\phi:(0,\infty)^2\to \RR$ satisfying
\[
\PP\left(J_{N,\varepsilon}(X_n)\geq \phi(\lambda,\varepsilon)\right)<\lambda
\]
for all $\lambda,\varepsilon>0$ and $N\in\NN$. Such convergence rates provide interesting quantitative information on the oscillatory characteristics of processes, and have been widely studied in the context of both probability theory and ergodic theory (see in particular \cite{Kachurovskii1996}). Indeed, explicit moduli of finite fluctuations for $L_p$-bounded martingales are already provided in \cite{Chashka1994} and later in \cite{JonesKaufmanRosenblattWierdl1998}. In the following result we generalise these to the case of finitary supermartingales, combining elements of the proof of \cite[Theorem 1]{Chashka1994} with our quantitative result on finitary supermartingale convergence (Theorem \ref{thm:fin:learnable:MCT}). We note that an alternative approach here would have been to instead attempt to adapt the entire set of proofs in \cite{Chashka1994} or \cite{JonesKaufmanRosenblattWierdl1998} to manufacture finitary versions, though it is not obvious how these can be extended to supermartingales rather than just martingales, and in any case we conjecture that the improvement in the approximation bound, if any, would be minor.

\begin{theorem}\label{thm:martingalefluc}
Fix $\lambda,\varepsilon>0$ and $N\in\mathbb{N}$. Let $(X_n)$ be a nonnegative stochastic process with moduli of absolute continuity $(\mu_n)$. Let $(X_n)$ be a $\mu^M_N(\varepsilon_0\lambda_0/16N)$-$(\varepsilon_0\lambda_0/8N)$-$N$-supermartingale w.r.t.\ $(\mathcal{F}_n)$, where $\lambda_0:=\min\{\mu^M_e(\varepsilon^2/128e^2)\mid e\leq N\}$ and $\varepsilon_0:=\varepsilon^2/64N$. Let $K>0$ be such that $\EE[X_0]<K$. Then, for any $\lambda>0$, we have
\[
\PP\left(\fluc{N}{\varepsilon}{X_n}\geq \Bigl\lceil \frac{2048(K+1)^2}{\lambda^2\varepsilon^2}\Bigr\rceil \right)<\lambda.
\]
Here, as before, we use the notation $\mu^M_N(\varepsilon):=\min\{\mu_{n}(\varepsilon)\mid n\leq N\}$. 
\end{theorem}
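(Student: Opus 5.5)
We follow Chashka's stopping-time argument and reduce to Theorem \ref{thm:fin:learnable:MCT} applied to a sampled process. Put $M:=\lceil 2048(K+1)^2/\lambda^2\varepsilon^2\rceil=N_K(\lambda,\varepsilon/2)$. Since $\fluc{N}{\varepsilon}{X_n}<N$ always, we may assume $M\leq N$; otherwise the probability is $0$. Define stopping times $\tau_0:=0$ and
\[
\tau_{k+1}:=\min\bigl(\{n\in(\tau_k;N]\mid |X_n-X_{\tau_k}|\geq\varepsilon/2\}\cup\{N\}\bigr),
\]
so that $\tau_0\leq\tau_1\leq\ldots\leq N$. Set $Y_k:=X_{\tau_k}$ and $\mathcal{G}_k:=\mathcal{F}_{\tau_k}$. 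Then $(Y_k)$ is adapted to $(\mathcal{G}_k)$ and nonnegative, and $Y_0=X_0$, so $\EE[Y_0]<K$.

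The first step is combinatorial. Let $i<j<N$ with $|X_i-X_j|\geq\varepsilon$. Then some $\tau_k$ lies in $(i;j]$ with $\tau_k$ a genuine stopping time, i.e.\ with $|X_{\tau_k}-X_{\tau_{k-1}}|\geq\varepsilon/2$. Indeed, otherwise $i$ and $j$ would lie in the same block $[\tau_k;\tau_{k+1})$, and both $X_i$ and $X_j$ would be within $\varepsilon/2$ of $X_{\tau_k}$, which contradicts $|X_i-X_j|\geq\varepsilon$. The fluctuation intervals $(i_l;j_l]$ are disjoint, so $\fluc{N}{\varepsilon}{X_n}\geq M$ forces $M$ genuine stopping times. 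Hence
\[
\{\fluc{N}{\varepsilon}{X_n}\geq M\}\subseteq\{|Y_{i+1}-Y_i|\geq\varepsilon/2\text{ for all }i<M\}.
\]

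The second step shows that $(Y_k)$ is a finitary supermartingale, for which we apply Lemma \ref{lem:quant:optional} to each pair $\tau_k\leq\tau_{k+1}\leq N$. With the given hypothesis this yields
\[
\PP(\EE[Y_{k+1}\mid\mathcal{G}_k]>Y_k+\varepsilon_0)<\lambda_0 .
\]
Since $M\leq N$, we have $\varepsilon_0=\varepsilon^2/64N\leq(\varepsilon/2)^2/16M$, which is the error Theorem \ref{thm:fin:learnable:MCT} requires for accuracy $\varepsilon/2$. Moduli of absolute continuity for the $Y_k$ come from Lemma \ref{lem:quant:stoppedProc}, namely $\mu^M_N(\cdot/N)$. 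The probability threshold Theorem \ref{thm:fin:learnable:MCT} requires is $\mu^{M}$ of these $Y$-moduli at $(\varepsilon/2)^2/32M^2=\varepsilon^2/128M^2$. The definition $\lambda_0=\min_{e\leq N}\mu^M_e(\varepsilon^2/128e^2)$ is designed to dominate this, via the choice $e=M$ with the appropriate rescaling of the moduli. This bookkeeping, and checking that the constants in the statement genuinely align (including the $\lambda$ in $\lambda_0$ being the small parameter $\lambda_0$ rather than the target $\lambda$), is where I expect the main friction. If the constants do not line up exactly, I would first try replacing $\mu$ by the stopped-process moduli in the definition of $\lambda_0$.

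The third step applies Theorem \ref{thm:fin:learnable:MCT} to $(Y_k)$ with accuracy $\varepsilon/2$, confidence $\lambda$, and the intervals $a_i:=i$, $b_i:=i+1$ for $i<M$. This gives some $i<M$ with
\[
\PP(|Y_{i+1}-Y_i|\geq\varepsilon/2)<\lambda .
\]
Combining this with the inclusion from the first step yields $\PP(\fluc{N}{\varepsilon}{X_n}\geq M)<\lambda$, as required.
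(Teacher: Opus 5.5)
Your argument is the paper's own route: the same $\varepsilon/2$-stopping times, Lemma \ref{lem:quant:optional} to make $(X_{\tau_k})$ a $\lambda_0$-$\varepsilon_0$-supermartingale w.r.t.\ $(\mathcal{F}_{\tau_k})$, and Theorem \ref{thm:fin:learnable:MCT} at accuracy $\varepsilon/2$ with $a_i=i$, $b_i=i+1$. Your worry about ``the $\lambda$ in $\lambda_0$'' is not an issue. Lemma \ref{lem:quant:optional} is instantiated with $(\varepsilon,\lambda)\mapsto(\varepsilon_0,\lambda_0)$, and its hypothesis is then literally the hypothesis of the statement. The minimum over $e\le N$ only makes $\lambda_0$ independent of the target $\lambda$, which enters through $M$.

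The bookkeeping step you left open is a genuine gap, and the paper's proof has the same gap.
\begin{itemize}
\item Theorem \ref{thm:fin:learnable:MCT} applied to $Y_k=X_{\tau_k}$ needs $\lambda_0\le\nu^M_M(\varepsilon^2/128M^2)$ for moduli $\nu_k$ of the $Y_k$. With the moduli from Lemma \ref{lem:quant:stoppedProc}, this reads $\lambda_0\le\mu^M_N(\varepsilon^2/128M^2N)$.
\item The stated $\lambda_0$ does not give this. Since $Y_k$ can be any of $X_0,\dots,X_N$, only its $e=N$ term, $\lambda_0\le\mu^M_N(\varepsilon^2/128N^2)$, controls the $Y_k$. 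For $\PP(A)<\lambda_0$ it yields $\EE[Y_k\mathbf{1}_A]\le\varepsilon^2/128N$, and this is at most $\varepsilon^2/128M^2$ only when $N\ge M^2$.
\item The paper simply asserts that a $\lambda_0$-$\varepsilon_0$-$N$-supermartingale is a $\mu^M_e(\varepsilon^2/128e^2)$-$(\varepsilon^2/64e)$-$e$-supermartingale. In doing so it silently uses the moduli of the $X_n$ as moduli of the $X_{\tau_n}$.
\end{itemize}
Your fallback is the correct repair. Put $\lambda_0:=\min\{\mu^M_N(\varepsilon^2/128e^2N)\mid 1\le e\le N\}$. Then $e=M$ meets the hypothesis of Theorem \ref{thm:fin:learnable:MCT}, and the rest of your argument, including $\varepsilon_0\le(\varepsilon/2)^2/16M$, goes through unchanged.

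A smaller point concerns the index range. Your inclusion with $i<M$ is the right one. The paper's $\bigcap_{i\le e}$ is off by one: with exactly $e$ fluctuations, $\tau_{e+1}$ may be the default value $N$ with $|X_N-X_{\tau_e}|<\varepsilon/2$. However, Theorem \ref{thm:fin:learnable:MCT} as stated only returns some $i\le N_K$, possibly $i=M$. To get $i<M$, note that the bound $\sum_i\PP(A_i\cap B_i)<256(K+1)^2/\lambda\varepsilon^2$ in its proof is strict. So the $N_K$ intervals $[i;i+1]$ with $i<N_K$ already produce an index with $\PP(A_i\cap B_i)<\lambda/2$.
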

\begin{proof}
We define the stopping times $(\tau_j)$ as follows: We set $\tau_0:=0$ and define 
\[
\tau_j:=\begin{cases}\inf\{i\in [\tau_{j-1};N]\mid \vert X_{\tau_{j-1}}-X_i\vert\ge \varepsilon/2\} &\text{if existent},\\N&\text{otherwise}.\end{cases}
\]
Now, setting
\begin{equation*}
e:= \Bigl\lceil \frac{2048(K+1)^2}{\lambda^2\varepsilon^2}\Bigr\rceil,
\end{equation*}
and using that fact that if, for any point $m\in\NN$, $(X_n)$ experiences an $\varepsilon$-fluctuation after time $m$, then there is some point $l\geq m$ such that $|X_m-X_l|\geq \varepsilon/2$, we see that
\begin{equation*}
\PP(\fluc{N}{\varepsilon}{X_n}\geq e) \le \PP\left(\bigcap _{i \le e}\left(|X_{\tau_i}-X_{\tau_{i+1}}|\ge \varepsilon/2\right)\right).
\end{equation*}
Since $\tau_j \le N$ for all $j \in \NN$, by Lemma \ref{lem:quant:optional}, we have that $(X_{\tau_n})$ is a nonnegative $\lambda_0$-$\varepsilon_0$-$N$-supermartingale with respect to the filtration $(\mathcal{F}_{\tau_n})$ with $\EE[X_{\tau_0}]= \EE[X_0]< K$. We now apply Theorem \ref{thm:fin:learnable:MCT}, noting that we can assume $e\leq N$ (otherwise the theorem is trivially true) and therefore that $(X_{\tau_n})$ being a $\lambda_0$-$\varepsilon_0$-$N$-supermartingale implies that is it also an $\mu^M_e(\varepsilon^2/128e^2)$-$(\varepsilon^2/64e)$-$e$-supermartingale. Taking $a_i := i$ and $b_i:= i+1$, there thereby exists an $i_0 \le e$ satisfying 
\begin{equation*}
\PP(|X_{\tau_{i_0}}-X_{\tau_{{i_0}+1}}|\ge \varepsilon/2) = \PP(\exists k,l \in [{i_0};{i_0}+1]\, |X_{\tau_k}-X_{\tau_l}|\ge \varepsilon/2)< \lambda.
\end{equation*}
The result follows by noting that then
\begin{equation*}
\PP(\fluc{N}{\varepsilon}{X_n}\geq e) \le \PP\left(\bigcap_{i \le e} \left(|X_{\tau_i}-X_{\tau_{i+1}}|\ge \varepsilon/2\right)\right) \le \PP(|X_{\tau_{i_0}}-X_{\tau_{{i_0}+1}}|\ge \varepsilon/2)<\lambda.\qedhere
\end{equation*}
\end{proof}

\begin{remark}\label{rem:martFlucStrong}
Akin to Remark \ref{rem:optStopStrong}, if $(X_n)$ is already a finitary martingale in the stronger sense that the associated probability is not resolved, then the above Theorem \ref{thm:martingalefluc} can be simplified in a way that the resulting quantitative information no longer depends on associated moduli of absolute continuity. The proof then in particular relies on the more uniform variant of the quantitative optional stopping theorem established in that remark.
\end{remark}

Our last preliminary quantitative result before we move to the Robbins-Siegmund theorem is the following: 

\begin{lemma}\label{lem:monotonerates}
If $(X_n)$ is a monotone, nonnegative sequence of random variables with
\[
\PP\left(\sup_{n\in\NN}X_n\geq f(\lambda)\right)<\lambda
\]
for all $\lambda>0$ and some given function $f:(0,\infty)\to\mathbb{N}$, then we have
\[
\PP\left(J_{n,\varepsilon}(X_i)\geq \frac{2f(\lambda)}{\varepsilon}\right)<\lambda 
\]
for all $n\in\mathbb{N}$ and $\varepsilon,\lambda>0$.
\end{lemma}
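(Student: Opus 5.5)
The argument is pointwise and deterministic, followed by a single set inclusion. Fix $\omega$ and write $x_i:=X_i(\omega)$. Since $(X_n)$ is monotone and nonnegative, we may treat the nondecreasing case; the nonincreasing case is symmetric, with $x_0$ playing the role of the supremum. Suppose $J_{n,\varepsilon}(X_i)(\omega)\geq k$. Then there are indices $i_1<j_1\leq i_2<j_2\leq\ldots\leq i_k<j_k<n$ with $|x_{i_l}-x_{j_l}|\geq\varepsilon$ for each $l$.

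By monotonicity each such fluctuation is an increase, $x_{j_l}-x_{i_l}\geq\varepsilon$. The intervals $[i_l,j_l]$ are non-overlapping and ordered, and every intermediate increment $x_{i_{l+1}}-x_{j_l}$ is nonnegative. Telescoping therefore gives
\[
k\varepsilon\leq\sum_{l=1}^k (x_{j_l}-x_{i_l})\leq x_{j_k}-x_{i_1}\leq \sup_m x_m,
\]
where the last step uses $x_{i_1}\geq 0$. In the nonincreasing case the same chain gives $k\varepsilon\leq x_{i_1}-x_{j_k}\leq x_0\leq\sup_m x_m$. In either case $J_{n,\varepsilon}(X_i)(\omega)\leq \sup_m X_m(\omega)/\varepsilon$ for every $\omega$ and every $n$.

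From this bound we get the event inclusion
\[
\left\{J_{n,\varepsilon}(X_i)\geq \tfrac{2f(\lambda)}{\varepsilon}\right\}\subseteq\left\{\sup_m X_m\geq 2f(\lambda)\right\}\subseteq\left\{\sup_m X_m\geq f(\lambda)\right\},
\]
and the hypothesis bounds the probability of the right-hand side by $<\lambda$, which proves the claim. The statement allows the slack factor $2$, which is more than the argument needs: the factor $1$ would already suffice. The only points that need care are that the fluctuation intervals do not overlap, so the telescoping is legitimate, and that nonnegativity is what makes the bound by $\sup_m X_m$ valid in both monotone directions. I do not expect a genuine obstacle.
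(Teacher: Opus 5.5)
Your proof is correct and is essentially the paper's argument. The paper restricts to the event $\{\max_{i\leq n}X_i<f(\lambda)\}$, whose complement has probability $<\lambda$, and notes that a monotone sequence in $[0,f(\lambda)]$ admits only boundedly many $\varepsilon$-fluctuations, so that event cannot meet $\{J_{n,\varepsilon}(X_i)\geq 2f(\lambda)/\varepsilon\}$. Your pointwise telescoping bound $J_{n,\varepsilon}(X_i)\leq \sup_m X_m/\varepsilon$ is a slightly cleaner rendering of this, and it avoids the paper's claim $2f(\lambda)/\varepsilon>\lceil f(\lambda)/\varepsilon\rceil$, which fails when $f(\lambda)/\varepsilon\leq 1/2$ (harmlessly, since on that event the count is in fact $<f(\lambda)/\varepsilon$).
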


\begin{proof}
Fix $n\in\mathbb{N}$ as well as $\varepsilon,\lambda>0$. First we have for any $a>0$ that
\[
\PP\left(J_{n,\varepsilon}(X_i)\geq a\right)< \PP\left(J_{n,\varepsilon}(X_i)\geq a\cap \max_{i\leq n} X_i<f\left(\lambda\right)\right)+\lambda
\]
Now, let $J_{n,\varepsilon}(X_i(\omega))\geq a$ and $\max_{i\leq n} X_i(\omega)<f\left(\lambda\right)$ for $a:=2f(\lambda)/\varepsilon>\lceil f(\lambda)/\varepsilon\rceil$. Since the sequence $(X_i(\omega))_{i\leq n}$ is monotone and lies in $[0,f(\lambda)]$, there can be at most $\lceil f(\lambda)/\varepsilon\rceil$-many $\varepsilon$-fluctuations and so we have
\[
\PP\left(J_{n,\varepsilon}(X_i)\geq a\cap \max_{i\leq n} X_i<f\left(\lambda\right)\right)=0
\]
which yields the result.
\end{proof}

We now come to the main result of the section, and the culmination of our development of finitary martingale theory. The well-known Robbins-Siegmund theorem \cite{RobbinsSiegmund1971} is an extension of Doob's convergence theorem to certain almost supermartingales, stating that whenever $(X_n)$ is a nonnegative process adapted to $(\mathcal{F}_n)$ such that
\[
\EE[X_{n+1}\mid\mathcal{F}_n]\leq (1+\chi_n)X_n+\eta_n-\zeta_n 
\]
for parameter sequences $(\chi_n)$, $(\eta_n)$, $(\zeta_n)$ with $\sum_{i=0}^\infty \chi_i<+\infty$ and $\sum_{i=0}^\infty \eta_i<+\infty$, then it holds that $(X_n)$ converges to a finite limit and $\sum_{i=0}^\infty \zeta_i<+\infty$. It what follows we do not require the second part of the Robbins-Siegmund theorem and we thus restrict our attention to the special case that $\zeta_n=0$. However, akin to the previous results on finitary supermartingales, the key aspect of our quantitative variant of the Robbins-Siegmund theorem is that it only requires a finitary variant of the above almost-supermartingale condition:

\begin{definition}
Let $\lambda,\varepsilon>0$ and $N\in\mathbb{N}$. Let $(X_n)$ be a nonnegative sequence of random variables adapted to $(\mathcal{F}_n)$. The process is called a $\lambda$-$\varepsilon$-$N$-Robbins-Siegmund process w.r.t.\ nonnegative sequences of random variables $(\eta_n),(\chi_n)$ if 
\begin{gather*}
\PP\left(\EE[X_{n+1}\mid\mathcal{F}_n]> (1+\chi_n)X_n+\eta_n+ \varepsilon\right)<\lambda
\end{gather*}
for all $n< N$.
\end{definition}

We now present our quantitative version of the Robbins-Siegmund theorem as appropriately phrased for our finitary almost-supermartingales in the sense of the above definition. Just as the Robbins-Siegmund theorem is a direct extension of the Doob convergence theorem for nonnegative supermartingales, our quantitative result is a direct extension of Theorem \ref{thm:martingalefluc}, where the summability assumptions on $(\eta_n)$ and $(\chi_n)$ are interpreted computationally via a pair of moduli of almost sure boundedness $f,h:(0,\infty)\to \mathbb{N}$, and in the case of $(\chi_n)$ we treat the assumption $\sum_{i=0}^\infty \chi_i<+\infty$ in the equivalent form $\prod_{i=0}^\infty (1+\chi_i)<+\infty$. While in this way our theorem bears a resemblance to the main result of \cite{NeriPowell2026}, we stress once more than for our purposes we require a quantitative Robbins-Siegmund theorem in a different form, with uniform learnability replaced by fluctuation bounds, and crucially the almost supermartingale property given in a finitary form.

\begin{theorem}\label{thm:finRS}
Let $(X_n),(\eta_n),(\chi_n)$ be sequences of nonnegative random variables adapted to a filtration $(\mathcal{F}_n)$ and let $f,h:(0,\infty)\to \mathbb{N}$ be functions such that
\[
\PP\left(\sum_{i=0}^\infty\eta_i\geq f(\lambda)\right)<\lambda\text{ and }\PP\left(\prod_{i=0}^\infty(1+\chi_i)\geq h(\lambda)\right)<\lambda.
\]
Further, let $K> \EE[X_0]$ and let $\mu_n$ be a modulus of absolute continuity for $X_n$. Then there are functions $Z_{f,h,K}(\lambda,\varepsilon)$, $e_{f,h,K}(\lambda,\varepsilon,N)$ and $p_{f,h,K}(\lambda,\varepsilon,N)$, that satisfy the following: For any $\lambda,\varepsilon>0$ and $N\in\NN$, if $(X_n)$ is a $(p_{f,h,K}(\lambda,\varepsilon,N))$-$(e_{f,h,K}(\lambda,\varepsilon,N))$-$N$-Robbins-Siegmund process w.r.t.\ $(\eta_n),(\chi_n)$, that is
\begin{gather*}
\PP\left( \EE[X_{n+1}\mid\mathcal{F}_n]> (1+\chi_n)X_n+\eta_n+e_{f,h,K}(\lambda,\varepsilon,N)\right)<p_{f,h,K}(\lambda,\varepsilon,N)
\end{gather*}
for all $n< N$, then
\[
\PP\left(\fluc{N}{\varepsilon}{X_n} \ge  Z_{f,h,K}(\lambda,\varepsilon) \right)<\lambda.
\]
Moreover, $Z_{f,h,K},e_{f,h,K}$ and $p_{f,h,K}$ can be explicitly defined via
\[
Z_{f,h,K}(\lambda,\varepsilon):=4\Bigl\lceil \frac{2^{17}(K+f(\lambda/4)+1)^2h(\lambda/10)^2}{\lambda^2\varepsilon^2}\Bigr\rceil
\]
with
\begin{gather*}
e_{f,h,K}(\lambda,\varepsilon,N):=\min\left\{\frac{\varepsilon_0\lambda_0}{8N},\frac{\lambda(b/2+\alpha)}{8},\frac{\varepsilon}{2N}\right\},\\
p_{f,h,K}(\lambda,\varepsilon,N):=\min\left\{\mu^M_N\left(\frac{\varepsilon_0\lambda_0}{16N}\right),\mu^M_N\left(\frac{\varepsilon}{8N^2}\right)\right\}.
\end{gather*}
and where $\varepsilon_0:=\varepsilon^2/256Nh(\lambda/10)^2$ and $\lambda_0:=\min\{\mu^M_e(\varepsilon^2/512e^2h(\lambda/10)^2)\mid e\leq N\}$ and, as before, $\mu^M_N(\varepsilon):=\min\{\mu_{n}(\varepsilon)\mid n\leq N\}$. 
\end{theorem}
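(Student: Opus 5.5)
We follow the classical reduction of the Robbins-Siegmund theorem to the supermartingale convergence theorem, carried out in finitary form. Set $P_n:=\prod_{i<n}(1+\chi_i)$; this is $\mathcal{F}_{n-1}$-measurable, with $P_n\geq 1$ and $P_n\leq\prod_{i=0}^\infty(1+\chi_i)$. Define the rescaled process
\[
U_n:=\frac{X_n}{P_n}+\sum_{i=n}^{N-1}\frac{\eta_i}{P_{i+1}}\quad\text{or, more conveniently,}\quad U_n:=\frac{X_n-\sum_{i<n}\eta_i/P_{i+1}\cdot P_n}{P_n}.
\]
Since the second variant is not nonnegative, we instead localise. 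Introduce the stopping time $\tau:=\inf\{n\mid \sum_{i\leq n}\eta_i/P_{i+1}\geq f(\lambda/4)\}$, which is a stopping time because each $\eta_i,P_{i+1}$ is $\mathcal{F}_i$-measurable. We then consider
\[
V_n:=\frac{X_{n\wedge\tau}}{P_{n\wedge\tau}}+f(\lambda/4)-\sum_{i<n\wedge\tau}\frac{\eta_i}{P_{i+1}}.
\]
This process is nonnegative. Dividing the Robbins-Siegmund inequality by $P_{n+1}$ shows that, off the exceptional set, $\EE[V_{n+1}\mid\mathcal{F}_n]\leq V_n+e/P_{n+1}\leq V_n+e$. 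Hence $(V_n)$ is a $p$-$e$-$N$-supermartingale, using Lemma \ref{lem:stoppingtime} for the stopping. Moreover $\EE[V_0]\leq K+f(\lambda/4)$, so we use $K':=K+f(\lambda/4)$ as the bound on the initial mean. Moduli of absolute continuity for $V_n$ come from those of $X_n$: $P_n\geq 1$, the stopped part is handled by Lemma \ref{lem:quant:stoppedProc}, and the deterministic-bounded remainder only adds a constant. This bookkeeping is what forces the particular form of $p_{f,h,K}$ and the $\mu^M_N$-terms in $\lambda_0,\varepsilon_0$.

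Next, we apply Theorem \ref{thm:martingalefluc} to $(V_n)$ with accuracy $\varepsilon/(4h(\lambda/10))$ and probability $\lambda/4$. The choices $\varepsilon_0=\varepsilon^2/256Nh(\lambda/10)^2$ and the corresponding $\lambda_0$ are exactly the hypotheses of Theorem \ref{thm:martingalefluc} at this rescaled $\varepsilon$. This gives
\[
\PP\bigl(J_{N,\varepsilon/4h(\lambda/10)}(V_n)\geq Z'\bigr)<\lambda/4,
\]
where $Z'$ is of order $2^{15}(K'+1)^2h^2/\lambda^2\varepsilon^2$. In parallel we control the other pieces. The sum $S_n:=\sum_{i<n\wedge\tau}\eta_i/P_{i+1}$ is monotone and bounded by $f(\lambda/4)+$ one term, so Lemma \ref{lem:monotonerates} bounds its fluctuations. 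The process $P_n$ is monotone with $\PP(\sup P_n\geq h(\lambda/10))<\lambda/10$, so Lemma \ref{lem:monotonerates} bounds its fluctuations as well. To bound $\max_{n\leq N}X_n/P_n\leq\max V_n$ in probability we use the finitary Ville inequality, Lemma \ref{lem:fin:Ville}; this is where the constraints $\varepsilon/2N$ in $e_{f,h,K}$ and $\mu^M_N(\varepsilon/8N^2)$ in $p_{f,h,K}$ enter, together with the term $\lambda(b/2+\alpha)/8$, where $\alpha,b$ are the Ville threshold parameters.

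We then recombine. On the event $\{\tau=\infty\}\cap\{\sup P_n<h(\lambda/10)\}$, whose complement has probability below $\lambda/4+\lambda/10$, we have
\[
X_n=P_n\bigl(V_n-f(\lambda/4)+S_n\bigr).
\]
We apply Lemma \ref{lem:fluc:ineq}(iii) to the sum $V_n+S_n$ and Lemma \ref{lem:fluc:ineq}(iv) to the product with $P_n$. For (iv) we take $c=h(\lambda/10)$ as the bound on $P_n$ and $b$ as the Ville bound on $V_n+S_n$. Summing the exceptional probabilities, which come from $\tau$, $h$, Ville, the two monotone fluctuation bounds and Theorem \ref{thm:martingalefluc}, each allotted a share such as $\lambda/4$ or $\lambda/10$, gives total probability below $\lambda$. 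The factor $4$ in $Z_{f,h,K}$ comes from the two halvings in (iii) and (iv), and the monotone fluctuation counts are dominated by the martingale count.

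The main obstacle is the constant bookkeeping. Specifically, we must check that the moduli of absolute continuity transfer correctly to $V_n$, and that a single error pair $(e,p)$ simultaneously meets the hypotheses of Theorem \ref{thm:martingalefluc} at the rescaled accuracy and those of Lemma \ref{lem:fin:Ville}. We would first fix the probability split, then choose $\alpha,b$ in Ville so that the bound on $\max V_n$ has probability below $\lambda/10$, and only afterwards verify the numeric constant $2^{17}$.
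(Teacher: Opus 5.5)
Your plan is essentially the paper's proof. The shared steps are:
\begin{itemize}
\item the normalisation $U_n=X_n/Y_n-Z_n$ (your $P_n$ is the paper's $Y_n$);
\item the stopping time on the discounted $\eta$-sum at level $\alpha=f(\lambda/4)$, combined with Lemma~\ref{lem:stoppingtime};
\item the nonnegative shifted process $U_{n\wedge T_\alpha}+\alpha$, with initial mean below $K+\alpha$;
\item Theorem~\ref{thm:martingalefluc} at probability $\lambda/4$, Lemma~\ref{lem:fin:Ville} for the maximum, and Lemma~\ref{lem:monotonerates} for the monotone pieces;
\item Lemma~\ref{lem:fluc:ineq}(iv) followed by (iii) to recombine.
\end{itemize}
The only difference is organisational. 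The paper decomposes the unstopped process first and localises only the $U$-terms, whereas you localise on the good event first.

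\textbf{Accuracy in Theorem~\ref{thm:martingalefluc}.} One computation needs correcting. Step (iv) divides the accuracy by $c=h(\lambda/10)$ and step (iii) divides it by $2$. So Theorem~\ref{thm:martingalefluc} must be applied at accuracy $\varepsilon/2h(\lambda/10)$, not $\varepsilon/4h(\lambda/10)$.
\begin{itemize}
\item At $\varepsilon/2h(\lambda/10)$ the hypotheses of Theorem~\ref{thm:martingalefluc} are exactly $\varepsilon_0=\varepsilon^2/256Nh(\lambda/10)^2$ and the stated $\lambda_0$.
\item Its bound is then $\lceil 2^{17}(K+f(\lambda/4)+1)^2h(\lambda/10)^2/\lambda^2\varepsilon^2\rceil$, since $2048\cdot 4^2\cdot 2^2=2^{17}$.
\item The two halvings of the count in (iv) and (iii) multiply this by $4$, which gives $Z_{f,h,K}$.
\item At your accuracy $\varepsilon/4h(\lambda/10)$, the stated $e_{f,h,K}$ and $p_{f,h,K}$ would be too large to meet the theorem's hypotheses, and the constant would be $2^{19}$ rather than your $2^{15}$.
\end{itemize}

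\textbf{Moduli of absolute continuity.} You say that transferring moduli to $V_n$ is what forces the form of $p_{f,h,K}$. That is not how the paper obtains it. The paper feeds the moduli $\mu_n$ of $X_n$ unchanged into Theorem~\ref{thm:martingalefluc} and Lemma~\ref{lem:fin:Ville} for $U_{n\wedge T_\alpha}+\alpha$. Carrying out the transfer you sketch would require two adjustments:
\begin{itemize}
\item Lemma~\ref{lem:quant:stoppedProc} for the stopped part, which costs a $1/N$-type factor;
\item a cap of order $\varepsilon/f(\lambda/4)$ to absorb the bounded shift.
\end{itemize}
Together these would in general yield a smaller $p$ than the one stated. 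So that bookkeeping does not reproduce the stated formula.
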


\begin{proof}
We define the sequences $(Y_n),(Z_n)$ of nonnegative random variables by
\[
Y_n:=\prod_{i=0}^{n-1}(1+\chi_i)   \ \ \ \mbox{and} \ \ \  Z_n:=\sum_{i=0}^{n-1}\frac{\eta_i}{Y_{i+1}},
\]
noting that both are monotone. We then define
\[
U_n:=\frac{X_n}{Y_n}-Z_n.
\]
Through a repeated application of Lemma \ref{lem:fluc:ineq}, we have for any $a,b,c>0$ and $N \in \NN$:
\[
\begin{aligned}
\PP\left(J_{N,\varepsilon}(X_i)\geq a\right)&\leq \PP\left(J_{N,\frac{\varepsilon}{c}}(U_i+Z_i)\geq \frac{a}{2}\right)+\PP\left(J_{N,\frac{\varepsilon}{b}}(Y_i)\geq \frac{a}{2}\right)\\
&\quad +\PP\left(\max_{i\leq N}(|U_i+Z_i|)\geq b\right)+\PP\left(\max_{i\leq N} Y_i\geq c\right)\\
&\leq \PP\left(J_{N,\frac{\varepsilon}{2c}}(U_i)\geq\frac{a}{4}\right)+\PP\left(J_{N,\frac{\varepsilon}{2c}}(Z_i)\geq\frac{a}{4}\right)+\PP\left(J_{N,\frac{\varepsilon}{b}}(Y_i)\geq \frac{a}{2}\right)\\
&\quad +\PP\left(\max_{i\leq N}|U_i|\geq \frac{b}{2}\right)+\PP\left(\max_{i\leq N}Z_i\geq \frac{b}{2}\right)+\PP\left(\max_{i\leq N} Y_i\geq c\right).
\end{aligned}
\]
Now, let $\varepsilon, \lambda >0$ and $N \in \NN$ be given. Our aim is to now define $a,b,c>0$ and $\mu,\delta > 0$ so that if $(X_n)$ is a $\mu$-$\delta$-$N$-Robbins-Siegmund process w.r.t.\ $(\eta_n),(\chi_n)$, then the first term on the right hand side above is bounded by $\lambda/2$ and the remaining by $\lambda/10$, so that combined we have
\[
\PP\left(J_{N,\varepsilon}(X_i)\geq a\right)<\lambda.
\]
We first note that since $Y_i\geq 1$ for all $i\in\mathbb{N}$, we get
\[
\PP\left(\max_{i\leq N} Z_i\geq \frac{b}{2}\right)\leq\PP\left(\sup_{i\in\mathbb{N}} Z_i\geq \frac{b}{2}\right)\leq \PP\left(\sum_{i=0}^\infty\eta_i\geq \frac{b}{2}\right)<\frac{\lambda}{10}
\]
for $b\geq 2 f\left(\frac{\lambda}{10}\right)$. Similarly, we have
\[
\PP\left(\max_{i\leq N}Y_i\geq c\right)\leq\PP\left(\sup_{i\in\mathbb{N}}Y_i\geq c\right)\leq  \PP\left(\prod_{i=0}^\infty (1+\chi_i)\geq c\right)<\frac{\lambda}{10}
\]
for $c\geq h\left(\frac{\lambda}{10}\right)$. Further, using Lemma \ref{lem:monotonerates}, we have
\[
\PP\left(J_{N,\frac{\varepsilon}{2c}}(Z_i)\geq \frac{a}{4}\right)<\frac{\lambda}{10} \ \ \ \mbox{for} \ a\geq \frac{16c}{\varepsilon}\cdot f\left(\frac{\lambda}{10}\right)
\]
and similarly
\[
\PP\left(J_{N,\frac{\varepsilon}{b}}(Y_i)\geq\frac{a}{2}\right)<\frac{\lambda}{10} \ \ \ \mbox{for} \ a\geq \frac{4b}{\varepsilon}\cdot h\left(\frac{\lambda}{10}\right)
\]
It therefore remains to bound fluctuations and maxima for $(U_n)$. Now suppose $(X_n)$ is a $\mu$-$\delta$-$N$-Robbins-Siegmund process w.r.t.\ $(\eta_n),(\chi_n)$ for some $\mu,\delta>0$. Then we must have that $(U_n)$ is a $\mu$-$\delta$-$N$-supermartingale w.r.t.\ $(\mathcal{F}_n)$. To see this, let $n< N$ and let $A$ denote the event
\[
\EE[X_{n+1}\mid\mathcal{F}_n]\leq (1+\chi_n)X_n+\eta_n+ \delta.
\]
Then by standard properties of the conditional expectation, there is a set $\Omega'$ of measure one such that on $\Omega'\cap A$ we have 
\begin{align*}
\EE[U_{n+1} \mid \mathcal{F}_n]&=\EE\left[\frac{X_{n+1}}{Y_{n+1}}-Z_{n+1} \mid \mathcal{F}_n\right]\\
&=\frac{\EE[X_{n+1} \mid \mathcal{F}_n]}{Y_{n+1}}-Z_{n+1}\\
&\leq \frac{(1+\chi_n)X_n+\eta_n+\delta}{Y_{n+1}}-Z_{n+1}\\
&=\frac{X_n}{Y_n}+\left(\frac{\eta_n}{Y_{n+1}}-Z_{n+1}\right)+\frac{\delta}{Y_{n+1}}\\
&\leq \frac{X_n}{Y_n}-Z_n+\delta\\
&=U_n+\delta.
\end{align*}
Therefore, we in particular have
\[
\PP\left(\EE[U_{n+1} \mid \mathcal{F}_n]>U_n+\delta\right)\leq \PP\left(\EE[X_{n+1}\mid\mathcal{F}_n]> (1+\chi_n)X_n+\eta_n+ \delta\right)<\mu
\]
for any such $n<N$. Now, for arbitrary $\alpha>0$, we define $T_\alpha$ by
\[
T_\alpha:=\inf\left\{j\in\mathbb{N} \mid Z_{j+1}>\alpha\right\}.
\]
Since $Z_{j+1}$ is $\mathcal{F}_j$-measurable for all $j\in\NN$, this forms a stopping time. By Lemma \ref{lem:stoppingtime}, the stopped process $(U_{i\wedge T_\alpha})$ is also a $\mu$-$\delta$-$N$-supermartingale, and moreover, since
\[
U_{i\wedge T_\alpha}\geq -Z_{i\wedge T_\alpha}\geq -\alpha,
\]
the process $(U_{i\wedge T_\alpha}+\alpha)$ is a nonnegative $\mu$-$\delta$-$N$-supermartingale w.r.t.\ $(\mathcal{F}_n)$. Next, we observe that
\[
\PP\left(J_{N,\frac{\varepsilon}{2c}}(U_i)\geq \frac{a}{4}\right)\leq \PP\left(J_{N,\frac{\varepsilon}{2c}}(U_i)\geq \frac{a}{4}\cap \max_{i\leq N} Z_{i+1}<f\left(\frac{\lambda}{4}\right)\right)+\PP\left(Z_{N+1} \geq f\left(\frac{\lambda}{4}\right)\right)
\]
and since
\[
\PP\left(Z_{N+1} \geq f\left(\frac{\lambda}{4}\right)\right)\leq \PP\left(\sum_{i=0}^\infty\eta_i\geq f\left(\frac{\lambda}{4}\right)\right)<\frac{\lambda}{4}
\]
it suffices to show that
\[
\PP\left(J_{N,\frac{\varepsilon}{2c}}(U_i)\geq \frac{a}{4}\cap \max_{i\leq N} Z_{i+1}<f\left(\frac{\lambda}{4}\right)\right)<\frac{\lambda}{4}.
\]
For this, we now fix $\alpha:=f(\lambda/4)$ so that if $\omega$ is in the above measured set, we must have, $T_\alpha(\omega)>N$ and thus $U_{i\wedge T_\alpha(\omega)}(\omega)=U_i(\omega)$ for all $i\leq N$. So in particular
\begin{align*}
\PP\left(J_{N,\frac{\varepsilon}{2c}}(U_i)\geq \frac{a}{4}\cap \max_{i\leq N} Z_{i+1}<f\left(\frac{\lambda}{4}\right)\right)&\leq \PP\left(J_{N,\frac{\varepsilon}{2c}}(U_{i\wedge T_\alpha})\geq \frac{a}{4}\right)\\
&=\PP\left(J_{N,\frac{\varepsilon}{2c}}(U_{i\wedge T_\alpha}+\alpha)\geq \frac{a}{4}\right),
\end{align*}
where the last inequality follows from the fact that the fluctuations of a process are unchanged by adding a constant. We can now appeal to Theorem \ref{thm:martingalefluc}, also noting that
\[
\EE[U_{0\wedge T_\alpha}+\alpha]=\EE[X_0]+\alpha < K+\alpha,
\]
by which we have
\[
\PP\left(J_{N,\frac{\varepsilon}{2c}}(U_{i\wedge T_\alpha}+\alpha)\geq \frac{a}{4}\right)<\frac{\lambda}{4}
\]
for $\delta\leq \varepsilon_0\lambda_0/8N$ and $\mu\leq \mu^M_N(\varepsilon_0\lambda_0/16N)$ with $\varepsilon_0:=\varepsilon^2/256Nc^2$ and $\lambda_0:=\min\{\mu^M_e(\varepsilon^2/512e^2c^2)\mid e\leq N\}$ as well as
\[
a\geq 4\Bigl\lceil \frac{2^{17}(K+\alpha+1)^2c^2}{\lambda^2\varepsilon^2}\Bigr\rceil. 
\]
Finally, using a similar argument, we see that
\[
\begin{aligned}
\PP\left(\max_{i\leq N}|U_i|\geq \frac{b}{2}\right)&\leq \PP\left(\max_{i\leq N}|U_i|\geq \frac{b}{2}\cap \max_{i\leq N} Z_{i+1}<\alpha\right)+\frac{\lambda}{4}\\
&\leq\PP\left(\max_{i\leq N}|U_{i\wedge T_\alpha}|\geq \frac{b}{2}\right)+\frac{\lambda}{4}
\end{aligned}
\]
and if $b/2> \alpha$, then since $U_{i\wedge T_\alpha}\geq-\alpha$ we have $|U_{i\wedge T_\alpha}|\geq b/2$ if and only if $U_{i\wedge T_\alpha}\geq b/2$, and thus
\[
\PP\left(\max_{i\leq N}|U_{i\wedge T_\alpha}|\geq \frac{b}{2}\right)=\PP\left(\max_{i\leq N}(U_{i\wedge T_\alpha}+\alpha)\geq \frac{b}{2}+\alpha\right)\leq \frac{K+\alpha}{b/2+\alpha}+\frac{\delta}{b/2+\alpha}
\]
with the final inequality following from Lemma \ref{lem:fin:Ville} if $\mu\leq \mu^M_N(\varepsilon/8N^2)$ and $\delta\leq \varepsilon/2N$. Therefore
\[
\PP\left(\max_{i\leq n}|U_{i\wedge T_\alpha}|\geq \frac{b}{2}\right)<\frac{\lambda}{4}
\]
if $\frac{b}{2}> \alpha$ and $\delta<\frac{\lambda(b/2+\alpha)}{8}$ and $b\geq \frac{16}{\lambda}(K+\alpha)$. We now put together all of our conditions to obtain
\begin{align*}
\alpha&:=f\left(\frac{\lambda}{4}\right),\quad c:=h\left(\frac{\lambda}{10}\right),\quad b:=\max\left\{2f\left(\frac{\lambda}{10}\right),2\alpha+1,\frac{16(K+\alpha)}{\lambda}\right\},\\
a&:=\max\left\{\frac{16c}{\varepsilon}\cdot f\left(\frac{\lambda}{10}\right),\frac{4b}{\varepsilon}\cdot h\left(\frac{\lambda}{10}\right),4\Bigl\lceil \frac{2^{17}(K+\alpha+1)^2c^2}{\lambda^2\varepsilon^2}\Bigr\rceil\right\},\\
\delta&:=\min\left\{\frac{\varepsilon_0\lambda_0}{8N},\frac{\lambda(b/2+\alpha)}{8},\frac{\varepsilon}{2N}\right\},\\
\mu&:=\min\left\{\mu^M_N\left(\frac{\varepsilon_0\lambda_0}{16N}\right),\mu^M_N\left(\frac{\varepsilon}{8N^2}\right)\right\},
\end{align*}
with $\varepsilon_0:=\varepsilon^2/256Nc^2$ and $\lambda_0:=\min\{\mu^M_e(\varepsilon^2/512e^2c^2)\mid e\leq N\}$ as before. The value $a$ can be simplified to
\[
a:=4\Bigl\lceil \frac{2^{17}(K+f(\lambda/4)+1)^2h(\lambda/10)^2}{\lambda^2\varepsilon^2}\Bigr\rceil.
\]
All in all, for these values we have shown that if $(X_n)$ is a $\mu$-$\delta$-$N$-Robbins-Siegmund process w.r.t.\ $(\eta_n),(\chi_n)$, then $\PP(J_{n,\varepsilon}(X_i)\geq a)<\lambda$ and from this, the result follows.
\end{proof}

\begin{remark}\label{rem:RSStrong}
Akin to Remarks \ref{rem:optStopStrong} and \ref{rem:martFlucStrong}, if $(X_n)$ is already a finitary Robbins-Siegmund process in the stronger sense that the associated probability is not resolved, then the above Theorem \ref{thm:finRS} can be simplified in a way that the resulting quantitative information no longer depends on associated moduli of absolute continuity. The proof then in particular relies on the more uniform variant of Theorem \ref{thm:martingalefluc} that is discusssed in Remark \ref{rem:martFlucStrong}.
\end{remark}

\section{Quantitative convergence of stochastic quasi-Fej\'er monotone sequences}
\label{sec:main}

We now come to the secon main part of the paper, where we combine our quantitative theory of finitary almost-supermartingales with a stochastic lift of the quantitative framework for quasi-Fej\'er monotone sequences in \cite{KohlenbachLeusteanNicolae2018} to provide a quantitative convergence theorem for stochastic quasi-Fej\'er monotonicity in the presence of compactness. We begin with a series of definitions, most of which are stochastic analogues of simpler concepts from \cite{KohlenbachLeusteanNicolae2018}.

Throughout, as mentioned before, let $(X,d)$ be a metric space with a fixed but arbitrary point $o\in X$ acting as a center and let $(\Omega,\mathsf{F},\PP)$ be a given probability space. On $X$, let $F\subseteq X$ be a non-empty set. Our main concern is then with sequences of $X$-valued random variables $(x_n)$ which are stochastically quasi-Fej\'er monotone relative to $F$ in the following sense, generalizing the seminal notion as e.g.\ investigated in \cite{CombettesPesquet2015} in the setting of Hilbert spaces\footnote{Recall footnotes \ref{foot:diff1} -- \ref{foot:diff3} from the discussion in the introduction which in detail discuss the motivation for the slight differences between the notion studied in \cite{CombettesPesquet2015} and the one presented in the following.} to general metric spaces, suitable to the present setting.

\begin{definition}\label{def:stochasticFejer}
Let $\mathcal{F}=(\mathcal{F}_n)$ be a filtration of $\mathsf{F}$. An $X$-valued stochastic process $(x_n)$ adapted to $\mathcal{F}$ is stochastically quasi-Fej\'er monotone w.r.t.\ $F$ and $\mathcal{F}$ if 
\[
\forall n\in\mathbb{N}\ \forall z\in F\left(\EE[G(d(x_{n+1},z))\mid \mathcal{F}_n]\leq (1+\chi_n)G(d(x_n,z))+\eta_n\right) \AS
\]
where $G:[0,\infty)\to [0,\infty)$ is an increasing and continuous function with $G(0)=0$, and $(\eta_n),(\chi_n)\in \ell^1_+(\mathcal{F})$.
\end{definition}

These sequences enjoy particularly nice convergence properties if they are combined with a suitable approximation property towards the set $F$, regarded as an abstract representation of a set of solutions. Following \cite{KohlenbachLeusteanNicolae2018}, we first encapsulate abstractly what being approximate to being a point in $F$ means here by presuming a decomposition
\[
F=\bigcap_{k\in\mathbb{N}}AF_k
\]
for a sequence $(AF_k)$ of measurable subsets of $X$ with the property that $AF_{k+1}\subseteq AF_k$ for any $k\in\mathbb{N}$. The set $AF_k$ serves as an abstract representation of the set of all points conceived of as being ``$k$-good'' approximations of being in $F$. In this abstract formulation, the present setup for handling solution sets and their approximations is indeed very general and immediately encompasses many if not most problems naturally occuring in the literature. Deterministic examples are discussed at length in \cite{KohlenbachLeusteanNicolae2018}, while a genuine stochastic example will be studied in Section \ref{sec:common} later on.

The approximation property we then consider here is then the following particularly mild assumption, which forms a stochastic analogue of \cite[Definition 3.1, (ii)]{KohlenbachLeusteanNicolae2018}:

\begin{definition}\label{def:stochasticLiminf}
An $X$-valued stochastic process $(x_n)$ has the stochastic $\liminf$-property w.r.t.\ $F$ and $(AF_k)$ if
\[
\forall k,N\in\mathbb{N}\ \exists n\geq N(x_n\in AF_k)\AS
\]
\end{definition}

Both stochastic quasi-Fej\'er monotonicity and the above approximation property need to be treated in a quantitative way, which we now discuss in detail, beginning with the latter. Under the assumption of these quantitative variants, we will then provide a quantitative almost sure convergence result for stochastic quasi-Fej\'er monotone sequences under a relative compactness assumption on $X$.

\subsection*{Notational convention} While we so far have used general real numbers $\varepsilon>0$ to represent arbitrarily small approximate errors, which is arguably the most natural way, we in this section choose to represent said quantities by expressions of the form $1/(k+1)$ for $k\in\NN$. This serves multiple purposes: For one, it is consistent with earlier papers on the topic of quantitative aspects of Fej\'er monotone sequences. For another, and arguably more importantly, this representation will allow one to easily judge the measurability of various involved sets in the following parts, which feature various complicated quantitative notions inside probabilities. In order to now freely move between these conventions without onerous bureacracy, we make use of a slight abuse of notation where for a function $f(\varepsilon)$ taking as argument some real $\varepsilon>0$, for $k\in\NN$ we write $f(k)$ to implicity mean $f(1/(k+1))$ (for example, we write $J_{N,k}(X_n)$ for the number of $1/(k+1)$-fluctuations experienced by $(X_n)$ before time $N$). Conversely, if $g(k)$ is a function that refers to the quantity $1/(k+1)$, for $\varepsilon>0$ we write $g(\varepsilon)$ to mean $g(k)$ for the least $k\in\NN$ with $1/(k+1)<\varepsilon$. In practice, this notation is completely unambiguous. While we represent approximate arithmetical errors in this way, we do not utilize such a special representation the probabilistic error, usually denoted by $\lambda$, in the following, as there the above considerations on e.g.\ measurability do not apply. While this creates a kind of mixed notation, using natural numbers for arithmetical errors and arbitrary reals for probabilistic errors, we think that it still simplifies the presentation to stick with the ``mathematically natural'' representation as much as possible.

\subsection{Stochastic $\liminf$-moduli}

We start by noting that through elementary properties of probability spaces (recall Lemma \ref{lem:prenex}), the stochastic $\liminf$-property discussed in Definition \ref{def:stochasticLiminf} above can be rewritten in such a way that the hidden quantifiers in the almost sure requirement are expanded and then brought outside of the probability measure. Concretely, note that we can equivalently write
\begin{align*}
&\forall k,N\in\mathbb{N}\ \exists n\geq N(x_n\in AF_k)\AS\\
&\qquad\text{if, and only if, }\PP(\forall k,N\in\mathbb{N}\ \exists n\geq N(x_n\in AF_k))=1\\
&\qquad\text{if, and only if, }\forall\lambda>0\ \forall k,N\in\mathbb{N}\ \exists m\in\mathbb{N} \left(\PP(\forall n\in [N;m] (x_n\not\in AF_k))<\lambda\right).
\end{align*}
Therefore the stochastic $\liminf$-property for an $X$-valued stochastic process $(x_n)$ is equivalent to the existence a so-called stochastic $\liminf$-modulus in the sense of the following definition, providing a bound (and hence a witness) for the quantifier over $m$ in the last line above:

\begin{definition}\label{def:stochasticLiminfMod}
A function $\Phi:(0,\infty)\times\mathbb{N}^2\to\mathbb{N}$ is a stochastic $\liminf$-modulus w.r.t.\ $F$ and $(AF_k)$ for an $X$-valued stochastic process $(x_n)$ if
\[
\forall\lambda>0\ \forall k,N\in\NN\left(\PP\left(\forall n\in [N;\Phi(\lambda,k,N)](x_n\not\in AF_k)\right)<\lambda\right).
\]    
\end{definition}

At a crucial point in the course of the proof of the main result, we will however have to rely on a qualitatively equivalent but quantitatively strengthened variant of the above modulus, namely we will require the existence of a function $\Psi:(0,\infty)\times\mathbb{N}^2\to\mathbb{N}$ such that
\[
\forall\lambda>0\ \left(\PP\left(\exists k,N\in\NN\ \forall n\in [N;\Psi(\lambda,k,N)](x_n\not\in AF_k)\right)<\lambda\right).\tag{$*$}
\]
Naturally such a modulus is also a stochastic $\liminf$-modulus in the sense of Definition \ref{def:stochasticLiminfMod} and so is a priori stronger. On the other hand, we can construct a modulus satisfying $(*)$ in terms of a stochastic $\liminf$-modulus as the following result shows, though this comes at the expense of an exponential speedup in the probability.

\begin{lemma}\label{lem:stochasticLiminfModStrong}
If $\Phi$ is a stochastic $\liminf$-modulus w.r.t.\ $F$ and $(AF_k)$ for an $X$-valued stochastic process $(x_n)$, then $\Psi(\lambda,k,N):=\Phi\left(\lambda 2^{-(k+N+2)},k,N\right)$ satisfies $(*)$.
\end{lemma}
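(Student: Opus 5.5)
The plan is a countable union bound over the pairs $(k,N)$. For fixed $\lambda>0$, write the event in $(*)$ as a union
\[
\exists k,N\in\NN\ \forall n\in [N;\Psi(\lambda,k,N)](x_n\not\in AF_k)=\bigcup_{k,N\in\NN}E_{k,N},
\]
where $E_{k,N}:=\{\forall n\in[N;\Psi(\lambda,k,N)](x_n\not\in AF_k)\}$. Each $E_{k,N}$ is a finite intersection of the measurable sets $\{x_n\not\in AF_k\}$, since the $x_n$ are random variables and $AF_k$ is measurable. Hence the union is measurable. Countable subadditivity of $\PP$ then gives
\[
\PP\Bigl(\bigcup_{k,N}E_{k,N}\Bigr)\leq\sum_{k,N\in\NN}\PP(E_{k,N}).
\]

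Next I estimate each summand directly from the definition of a stochastic $\liminf$-modulus. By the choice $\Psi(\lambda,k,N)=\Phi(\lambda2^{-(k+N+2)},k,N)$, Definition \ref{def:stochasticLiminfMod} applied with probability parameter $\lambda 2^{-(k+N+2)}$ yields
\[
\PP(E_{k,N})<\lambda 2^{-(k+N+2)}.
\]

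It remains to sum the geometric series:
\[
\sum_{k,N\in\NN}2^{-(k+N+2)}=\tfrac14\Bigl(\sum_{k}2^{-k}\Bigr)\Bigl(\sum_N 2^{-N}\Bigr)=\tfrac14\cdot2\cdot2=1,
\]
so the total is at most $\lambda$. The inequality in $(*)$ must be strict, and summing strict termwise inequalities over infinitely many terms does not automatically preserve strictness. This is handled by looking at a single term: already for $(k,N)=(0,0)$ we have $\PP(E_{0,0})<\lambda/4$. Hence
\[
\sum_{k,N}\PP(E_{k,N})\leq \PP(E_{0,0})+\sum_{(k,N)\neq(0,0)}\lambda 2^{-(k+N+2)}<\lambda/4+3\lambda/4=\lambda.
\]

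There is no real obstacle here. The only points requiring care are the measurability of the union and this strictness bookkeeping.
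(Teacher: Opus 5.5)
Your proof is correct and is essentially the paper's argument: countable subadditivity over the pairs $(k,N)$, the modulus applied with probability parameter $\lambda 2^{-(k+N+2)}$, and the double geometric series summing to $1$. The paper writes the strict inequality directly after the infinite sum, which is valid for convergent series with termwise strict inequalities; isolating the $(0,0)$ term, as you do, is exactly the justification of that step.
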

\begin{proof}
Let $\lambda>0$. Then we have
\begin{align*}
&\PP\left(\exists k,N\in\NN\, \forall n\in [N;\Phi\left(\lambda 2^{-(k+N+2)},k,N\right)](x_n\notin AF_k)\right)\\
&\qquad\leq\sum_{k=0}^\infty\sum_{N=0}^\infty \PP\left(\forall n\in [N;\Phi\left(\lambda 2^{-(k+N+2)},k,N\right)](x_n\notin AF_k)\right)\\
&\qquad<\sum_{k=0}^\infty\sum_{N=0}^\infty\lambda 2^{-(k+N+2)}=\lambda\sum_{k=0}^\infty 2^{-(k+1)}\sum_{N=0}^\infty 2^{-(N+1)}=\lambda.\qedhere
\end{align*}    
\end{proof}

\begin{remark}[For logicians]\label{rem:liminfExtr}
Similar to \cite{KohlenbachLeusteanNicolae2018}, the above notion of a stochastic $\liminf$-modulus is motivated by the proof-theoretic methodology of proof mining. In particular, general logical metatheorems for proof mining in probability (see again \cite{NeriPischke2024} and in particular also \cite{NeriOlivaPischke2026}) guarantee that such moduli can in fact be extracted from large classes of proofs of the underlying non-quantitative stochastic $\liminf$-property, provided all $(AF_k)$ can be expressed via an $\exists$-formula (in the terminology of \cite{NeriPischke2024}).
\end{remark}

\subsection{Moduli of uniform stochastic quasi-Fej\'er monotonicity}

We now arrive at a uniform strengthening of the notion of being stochastically quasi-Fej\'er monotone, again following the precedent set out in \cite{KohlenbachLeusteanNicolae2018} but expanding everything to the stochastic setting. To begin with, we highlight the hidden quantifiers in $z\in F$ in the formulation of the property in Definition \ref{def:stochasticFejer} by expanding it into $\forall k\in\mathbb{N}\left( z\in AF_k\right)$: 
\begin{gather*}
\forall n\in\mathbb{N}\ \forall z\in X\big(\forall k\in\mathbb{N}\left( z\in AF_k\right)\\
\to\forall m\leq n\left(\EE[G(d(x_{m+1},z))\mid \mathcal{F}_m]\le (1+\chi_m)G(d(x_{m},z))+\eta_m\AS\right)\big).
\end{gather*}
Note that we also have introduced an additional bounded quantifier over $m$, which does not change the property but has the effect that the inner matrix of the property becomes monotone, which is particularly preferable in the context of probabilistic notions. Next, we expand the hidden quantifier in the inequality $\leq$ and in the $\PP$-a.s.\ property to arrive at the following equivalent phrasing:
\begin{gather*}
\forall n\in\mathbb{N}\ \forall z\in X\bigg(\forall k\in\mathbb{N}\left( z\in AF_k\right)\to\forall \lambda>0\ \forall r\in\mathbb{N}\ \forall m\leq n\\
\left(\PP\left(\EE[G(d(x_{m+1},z))\mid \mathcal{F}_m]> (1+\chi_m)G(d(x_{m},z))+\eta_m+\frac{1}{r+1}\right)<\lambda\right)\bigg).
\end{gather*}
Suitably prenexed, this is in turn equivalent to
\begin{gather*}
\forall \lambda>0\ \forall r,n\in\mathbb{N}\ \forall z\in X\ \exists k\in\mathbb{N}\ \forall m\leq n\\
\left(z\in AF_k\to \left(\PP\left(\EE[G(d(x_{m+1},z))\mid \mathcal{F}_m]> (1+\chi_m)G(d(x_{m},z))+\eta_m+\frac{1}{r+1}\right)<\lambda\right)\right).
\end{gather*}
We now consider a uniform variant of this property, where such a $k$ exists uniformly for all points $z$, giving rise to a new, stronger notion of stochastic Fej\'er monotonicity:
\begin{definition}\label{def:uniformstochasticFejer}
Let $\mathcal{F}=(\mathcal{F}_n)$ be a filtration of $\mathsf{F}$. An $X$-valued stochastic process $(x_n)$ adapted to $\mathcal{F}$ is uniformly stochastically quasi-Fej\'er monotone w.r.t.\ $F$, $(AF_k)$ and $\mathcal{F}$ if
\begin{gather*}
\forall \lambda>0\ \forall r,n\in\mathbb{N}\ \exists k\in\mathbb{N}\ \forall z\in AF_k\ \forall m\leq n\\
\left(\PP\left(\EE[G(d(x_{m+1},z))\mid \mathcal{F}_m]> (1+\chi_m)G(d(x_{m},z))+\eta_m+\frac{1}{r+1}\right)<\lambda\right)
\end{gather*}
where $G:[0,\infty)\to [0,\infty)$ is an increasing and continuous function with $G(0)=0$, and $(\eta_n),(\chi_n)\in \ell^1_+(\mathcal{F})$.
\end{definition}

The desired quantitative information that we seek to suitably represent the stochastic quasi-Fej\'er monotonicity of the process will be a modulus providing a bound (and hence a witness) for for the $k$ in terms of $\lambda$ and $r,n$ in the above property:

\begin{definition}\label{def:stochasticFejerMod}
A function $\zeta:(0,\infty)\times \mathbb{N}^2\to\mathbb{N}$ is a modulus of uniform stochastic quasi-Fej\'er monotonicity w.r.t.\ $F$, $(AF_k)$ and $\mathcal{F}$ for an $X$-valued stochastic process $(x_n)$ if 
\begin{gather*}
\forall \lambda>0\ \forall r,n\in\mathbb{N}\ \forall z\in AF_{\zeta(\lambda,r,n)}\ \forall m\leq n\\
\left(\PP\left(\EE[G(d(x_{m+1},z))\mid \mathcal{F}_m]> (1+\chi_m)G(d(x_{m},z))+\eta_m+\frac{1}{r+1}\right)<\lambda\right).
\end{gather*}
\end{definition}

In fact, it can be shown that this notion coincides with the usual notion of stochastic quasi-Fej\'er monotonicity over compact spaces, and under suitable continuity assumptions, and we discuss this later in the context of the proof of Theorem \ref{thm:final}.

\begin{remark}[For logicians]\label{rem:FejerExtr}
Similar to \cite{KohlenbachLeusteanNicolae2018}, the above notion of a modulus of uniform stochastic quasi-Fej\'er monotonicity is motivated by the proof-theoretic methodology of proof mining and also here, general logical metatheorems for proof mining in probability (we again refer to \cite{NeriPischke2024} and \cite{NeriOlivaPischke2026}) guarantee that such moduli can in fact be extracted from large classes of proofs of the underlying non-quantitative stochastic quasi-Fej\'er monotonicity property, provided all $(AF_k)$ can be expressed via a $\forall$-formula (in the terminology of \cite{NeriPischke2024}). While this seems to be add odds with the existential condition on the sets $(AF_k)$ as discussed in Remark \ref{rem:liminfExtr}, this tension can be resolved in many practical circumstances and we refer to \cite{KohlenbachLeusteanNicolae2018} for discussions on that matter.
\end{remark}

\subsection{A quantitative almost sure convergence theorem under relative compactness}

We now move on to our main quantitative result, providing an effective almost convergence result for uniformly stochastic quasi-Fej\'er monotone sequences which have the stochastic $\liminf$-property. As mentioned before, this will take place under a relative compactness assumption which also has to be effectivized:

\begin{definition}[essentially \cite{Gerhardy2008}]
For $A\subseteq X$, a function $\gamma:\mathbb{N}\to\mathbb{N}$ is called a modulus of total boundedness for $A$ if for all $k\in\mathbb{N}$ and any $(x_n)\subseteq A$:
\[
\exists 0\leq i<j\leq\gamma(k)\left( d(x_i,x_j)\leq \frac{1}{k+1}\right).
\]
\end{definition}

It can be easily seen that a set $A$ permits such a modulus if, and only if, it is totally bounded in the usual sense (see also the discussion in \cite{KohlenbachLeusteanNicolae2018}). Also, note that every modulus of total boundedness $\gamma$ satisfies $\gamma(k)\geq 1$ for any $k\in\NN$. For a wide range of examples of totally bounded sets where such moduli can be readily computed, we refer to \cite{KohlenbachLeusteanNicolae2018}. For instance, consider a bounded set $A\subseteq H$ with bound $b$ in a finite dimensional Hilbert space $H$ with dimension $d$. Then
\[
\gamma(k):=\ceil*{2(k+1)\sqrt{d}b}^d
\]
is a modulus of total boundedness for $A$ (cf.\ Example 2.8 in \cite{KohlenbachLeusteanNicolae2018}).

For the rest of this section, unless stated otherwise, we will assume that $AF_k\subseteq X_0$ for any $k\in\mathbb{N}$ where $X_0\subseteq X$ is a totally bounded set.

We now build up to our main result via a sequence of lemmas, whose role we each briefly outline in advance of their statement and proof.

The first such result is the following Lemma \ref{lem:fromRStoPsi}, which represents the main instantiation of our quantitative Robbins-Siegmund theorem (Theorem \ref{thm:finRS}) in the context of uniform stochastic quasi-Fej\'er monotone sequences, by which we obtain fluctuation bounds for the sequence $(G(x_n,z))$ for all $z$ in a suitably good approximation $AF_k$ to $F$.

\begin{lemma}\label{lem:fromRStoPsi}
Let $(x_n)$ be an $X$-valued stochastic process adapted to a filtration $\mathcal{F}=(\mathcal{F}_n)$ and let $(\eta_n),(\chi_n)\in \ell^1_+(\mathcal{F})$. Assume that we have functions $f,h$ such that
\[
\PP\left(\sum_{i=0}^\infty\eta_i\geq f(\lambda)\right)<\lambda\text{ and }\PP\left(\prod_{i=0}^\infty(1+\chi_i)\geq h(\lambda)\right)<\lambda,
\]
along with a modulus of uniform stochastic quasi-Fej\'er monotonicity $\zeta(\lambda,r,n)$ for $(x_n)$ w.r.t.\ $F$, $(AF_k)$ and $\mathcal{F}$, i.e. 
\begin{gather*}
\forall\lambda>0\ \forall r,n\in\mathbb{N}\ \forall z\in AF_{\zeta(\lambda,r,n)}\ \forall m\leq n\\
\left(\PP\left(\EE[G(d(x_{m+1},z))\mid \mathcal{F}_m]> (1+\chi_m)G(d(x_{m},z))+\eta_m+\frac{1}{r+1}\right)<\lambda\right).
\end{gather*}
Further, let $c$ be a bound relative to $o$ for $X_0$, i.e.\ $d(z,o)\leq c$ for all $z\in X_0$. Finally, assume that $b_0$ is such that $b_0> \EE[G(d(x_0,o)+c)]$ and that $\mu_n$ is a modulus of absolute continuity for $G(d(x_n,z))$, uniformly in $z\in X_0$.

Then there are functions $\chi_0(\lambda,k,N)$ and $\psi_0(\lambda,k)$ such that for any $\lambda>0$ and $k,N\in\mathbb{N}$:
\[
\forall z\in AF_{\chi_0(\lambda,k,N)}\left(\PP\left(\fluc{N}{k}{G(d(x_n,z))}\geq \psi_0(\lambda,k)\right)<\lambda\right).
\]
In fact, $\chi_0$ and $\psi_0$ can simply be defined by $\psi_0(\lambda,k):=Z_{f,h,b_0}(\lambda,k)$ and
\[
\chi_0(\lambda,k,N):=\zeta(p_{f,h,b_0}(\lambda,k,N),e_{f,h,b_0}(\lambda,k,N),N)
\]
with $Z$, $e$ and $p$ from Theorem \ref{thm:finRS} (also defined in terms of $\mu_n$).
\end{lemma}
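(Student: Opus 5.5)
The proof is a direct instantiation of Theorem \ref{thm:finRS}. Fix $\lambda>0$, $k,N\in\NN$ and $z\in AF_{\chi_0(\lambda,k,N)}$. We apply the theorem to $X_n:=G(d(x_n,z))$, with the given $(\eta_n),(\chi_n)$, the moduli $f,h$, the constant $K:=b_0$ and the moduli $\mu_n$. By our notational convention, the error $1/(k+1)$ plays the role of $\varepsilon$.

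First we check the standing hypotheses of Theorem \ref{thm:finRS}.
\begin{itemize}
\item $(X_n)$ is nonnegative since $G\geq 0$, and it is adapted since $x_n$ is $\mathcal{F}_n$-measurable and $G\circ d(\cdot,z)$ is continuous, hence Borel.
\item For the bound on $\EE[X_0]$, note that $z\in AF_{\chi_0(\lambda,k,N)}\subseteq X_0$ gives $d(z,o)\leq c$. The triangle inequality then gives $d(x_0,z)\leq d(x_0,o)+c$. Since $G$ is increasing, $\EE[X_0]\leq\EE[G(d(x_0,o)+c)]<b_0$. This also yields integrability of $X_0$.
\item Integrability of the remaining $X_n$ and the modulus of absolute continuity come from the hypothesis that $\mu_n$ is such a modulus for $G(d(x_n,z))$, uniformly in $z\in X_0$.
\end{itemize}
Because the modulus is uniform in $z$, the quantities $e_{f,h,b_0}(\lambda,k,N)$ and $p_{f,h,b_0}(\lambda,k,N)$, which are built from $\mu^M_N$, do not depend on $z$. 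This is what allows $\chi_0$ to be defined independently of $z$.

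The main step is to verify the finitary Robbins--Siegmund condition. We need
\[
\PP\left(\EE[X_{m+1}\mid\mathcal{F}_m]>(1+\chi_m)X_m+\eta_m+e_{f,h,b_0}(\lambda,k,N)\right)<p_{f,h,b_0}(\lambda,k,N)
\]
for all $m<N$. This is exactly what the modulus $\zeta$ provides when instantiated with probability error $p_{f,h,b_0}(\lambda,k,N)$, arithmetical error $r:=e_{f,h,b_0}(\lambda,k,N)$ (read as a natural number via the convention $g(\varepsilon)=g(r)$ with $1/(r+1)<\varepsilon$), and horizon $N$. Since $z\in AF_{\zeta(p,e,N)}$, the modulus gives the bound for all $m\leq N$. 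The error $1/(r+1)<e_{f,h,b_0}(\lambda,k,N)$, so the event with $e$ in place of $1/(r+1)$ is contained in the event controlled by $\zeta$, and the strict bound $<p$ follows by monotonicity.

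Hence $(X_n)$ is a $p_{f,h,b_0}(\lambda,k,N)$-$e_{f,h,b_0}(\lambda,k,N)$-$N$-Robbins--Siegmund process. Theorem \ref{thm:finRS} then yields
\[
\PP\left(\fluc{N}{k}{G(d(x_n,z))}\geq Z_{f,h,b_0}(\lambda,k)\right)<\lambda,
\]
which is the claim with $\psi_0:=Z_{f,h,b_0}$.

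The only delicate points are bookkeeping. The first is the conversion between real errors and the $1/(r+1)$ representation, where the strict inequality keeps us on the safe side. The second is ensuring that $p$ and $e$ are independent of $z$, which holds because the modulus is uniform over $X_0$. I expect the main care to be needed in checking that the moduli $\mu_n$ are indeed uniform enough that Theorem \ref{thm:finRS} applies with the same $p,e$ for every $z$.
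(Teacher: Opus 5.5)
Your proposal is correct and matches the paper's proof: instantiate Theorem \ref{thm:finRS} with $X_n:=G(d(x_n,z))$ and $K:=b_0$. Monotonicity of $G$ and the triangle inequality give $\EE[X_0]<b_0$, and $\zeta$, with the real error $e_{f,h,b_0}(\lambda,k,N)$ converted to an $r$ satisfying $1/(r+1)<e_{f,h,b_0}(\lambda,k,N)$, gives the finitary Robbins--Siegmund condition. The paper leaves that verification and the $z$-independence of $p_{f,h,b_0}$ and $e_{f,h,b_0}$ implicit, so your bookkeeping just spells out what the paper's two-line argument relies on.
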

\begin{proof}
As $G$ is increasing, note that we have 
\[
\EE[G(d(x_0,z))]\leq \EE[G(d(x_0,o)+c)]< b_0
\]
for any $z\in X_0$, hence in particular for any $z\in AF_{\zeta(r,n)}$ for any $r,n\in\mathbb{N}$. The lemma then follows, for any $z\in AF_{\chi_0(\lambda,k,N)}$, directly from Theorem \ref{thm:finRS} with $X_n=G(d(x_n,z))$.
\end{proof}

\begin{remark}
The above Lemma \ref{lem:fromRStoPsi}, and by that also the later Theorem \ref{thm:mainQuant}, assumes that we are equipped with moduli of absolute continuity $\mu_n$ which apply to $G(d(x_n,z))$ uniformly over $z\in X_0$. This uniformity over $z$ could be quite strong at times, but in many situations can still be readily derived. First, in case that $G=\mathrm{Id}$, this assumption reduces to the existence of moduli of absolute continuity for $d(x_n,o)$ as if $\mu_n$ is a modulus of absolute continuity for $d(x_n,o)$, then by the triangle inequality and the fact that $X_0$ is bounded relative to $o$ by $c$, we have that $\tilde{\mu}_n(\varepsilon):= \min\{\mu_n(\varepsilon/2),\varepsilon/2c\}$ is a modulus of absolute continuity for $d(x_n,z)$, uniformly over $z \in X_0$. For more complicated $G$, such a modulus might potentially be more difficult to derive in certain situations. However, in most typical situations these are also readily available, already from the way the iterative method $(x_n)$ is defined. A concrete example for such a case, featuring an extremely common type of iteration, is given in Section \ref{sec:common}. However, we also want to remind of Remark \ref{rem:modAbsContDer} here, by which the existence of such moduli can often be guaranteed through results from mathematical logic. Lastly, also recall further the previous Remarks \ref{rem:optStopStrong}, \ref{rem:martFlucStrong} and \ref{rem:RSStrong} (and note the upcoming Remark \ref{rem:mainQuantStrong}), by which the dependence on such moduli disappears under a stronger approximate stochastic quasi-Fej\'er monotonicity condition.
\end{remark}

Note that whenever $F\neq \emptyset$ and $G$ is coercive, we have that $(d(x_n,o))$ is uniformly almost surely bounded (or finite), i.e.
\[
\sup_{n\in\NN}d(x_n,o)<\infty \AS
\]
This follows by applying the Robbins-Siegmund theorem (in its usual, that is non-finitary, form), by which $(G(d(x_n,z)))$ converges and is thus almost surely bounded for any $z\in F$. Since $G$ is coercive, we therefore have that $(d(x_n,z))$, and thus also $(d(x_n,o))$, is almost surely bounded. In the following, we will crucially rely on the assumption that $(d(x_n,o))$ is uniformly almost surely bounded, and hence will require a quantitative rendering of that property. For that purpose, we introduce the notion of a modulus of uniform almost sure boundedness for the random variables $(d(x_n,o))$, that is a function $b:(0,\infty)\to\mathbb{N}$ such that
\[
\forall\lambda>0\left( \PP\left(\exists n\in\mathbb{N}\left( d(x_n,o)>b_\lambda\right)\right)<\lambda\right),
\]
which will feature crucially in the following results.\footnote{This explicit formulation of almost sure boundedness is related to the notion of \emph{tightness}, see \cite[Section 4.1]{NeriPowell2025a} for a discussion of this point.}

\begin{remark}\label{rem:uniformfinite}
Note that such a modulus of uniform almost sure boundedness for $(d(x_n,o))$ can be derived from an upper bound for the distance $d(z,o)$ for a solution $z\in F$ together with a modulus of coercivity $\kappa$ for $G$, i.e.\ such that $\kappa$ satisfies $G(x)\geq b$ for all $x\geq \kappa(b)$, by analysing the above argument. Concretely, one can first derive a modulus of uniform almost sure boundedness for $(G(d(x_n,z)))$ via the quantitative Robbins-Siegmund theorem of \cite{NeriPowell2026} (see Corollary 3.3 therein), which applies under the same quantitative assumptions on $(\eta_n)$ and $(\chi_n)$ as used in this paper. This can then be converted to a corresponding modulus for $(d(x_n,o))$ using the modulus of coercivity. Instead of resolving this argument fully, that is assuming $F\neq\emptyset$ and the coercivity of $G$ everywhere and dealing with the resulting complex modulus explicitly, we here leave this construction as a sketch, preferring a more abstract approach where we simply assume directly that $(d(x_n,o))$ is uniformly almost surely finite and that we are given a modulus of uniform almost sure boundedness for $d(x_n,o)$.
\end{remark}

The following Lemma \ref{lem:unifLemma} now outlines conditions which allow us to bring the quantification over $z$ inside the measure, controlling the probability that there exists some $z$ such that $(G(x_n,z))$ experiences a large number of fluctuations.

\begin{lemma}\label{lem:unifLemma}
Let $G$ be uniformly continuous on bounded sets with a modulus $\iota:(0,\infty)\times\mathbb{N}\to \mathbb{N}$, i.e.\ 
\[
\forall x,y\in [0,b]\left( \vert x-y\vert\leq\frac{1}{\iota_b(k)+1}\to \vert G(x)-G(y)\vert\leq\frac{1}{k+1}\right)
\]
for all $b>0$ and $k\in\mathbb{N}$. Let $\gamma(k)$ be a modulus of total boundedness, $d$ be a bound relative to $o$ for $X_0$, and let $(x_n)$ be a sequence of $X$-valued random variables with functions $\chi_0,\psi_0$ such that for any $\lambda>0$ and $k,N\in\mathbb{N}$:
\[
\forall z\in AF_{\chi_0(\lambda,k,N)}\left(\PP\left(\fluc{N}{k}{G(d(x_n,z))}\geq \psi_0(\lambda,k)\right)<\lambda\right).
\]
Assume that $b:(0,\infty)\to\mathbb{N}$ is a modulus of uniform almost sure boundedness for $(d(x_n,o))$, i.e.
\[
\forall\lambda>0\left( \PP\left(\exists n\in\mathbb{N}\left( d(x_n,o)>b_\lambda\right)\right)<\lambda\right).
\]
Then for any $\lambda>0$ and $k,N\in\mathbb{N}$ and  any finite set $Z_0\subseteq AF_{\chi_1(\lambda,k,N)}$:
\[
\PP\left(\exists z\in Z_0\left( \fluc{N}{k}{G(d(x_n,z))}\geq \psi_1(\lambda,k)\right)\right)<\lambda
\]
where 
\begin{gather*}
\chi_1(\lambda,k,N)=\chi_0\left(\frac{\lambda}{2\gamma(\iota_{b_{\lambda/2}+d}(3k+2))},3k+2,N\right),\\
\psi_1(\lambda,k)=\psi_0\left(\frac{\lambda}{2\gamma(\iota_{b_{\lambda/2}+d}(3k+2))},3k+2\right).
\end{gather*}
\end{lemma}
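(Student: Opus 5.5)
The plan is a net argument. We use total boundedness to replace the arbitrary finite set $Z_0$ by a net of at most $\gamma(\cdot)$ points. We apply the pointwise hypothesis to each net point and take a union bound. Then uniform continuity of $G$ transfers fluctuation counts from net points to all of $Z_0$. Fix $\lambda,k,N$ and abbreviate $B:=b_{\lambda/2}+d$, $j:=\iota_{B}(3k+2)$ and $M:=\gamma(j)$.

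\emph{Step 1 (net).} Since $Z_0\subseteq AF_{\chi_1(\lambda,k,N)}\subseteq X_0$ is finite, select greedily a maximal subset $\{z_1,\dots,z_m\}\subseteq Z_0$ whose points are pairwise more than $1/(j+1)$ apart. By the defining property of the modulus of total boundedness, such a sequence cannot have length exceeding $\gamma(j)$. Otherwise the sequence $z_1,\dots,z_{\gamma(j)+1}$ would contain two points at distance $\le 1/(j+1)$. Hence $m\le M$, and by maximality every $z\in Z_0$ has some $z_l$ with $d(z,z_l)\le 1/(j+1)$. Each $z_l$ lies in $AF_{\chi_0(\lambda/2M,3k+2,N)}$, so the hypothesis gives
\[
\PP\left(\fluc{N}{3k+2}{G(d(x_n,z_l))}\geq \psi_1(\lambda,k)\right)<\frac{\lambda}{2M}.
\]
A union bound then shows that the event $E_1$ that some net point has at least $\psi_1(\lambda,k)$ many $1/(3k+3)$-fluctuations has probability $<\lambda/2$.

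\emph{Step 2 (boundedness).} Let $E_2$ be the event $\exists n\,(d(x_n,o)>b_{\lambda/2})$, so that $\PP(E_2)<\lambda/2$. Outside $E_2$, for every $z\in X_0$ and $n$ we have $d(x_n,z)\le d(x_n,o)+d(z,o)\le b_{\lambda/2}+d=B$. Also $|d(x_n,z)-d(x_n,z_l)|\le d(z,z_l)\le 1/(j+1)$, and the modulus $\iota$ then gives $|G(d(x_n,z))-G(d(x_n,z_l))|\le 1/(3k+3)$ for all $n$.

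\emph{Step 3 (transfer of fluctuations).} Work outside $E_1\cup E_2$ and fix $z\in Z_0$ with approximating $z_l$. Any $1/(k+1)$-fluctuation $|G(d(x_i,z))-G(d(x_{i'},z))|\ge 1/(k+1)$ yields, by the triangle inequality, $|G(d(x_i,z_l))-G(d(x_{i'},z_l))|\ge 1/(k+1)-2/(3k+3)=1/(3k+3)$. The same disjoint index pairs therefore witness $1/(3k+3)$-fluctuations of the $z_l$-sequence. This gives $\fluc{N}{k}{G(d(x_n,z))}\le \fluc{N}{3k+2}{G(d(x_n,z_l))}<\psi_1(\lambda,k)$. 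Consequently the event in the claim is contained in $E_1\cup E_2$, which has probability $<\lambda$.

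The main obstacle is the bookkeeping in Step 1. The net must be taken inside $Z_0$, so that the net points lie in the required $AF$-set, rather than as an abstract cover. The bound on its size must also come from the specific formulation of the modulus $\gamma$. A further subtlety is that $B$ depends on $\lambda$ through $b_{\lambda/2}$, so the net scale $j$ has to be fixed before the union bound is applied. That is exactly why $\chi_1$ and $\psi_1$ are built from $\gamma(\iota_{b_{\lambda/2}+d}(3k+2))$.
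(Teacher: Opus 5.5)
Your proof is correct and follows the paper's argument. Both use a net of size $\gamma(\iota_{b_{\lambda/2}+d}(3k+2))$, a union bound with probability $\lambda/(2\gamma(\cdot))$ per net point, the boundedness event of probability $<\lambda/2$, and the $2/(3k+3)$ triangle-inequality transfer from $1/(k+1)$- to $1/(3k+3)$-fluctuations. The only inessential difference is where the net lives: you build it inside the finite set $Z_0$ as a greedy maximal separated subset, whereas the paper takes a net of all of $AF_{\chi_1(\lambda,k,N)}$ via \cite[Proposition 2.4, (ii)]{KohlenbachLeusteanNicolae2018}. That result rests on the same maximal-separated-set idea and makes the net independent of $Z_0$.
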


\begin{proof}
Fix $\lambda>0$ and $k,N\in\mathbb{N}$ and write
\[
k_1:=3k+2\text{ as well as }\lambda_1:=\frac{\lambda}{2\gamma(\iota_{b_{\lambda/2}+d}(3k+2))}.
\]
By assumption, we have
\[
\forall z\in AF_{\chi_0(\lambda_1,k_1,N)}\left(\PP\left(\fluc{N}{k_1}{G(d(x_n,z))}\geq \psi_0(\lambda_1,k_1)\right)<\lambda_1\right).
\]
As $\gamma$ is a modulus of total boundedness for $X_0$, it is also a modulus of total boundedness for $AF_{\chi_0(\lambda_1,k_1,N)}$ and so by \cite[Proposition 2.4, (ii)]{KohlenbachLeusteanNicolae2018} there are  
\[
a_1,\dots,a_{\gamma(\iota_{b_{\lambda/2}+d}(3k+2))}\in AF_{\chi_0(\lambda_1,k_1,N)}=AF_{\chi_1(\lambda,k,N)}
\]
such that for any $z\in AF_{\chi_1(\lambda,k,N)}$, there exists an $h\in [1;\gamma(\iota_{b_{\lambda/2}+d}(3k+2))]$ such that
\[
d(z,a_h)\leq\frac{1}{\iota_{b_{\lambda/2}+d}(3k+2)+1}.
\]
First observe that we have
\[
\PP\left(\fluc{N}{k_1}{G(d(x_n,a_h))}\geq \psi_1(\lambda,k)\right)<\lambda_1=\frac{\lambda}{2\gamma(\iota_{b_{\lambda/2}+d}(3k+2))}
\]
for any $h\in[1;\gamma(\iota_{b_{\lambda/2}+d}(3k+2))]$ and so in particular
\[
\PP\left(\exists h\in[1;\gamma(\iota_{b_{\lambda/2}+d}(3k+2))]\left(\fluc{N}{k_1}{G(d(x_n,a_h))}\geq \psi_1(\lambda,k)\right)\right)<\frac{\lambda}{2}.
\]
Now let $\omega$ be such that
\[
\forall h\in [1;\gamma(\iota_{b_{\lambda/2}+d}(3k+2))]\left(\fluc{N}{k_1}{G(d(x_n(\omega),a_h))}<\psi_1(\lambda,k)\right)
\]
as well as $d(x_n(\omega),o)\leq b_{\lambda/2}$ for all $n\in\mathbb{N}$. Let $z\in Z_0$ for some fixed finite set $Z_0\subseteq AF_{\chi_1(\lambda,k,N)}$. By the above, there exists an $h\in [1;\gamma(\iota_{b_{\lambda/2}+d}(3k+2))]$ such that
\[
\vert d(x_i(\omega),z)-d(x_i(\omega),a_h)\vert\leq d(z,a_h)\leq\frac{1}{\iota_{b_{\lambda/2}+d}(3k+2)+1}
\]
as well as
\[
d(x_i(\omega),z),d(x_i(\omega),a_h)\leq b_{\lambda/2}+d
\]
for any $i\in\mathbb{N}$. Thus, we have 
\[
\vert G(d(x_i(\omega),z))-G(d(x_i(\omega),a_h))\vert\leq \frac{1}{3(k+1)}
\]
for any such $i$ and this in particular yields that for any $i,j\in\mathbb{N}$:
\begin{align*}
\vert G(d(x_i(\omega),z))-G(d(x_j(\omega),z))\vert
&\leq \vert G(d(x_i(\omega),z))-G(d(x_i(\omega),a_h))\vert\\
&\qquad+\vert G(d(x_i(\omega),a_h))-G(d(x_j(\omega),a_h))\vert\\
&\qquad+\vert G(d(x_j(\omega),a_h))-G(d(x_j(\omega),z))\vert\\
&\leq \frac{2}{3(k+1)}+\vert G(d(x_i(\omega),a_h))-G(d(x_j(\omega),a_h))\vert.
\end{align*}
So, any $1/(k+1)$-fluctuation of $\left(G(d(x_i(\omega),z))\right)_{i=0}^N$ is also a $1/(k_1+1)=1/3(k+1)$-fluctuation of $\left(G(d(x_n(\omega),a_h))\right)_{i=0}^N$. Therefore we have 
\[
\fluc{N}{k}{G(d(x_n(\omega),z))}\leq \fluc{N}{k_1}{G(d(x_n(\omega),a_h))}<\psi_1(\lambda,k) 
\]
and so we have shown that
\begin{gather*}
\left\{\omega\mid \forall h\in [1;\gamma(\iota_{b_{\lambda/2}+d}(3k+2))]\left( \fluc{N}{k_1}{G(d(x_n(\omega),a_h))}<\psi_1(\lambda,k)\right)\right\}\\
\cap \{\omega\mid \forall n\in\mathbb{N}\left(d(x_n(\omega),o)\leq b_{\lambda/2}\right)\}\\
\subseteq\left\{\omega\mid \forall z\in Z_0\left(\fluc{N}{k}{G(d(x_n(\omega),z))}<\psi_1(\lambda,k )\right)\right\}
\end{gather*}
and the result follows immediately.
\end{proof}

We introduce a new kind of `set-fluctuation' parameterised by a finite set $Z_0$: Let
\[
\flucinf{k}{G(d(x_n,Z_0))}
\]
denote the largest $p$ such that there exist indices $i_1<j_1\leq i_2<j_2\leq \ldots< i_p<j_p$ such that
\[
\forall l\leq p\ \exists z\in Z_0 \left(\vert G(d(x_{i_l},z))-G(d(x_{j_l},z)\vert\geq \frac{1}{k+1}\right)
\]
and $\fluc{N}{k}{G(d(x_n,Z_0))}$ for the same notion on $\left(G(d(x_n,Z_0))\right)_{n=0}^N$.\\

The next Lemma \ref{lem:pointtosetfluc} now converts the previous assertion from Lemma \ref{lem:unifLemma} to a bound on the `set-fluctuations' of the sequence $(G(x_n,Z_0))$ where $Z_0$ is a finite set in a suitably good approximation $AF_j$ to $F$.

\begin{lemma}\label{lem:pointtosetfluc}
Fix $N\in\NN$ and let $Z_0$ be a finite set of size $p$. Suppose $\psi$ is such that for all $\lambda>0$ and $k\in\NN$:
\[
\PP\left(\exists z\in Z_0\left(\fluc{N}{k}{G(d(x_n,z))}\geq \psi(\lambda,k)\right)\right)<\lambda.
\]
Then for all $\lambda>0$ and $k\in\NN$:
\[
\PP\left(\fluc{N}{k}{G(d(x_n,Z_0))}\geq p\cdot \psi(\lambda,k)\right)<\lambda.
\]
\end{lemma}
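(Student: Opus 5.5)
The plan is to prove a pointwise (in $\omega$) combinatorial inequality,
\[
\fluc{N}{k}{G(d(x_n(\omega),Z_0))}\leq \sum_{z\in Z_0}\fluc{N}{k}{G(d(x_n(\omega),z))},
\]
and then transfer it to probabilities. Granting this inequality, fix $\omega$ outside the event $\exists z\in Z_0\left(\fluc{N}{k}{G(d(x_n,z))}\geq \psi(\lambda,k)\right)$, which has probability $<\lambda$ by hypothesis. For such $\omega$ every summand is $<\psi(\lambda,k)$, so each is at most $\psi(\lambda,k)$ minus a positive amount (fluctuation counts are integers; if $\psi$ is not integer-valued we simply use the strict inequality termwise). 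Summing over the $p$ points gives $\fluc{N}{k}{G(d(x_n(\omega),Z_0))}<p\cdot\psi(\lambda,k)$. Hence
\[
\left\{\fluc{N}{k}{G(d(x_n,Z_0))}\geq p\cdot\psi(\lambda,k)\right\}\subseteq \left\{\exists z\in Z_0\left(\fluc{N}{k}{G(d(x_n,z))}\geq \psi(\lambda,k)\right)\right\},
\]
and the claim follows by monotonicity of $\PP$. Measurability of the left-hand event is unproblematic, since $Z_0$ is finite and $N$ is fixed, so the event is a finite Boolean combination of measurable sets.

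For the pointwise inequality, fix $\omega$ and take a witnessing chain $i_1<j_1\leq i_2<j_2\leq\ldots\leq i_q<j_q\leq N$ (with the paper's endpoint convention) realising $q=\fluc{N}{k}{G(d(x_n(\omega),Z_0))}$. For each $l\leq q$, choose some $z_l\in Z_0$ with $\vert G(d(x_{i_l},z_l))-G(d(x_{j_l},z_l))\vert\geq 1/(k+1)$. This partitions $\{1,\dots,q\}$ into classes $L_z:=\{l\mid z_l=z\}$ for $z\in Z_0$. For a fixed $z$, the pairs $(i_l,j_l)$ with $l\in L_z$, listed in increasing order, still satisfy $i_{l}<j_{l}\leq i_{l'}<j_{l'}$ for consecutive members $l<l'$, because dropping intermediate pairs preserves the interlacing. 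Each of these pairs is a $1/(k+1)$-fluctuation of $(G(d(x_n(\omega),z)))$. So $\vert L_z\vert\leq\fluc{N}{k}{G(d(x_n(\omega),z))}$, and summing over $z\in Z_0$ gives the inequality.

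I do not expect a real obstacle. The only care needed is bookkeeping: the subsequence of a valid interlaced chain must itself be valid with the same endpoint bound $N$, and the strict-versus-nonstrict threshold must be handled so that $\sum_z\vert L_z\vert\geq p\psi$ forces some $\vert L_z\vert\geq\psi$. This is just pigeonhole: if all $\vert L_z\vert<\psi$, then the sum is $<p\psi$.
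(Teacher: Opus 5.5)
Your proposal is correct and follows the paper's proof. The paper likewise shows that on the event where every $z\in Z_0$ has fewer than $\psi(\lambda,k)$ fluctuations there are fewer than $p\cdot\psi(\lambda,k)$ set-fluctuations, because each set-fluctuation is a fluctuation for some $z\in Z_0$; your partition of a witnessing chain into the classes $L_z$, with the remark that each sub-chain stays interlaced, just spells out this counting step, which the paper states in one line.
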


\begin{proof}
Let $\omega$ be such that for all $z\in Z_0$:
\[
\fluc{N}{k}{G(d(x_n(\omega),z))}<\psi(\lambda,k).
\]
By definition, each set-$1/(k+1)$-fluctuation of $\left(G(d(x_n(\omega),Z_0))\right)_{n=0}^N$ represents a standard $1/(k+1)$-fluctuation of $\left(G(d(x_n(\omega),z))\right)_{n=0}^N$ for at least one $z\in Z_0$, and therefore, as by the above there are $<\psi(\lambda,k)$ of such fluctuations for every $z\in Z_0$, there can be only $<p\cdot \psi(\lambda,k)$ of the former set-fluctuations as $Z_0$ has size $p$. That is, we have shown that
\[
\left\{\omega\mid \forall z\in Z_0\left(\fluc{N}{k}{G(d(x_n(\omega),z))}<\psi(\lambda,k)\right)\right\}\subseteq \left\{\omega\mid\fluc{N}{k}{G(d(x_n(\omega),Z_0))}<p\cdot \psi(\lambda,k)\right\}
\]
from which the result follows.
\end{proof}

The following Lemma \ref{lem:fromSetFlucToUnifMeta} then converts this set-fluctuation bound from the previous Lemma \ref{lem:pointtosetfluc} to a corresponding rate of pointwise metastability, guaranteeing that for any $k\in\NN$ and $g:\NN\to \NN$ we can find an interval $[n;n+g(n)]$ and $z\in Z_0$ such that $G(d(x_i,z))$ is $1/(k+1)$-stable with high probability.

\begin{lemma}\label{lem:fromSetFlucToUnifMeta}
Let $Z_0$ be a finite set and fix $\lambda>0$ as well as $k\in\mathbb{N}$ and $g:\mathbb{N}\to\mathbb{N}$. Suppose that $\psi$ is such that
\[
\PP(\fluc{N^{\lambda,k}_g}{k}{G(d(x_n,Z_0))}\geq \psi(\lambda,k))<\lambda.
\]
Then also
\[
\PP\left(\forall n\leq\Psi(\lambda,k,g)\ \exists i,j\in [n;n+g(n)]\ \exists z\in Z_0\left(\vert G(d(x_i,z))-G(d(x_j,z))\vert > \frac{1}{k+1}\right)\right)<\lambda
\]
where we define
\[
\Psi(\lambda,k,g):=N^{\lambda,k}_g:=\tilde{g}^{(\psi(\lambda,k))}(0)
\]
with $\tilde{g}(n):=n+g(n)$.
\end{lemma}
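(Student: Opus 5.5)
The argument is a pointwise (per $\omega$) inclusion of events, in the standard style of converting fluctuation bounds into metastable rates. Write $N:=N^{\lambda,k}_g=\tilde g^{(\psi(\lambda,k))}(0)$ and set $n_0:=0$, $n_{l+1}:=\tilde g(n_l)=n_l+g(n_l)$, so that $n_l\leq N$ for all $l\leq\psi(\lambda,k)$ (as $\tilde g(n)\geq n$). It then suffices to show
\[
\left\{\omega\mid \forall n\leq N\ \exists i,j\in[n;n+g(n)]\ \exists z\in Z_0\left(\vert G(d(x_i,z))-G(d(x_j,z))\vert>\tfrac{1}{k+1}\right)\right\}\subseteq\left\{\omega\mid \fluc{N}{k}{G(d(x_n,Z_0))}\geq\psi(\lambda,k)\right\}.
\]
The hypothesis then immediately gives the bound $<\lambda$.

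For the inclusion, fix $\omega$ in the left-hand event and apply its defining property at each of the points $n_0,\dots,n_{\psi(\lambda,k)-1}$, all of which are $\leq N$. For each $l<\psi(\lambda,k)$ this yields $i_l,j_l\in[n_l;n_{l+1}]$ and some $z_l\in Z_0$ with $\vert G(d(x_{i_l},z_l))-G(d(x_{j_l},z_l))\vert>1/(k+1)$. Relabelling so that $i_l<j_l$ (they are distinct since the difference is nonzero), we get
\[
n_0\leq i_0<j_0\leq n_1\leq i_1<j_1\leq\dots\leq n_{\psi(\lambda,k)}=N.
\]
So the blocks are consecutive and non-overlapping in exactly the format required in the definition of set-fluctuations, with the witnessing $z_l$ allowed to vary from block to block. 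The strict inequality $>$ implies $\geq$. Hence there are at least $\psi(\lambda,k)$ set-$1/(k+1)$-fluctuations.

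The only delicate point is the index convention. The definition of $\fluc{N}{k}{\cdot}$ requires $j_p<N$ (or $\leq N$ when read as "on $(G(d(x_n,Z_0)))_{n=0}^N$"), whereas the last block may have $j=n_{\psi}=N$. I will read the set-fluctuation on $(\cdot)_{n=0}^N$ as permitting indices up to $N$, which matches the phrase in its definition. If strictness is needed, one can instead use only $\psi(\lambda,k)$ blocks lying inside $[0;N]$, or note that the monotone, anti-monotone bookkeeping is unaffected by this change. I expect this off-by-one to be the only obstacle; measurability of the events is clear since all quantifiers range over finite sets.
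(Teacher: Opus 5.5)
Your proposal is correct and matches the paper's proof essentially step for step. Both use the same pointwise inclusion of events, built from the iterates $n_0:=0$, $n_{l+1}:=\tilde g(n_l)$, which yields $\psi(\lambda,k)$ consecutive, non-overlapping set-fluctuation blocks inside $[0;N^{\lambda,k}_g]$. Your reading that the last index may equal $N^{\lambda,k}_g$, consistent with set-fluctuations being defined on $(G(d(x_n,Z_0)))_{n=0}^N$, is exactly what the paper tacitly uses when it says the pairs ``lie below'' $N^{\lambda,k}_g$. The fallback remarks in your last paragraph are therefore unnecessary, and the first of them would not in fact repair a strict $j<N$ convention.
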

\begin{proof}
Let $\omega$ be such that
\[
\forall n\leq\Psi(\lambda,k,g)\ \exists i,j\in [n;n+g(n)]\ \exists z\in Z_0\left(\vert G(d(x_i(\omega),z))-G(d(x_j(\omega),z))\vert > \frac{1}{k+1}\right).
\]
Take $n_0:=0$ and $n_{l+1}:=\tilde{g}(n_l)$ for $l<\psi(\lambda,k)$. Since then $n_{l}<\Psi(\lambda,k,g)$ for $l<\psi(\lambda,k)$, for each such $l$ there exist pairs $i,j\in [n_l;n_{l+1}]$ together with an accompanying $z\in Z_0$ such that
\[
\vert G(d(x_i(\omega),z))-G(d(x_j(\omega),z))\vert > \frac{1}{k+1}.
\]
As all these $\psi(\lambda,k)$-many pairs $i,j$ lie below $N^{\lambda,k}_g=\Psi(\lambda,k,g)$, we have shown
\[
\fluc{N^{\lambda,k}_g}{k}{G(d(x_n(\omega),Z_0))}\geq\psi(\lambda,k),
\]
and the result follows immediately.
\end{proof}

The next result, Lemma \ref{lem:fromPsitoConv}, is now the last one in our line of preparatory results for the main theorem in which we apply a quantitative compactness argument along with the $\liminf$-property and continuity of $G$ to transform rates of metastability for $G(d(x_i,z))$ in the above sense to a rate of metastability for $(x_i)$ in the usual sense.

\begin{lemma}\label{lem:fromPsitoConv}
Let $G$ be uniformly continuous on bounded sets with a modulus $\iota:(0,\infty)\times\mathbb{N}\to \mathbb{N}$, i.e.\ 
\[
\forall x,y\in [0,b]\left( \vert x-y\vert\leq\frac{1}{\iota_b(k)+1}\to \vert G(x)-G(y)\vert\leq\frac{1}{k+1}\right)
\]
for all $b>0$ and $k\in\mathbb{N}$. Also, let $G$ be inverse continuous at $0$ with a modulus $\nu:\mathbb{N}\to\mathbb{N}$, i.e.
\[
\forall x\geq 0\left( G(x) \leq\frac{1}{\nu(k)+1}\to x\leq\frac{1}{k+1}\right)
\]
for all $k\in\mathbb{N}$. Let $\gamma(k)$ be a modulus of total boundedness and $d$ be a bound relative to $o$ for $X_0$ and let $(x_n)$ be a sequence of $X$-valued random variables. Assume that $b:(0,\infty)\to\mathbb{N}$ is a modulus of uniform almost sure boundedness for $(d(x_n,o))$, i.e.
\[
\forall\lambda>0\left( \PP\left(\exists n\in\mathbb{N}\left( d(x_n,o)>b_\lambda\right)\right)<\lambda\right).
\]
Assume that there are $\Psi,Z$ such that for all $\lambda>0$ and all $k\in\mathbb{N}$ and $g\in\mathbb{N}^\mathbb{N}$ as well as $p\in\mathbb{N}$: 
\begin{gather*}
\forall Z_0\subseteq AF_{Z(\lambda,k,g,p)}\text{ finite of size $p$}\tag{I}\label{eq:prem1}\\
\bigg(\PP\bigg( \forall m\leq\Psi(\lambda,k,g,p)\ \exists i,j\in [m;m+g(m)]\ \exists z\in Z_0 \\
\left(\vert G(d(x_i,z))-G(d(x_j,z))\vert>\frac{1}{k+1}\right)\bigg)<\lambda\bigg).
\end{gather*}
Assume further $Z(\lambda,k,g,p)\geq k$. Finally, let $\Phi$ be such that for all $\lambda>0$:
\[
\PP\left(\exists k,N\in\mathbb{N}\ \forall n\in [N;\Phi(\lambda,k,N)]\left(x_n\not\in AF_k\right)\right)<\lambda.\tag{II}\label{eq:prem2}
\]
Then $(x_n)$ is almost surely Cauchy and furthermore, for all $\lambda>0$ and all $k\in\mathbb{N}$ and $g\in\mathbb{N}^\mathbb{N}$:
\[
\PP\left(\forall n\leq \Delta(\lambda,k,g)\ \exists i,j\in [n;n+g(n)]\left(d(x_i,x_j)>\frac{1}{k+1}\right)\right)<\lambda,\tag{III}\label{eq:conc}
\]
where $\Delta$ is defined by
\[
\Delta(\lambda,k,g):=\max\{\Delta_i(\lambda,k,g)\mid i\in [0;\gamma(\iota_{2b_{\lambda/3}}(2\nu(2k+1)+1))]\},
\]
with
\[
\Delta_i(\lambda,k,g):=\max\{\Phi(\lambda/3,k^\lambda_i,m)\mid m\leq\Psi(\widehat{\lambda}_k,k^\lambda_i,h^{\lambda/3}_{k^\lambda_i},p^\lambda_k)\},
\]
with 
\[
h^\mu_l(m):=\max\{m'+g(m')\mid m'\in [m;\Phi(\mu,l,m)]\}-m,
\]
and 
\[
\begin{cases}
k^\lambda_0:=\max\{6\nu(2k+1)+5,k\},\\
k^\lambda_{j+1}:=\max\{Z(\widehat{\lambda}_k,k^\lambda_i,h^{\lambda/3}_{k^\lambda_i},p^\lambda_k)\mid i\leq j\},
\end{cases}
\]
as well as $p^\lambda_k:=\gamma(\iota_{\max\{2b_{\lambda/3},b_{\lambda/3}+d\}}(6\nu(2k+1)+5))$ and
\[
\widehat{\lambda}_k:=\frac{\lambda}{3(\gamma(\iota_{2b_{\lambda/3}}(2\nu(2k+1)+1))+1)}.
\]
\end{lemma}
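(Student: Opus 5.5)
The plan is to lift the deterministic compactness argument of \cite{KohlenbachLeusteanNicolae2018} pathwise, discarding three exceptional events each of probability below $\lambda/3$. Write $M:=\gamma(\iota_{2b_{\lambda/3}}(2\nu(2k+1)+1))$.

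\textbf{Step 1 (choosing good events).} For each $i\leq M$, I apply \cite[Proposition 2.4, (ii)]{KohlenbachLeusteanNicolae2018} to the set $AF_{k^\lambda_{i+1}}$, whose modulus of total boundedness is again $\gamma$. This gives a fixed, deterministic finite net $Z_0^{(i)}\subseteq AF_{k^\lambda_{i+1}}$ of size $p^\lambda_k$. Its precision is chosen so that every point of $AF_{k^\lambda_{i+1}}$ lies within $1/(\iota_{\max\{2b_{\lambda/3},b_{\lambda/3}+d\}}(6\nu(2k+1)+5)+1)$ of some net point. Since $k^\lambda_{i+1}=\max_{i'\leq i}Z(\widehat\lambda_k,k^\lambda_{i'},h^{\lambda/3}_{k^\lambda_{i'}},p^\lambda_k)\geq Z(\widehat\lambda_k,k^\lambda_i,h^{\lambda/3}_{k^\lambda_i},p^\lambda_k)$ and the sets $AF_k$ are nested, premise \eqref{eq:prem1} applies to $Z_0^{(i)}$ with parameters $(\widehat\lambda_k,k^\lambda_i,h^{\lambda/3}_{k^\lambda_i},p^\lambda_k)$. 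Let $E_i$ be the resulting exceptional event, so $\PP(E_i)<\widehat\lambda_k$. Let $B:=\{\exists n\,(d(x_n,o)>b_{\lambda/3})\}$, and let $L$ be the exceptional event in \eqref{eq:prem2} taken at $\lambda/3$. Then
\[
\PP\Bigl(B\cup L\cup\bigcup_{i\leq M}E_i\Bigr)<\tfrac{\lambda}{3}+\tfrac{\lambda}{3}+(M+1)\widehat\lambda_k=\lambda .
\]
It therefore suffices to show that every $\omega$ outside this union admits some $n\leq\Delta(\lambda,k,g)$ with $d(x_i,x_j)\leq 1/(k+1)$ for all $i,j\in[n;n+g(n)]$.

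\textbf{Step 2 (pathwise construction).} Fix such an $\omega$. For each $i\leq M$, the complement of $E_i$ gives some $m_i\leq\Psi(\widehat\lambda_k,k^\lambda_i,h^{\lambda/3}_{k^\lambda_i},p^\lambda_k)$ such that, for all $z\in Z_0^{(i)}$, the values $G(d(x_\cdot,z))$ are $1/(k^\lambda_i+1)$-stable on $[m_i;m_i+h^{\lambda/3}_{k^\lambda_i}(m_i)]$. The complement of $L$ gives some $n_i\in[m_i;\Phi(\lambda/3,k^\lambda_i,m_i)]$ with $x_{n_i}\in AF_{k^\lambda_i}$. Hence $n_i\leq\Delta_i\leq\Delta$, and by the definition of $h$ we get $[n_i;n_i+g(n_i)]\subseteq[m_i;m_i+h^{\lambda/3}_{k^\lambda_i}(m_i)]$. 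The $M+1$ points $x_{n_0},\dots,x_{n_M}$ all lie in $X_0$. By the definition of $\gamma$ there are $i<j$ with $d(x_{n_i},x_{n_j})\leq 1/(\iota_{2b_{\lambda/3}}(2\nu(2k+1)+1)+1)$. Because $x_{n_j}\in AF_{k^\lambda_j}\subseteq AF_{k^\lambda_{i+1}}$, some $a\in Z_0^{(i)}$ is close to $x_{n_j}$, and hence $d(x_{n_i},a)$ is small. All distances involved are at most $\max\{2b_{\lambda/3},b_{\lambda/3}+d\}$ because $\omega\notin B$, so the modulus $\iota$ turns this into $G(d(x_{n_i},a))\leq$ roughly $2/(6\nu(2k+1)+6)$, using $G(0)=0$.

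\textbf{Step 3 (conclusion).} For $l\in[n_i;n_i+g(n_i)]$, stability with respect to $a$ gives
\[
G(d(x_l,a))\leq G(d(x_{n_i},a))+\frac{1}{k^\lambda_i+1},
\]
and $k^\lambda_i\geq k^\lambda_0\geq 6\nu(2k+1)+5$. Altogether $G(d(x_l,a))\leq 1/(\nu(2k+1)+1)$. The modulus $\nu$ then gives $d(x_l,a)\leq 1/(2k+2)$, and the triangle inequality yields $d(x_l,x_{l'})\leq 1/(k+1)$ on the whole interval. This proves \eqref{eq:conc}. Almost sure Cauchyness follows by the standard argument: if $(x_n(\omega))$ is not Cauchy, some $k$ has the property that every $n$ admits an $n'$ with a $1/(k+1)$-fluctuation in $[n;n']$. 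Encoding that $n'$ into a $g$ and using Lemma \ref{lem:prenex} together with countability of $k$ shows this event has probability below every $\lambda$.

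The main obstacle is the constant bookkeeping in Step 2. One must check that the net precision, the pigeonhole precision and $k^\lambda_0$ fit together so that the final bound is exactly $1/(\nu(2k+1)+1)$. One must also check that the nets are deterministic, chosen before $\omega$, since \eqref{eq:prem1} requires this, and that every relevant distance stays inside the range where $\iota$ applies.
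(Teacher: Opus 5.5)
Your proof follows the paper's argument step for step. It uses the same exceptional events $B$, $L$, $E_0,\dots,E_M$ with budget $\lambda/3+\lambda/3+(M+1)\widehat{\lambda}_k=\lambda$ and deterministic nets fixed before $\omega$. It chooses $m_i,n_i$ with $[n_i;n_i+g(n_i)]\subseteq[m_i;m_i+h^{\lambda/3}_{k^\lambda_i}(m_i)]$, and applies the pigeonhole principle to $x_{n_0},\dots,x_{n_M}$, with the later point $x_{n_j}$ landing in the set netted at stage $i$. Taking the net inside $AF_{k^\lambda_{i+1}}$ rather than $AF_{Z(\widehat{\lambda}_k,k^\lambda_i,h^{\lambda/3}_{k^\lambda_i},p^\lambda_k)}$ is harmless, because $(k^\lambda_j)$ is nondecreasing by the max in its definition together with $Z(\lambda,k,g,p)\geq k$.

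The only real difference is the anchor of the final estimate. The paper transfers stability from the net point $a^I_r$ to $x_{n_J}$ on the whole interval, at a cost of $3/(6(\nu(2k+1)+1))$. It then adds $G(d(x_{n_I},x_{n_J}))\leq 1/(2(\nu(2k+1)+1))$ and lands exactly on $1/(\nu(2k+1)+1)$. You anchor at $a$ itself, which needs one net comparison fewer and leaves slack.

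Your intermediate constant in Step 2 is wrong, and the use of $\iota$ needs to be made precise. You cannot apply $\iota$ to the triangle-inequality bound $d(x_{n_i},a)\leq d(x_{n_i},x_{n_j})+d(x_{n_j},a)$, for three reasons:
\begin{itemize}
\item a sum of two precisions is not a value of the modulus;
\item $\iota_b$ is not assumed monotone in $b$;
\item the pigeonhole precision is calibrated to $\iota_{2b_{\lambda/3}}$, while $d(x_{n_i},a)$ may exceed $2b_{\lambda/3}$.
\end{itemize}
Chain at the level of $G$ instead. Via $\iota_{2b_{\lambda/3}}$ and $G(0)=0$ you get $G(d(x_{n_i},x_{n_j}))\leq 1/(2(\nu(2k+1)+1))$. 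Via $\iota_{\max\{2b_{\lambda/3},b_{\lambda/3}+d\}}$ you get $\vert G(d(x_{n_i},a))-G(d(x_{n_i},x_{n_j}))\vert\leq 1/(6(\nu(2k+1)+1))$. Hence $G(d(x_{n_i},a))\leq 4/(6(\nu(2k+1)+1))$, not roughly $2/(6\nu(2k+1)+6)$. Adding the stability term gives $5/(6(\nu(2k+1)+1))\leq 1/(\nu(2k+1)+1)$, so your Step 3 stands.

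The paper's proof only establishes (III) and does not argue almost sure Cauchyness. Your sketch works once the choice of $g$ is made explicit. Suppose the event $C_k$ that there are $1/(k+1)$-fluctuations beyond every $n$ had probability $\varepsilon>0$. By continuity of measure, pick $g(n)$ so that $C_k$ minus the event of a fluctuation inside $[n;n+g(n)]$ has probability below $\varepsilon 2^{-(n+2)}$. These errors sum to at most $\varepsilon/2$, so the event in (III) has probability at least $\varepsilon/2$ for every $\lambda$, contradicting (III) at $\lambda=\varepsilon/2$.
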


\begin{proof}
Given $\lambda,\mu>0$ and $k,j\in\mathbb{N}$ as well as $g:\mathbb{N}\to\mathbb{N}$, we denote by 
\[
A^{\lambda,k,g}_{\mu,j}=\{a^{\lambda,k,g}_{\mu,j,r}\mid r\in [1;\gamma(\iota_{\max\{2b_{\mu/3},b_{\mu/3}+d\}}(6\nu(2j+1)+5))]\}
\]
the $1/(\iota_{\max\{2b_{\mu/3},b_{\mu/3}+d\}}(6\nu(2j+1)+5)+1)$-net of $AF_{Z(\lambda,k,g,p^\mu_j)}$ with size
\[
p^\mu_j:=\gamma(\iota_{\max\{2b_{\mu/3},b_{\mu/3}+d\}}(6\nu(2j+1)+5))
\]
which exists by \cite[Proposition 2.4, (ii)]{KohlenbachLeusteanNicolae2018} as $X_0$, and hence also $AF_{Z(\lambda,k,g,p^\mu_j)}$, is totally bounded with a modulus of total boundedness $\gamma$. In other words, the $a^{\lambda,k,g}_{\mu,j,h}$ have the property that
\[
\forall z\in AF_{Z(\lambda,k,g,p^\mu_j)}\ \exists r\in [1;p^\mu_j]\left(d(z,a^{\lambda,k,g}_{\mu,j,r})\leq\frac{1}{\iota_{\max\{2b_{\mu/3},b_{\mu/3}+d\}}(6\nu(2j+1)+5)+1}\right).
\]
Then, by assumption \eqref{eq:prem1}, we get for any $\lambda,\mu>0$ and $k,j\in\mathbb{N}$ as well as $g:\mathbb{N}\to\mathbb{N}$:
\begin{align*}
\PP\bigg(&\forall m\leq\Psi(\lambda,k,g,p^\mu_j)\ \exists i,j\in [m;m+g(m)]\tag{I'}\label{eq:prem1prime}\\
&\exists r\in [1;p^\mu_j]\left(\left\vert G(d(x_i,a^{\lambda,k,g}_{\mu,j,r}))-G(d(x_j,a^{\lambda,k,g}_{\mu,j,r}))\right\vert>\frac{1}{k+1}\right)\bigg)<\lambda.
\end{align*}
Denote the set measured in \eqref{eq:prem1prime} by $P_{\lambda,k,g,\mu,j}$ and denote by $Q_\lambda$ the set measured in \eqref{eq:prem2} and lastly by $R_{\lambda,k,g}$ the set measured in \eqref{eq:conc}. Denote by $S_\lambda$ the set measured in the assumption of uniform almost sure boundedness. Fix $\lambda>0$ and $k\in\mathbb{N}$ as well as $g:\mathbb{N}\to\mathbb{N}$, and take $\omega$ such that
\[
\omega\in S^c_{\lambda/3}\cap Q^c_{\lambda/3}\cap \bigcap_{l=0}^{\gamma(\iota_{2b_{\lambda/3}}(2\nu(2k+1)+1))}P^c_{\widehat{\lambda}_k,k^\lambda_l,h^{\lambda/3}_{k^\lambda_l},\lambda,k}.
\]
Then, by the last part of the intersection, we have
\begin{align*}
\exists m_l\leq\Psi(\widehat{\lambda}_k,k^\lambda_l,h^{\lambda/3}_{k^\lambda_l},p^\lambda_k)\ \forall i,j\in [m_l;m_l+h^{\lambda/3}_{k^\lambda_l}(m_l)]\ \forall r\in [1;p^\lambda_k]\tag*{$(+)_l$}\label{eq:prem1prime2}\\
\left(\left\vert G(d(x_i(\omega),a^l_{r}))-G(d(x_j(\omega),a^l_{r}))\right\vert\leq\frac{1}{6(\nu(2k+1)+1)}\right)
\end{align*}
for any $l\in [0;\gamma(\iota_{2b_{\lambda/3}}(2\nu(2k+1)+1))]$, where we write
\[
a^l_{r}=a^{\widehat{\lambda}_k,k^\lambda_l,h^{\lambda/3}_{k^\lambda_l}}_{\lambda,k,r}.
\]
and the inner formula follows from the assumed lower bound on $Z$, specifically that for $l>0$ we have
\[
k_{l}^\lambda\geq Z(\widehat{\lambda}_k,k^\lambda_0,h^{\lambda/3}_{k^\lambda_0},p^\lambda_k)\geq k^\lambda_0\geq 6\nu(2k+1)+5,
\]
which implies $k^\lambda_l+1\geq 6(\nu(2k+1)+1)$ for all such $l$. Further, by the second part of the intersection, we have 
\[
\forall k',N'\in\mathbb{N}\ \exists n'\in [N';\Phi(\lambda/3,k',N')]\left( x_{n'}(\omega)\in AF_{k'}\right).
\]
Lastly, by the first part of the intersection, we have $d(x_i(\omega),o)\leq b_{\lambda/3}$ for any $i\in\mathbb{N}$. Now, taking for each $l\in [0;\gamma(\iota_{2b_{\lambda/3}}(2\nu(2k+1)+1))]$ an $m_l$ satisfying \ref{eq:prem1prime2}, we get
\[
\exists n_l\in [m_l;\Phi(\lambda/3,k^\lambda_l,m_l)]\left( x_{n_l}(\omega)\in AF_{k^\lambda_l}\right).
\]
By definition of $h^{\lambda/3}_{k^\lambda_l}$, we have
\[
[n_l;n_l+g(n_l)]\subseteq [m_l;m_l+h^{\lambda/3}_{k^\lambda_l}(m_l)]
\]
and so we in particular have
\begin{gather*}
\exists n_l\leq\Delta_l(\lambda,k,g)\bigg( x_{n_l}(\omega)\in AF_{k^\lambda_l}\land \forall i,j\in [n_l;n_l+g(n_l)]\ \forall r\in [1;p_k]\\
\left(\left\vert G(d(x_i(\omega),a^l_{r}))-G(d(x_j(\omega),a^l_{r}))\right\vert\leq\frac{1}{6(\nu(2k+1)+1)}\right)\bigg).
\end{gather*}
for all $l\in [0;\gamma(\iota_{2b_{\lambda/3}}(2\nu(2k+1)+1))]$. Note that $x_{n_l}(\omega)\in X_0$ for any $l\in [0;\gamma(\iota_{2b_{\lambda/3}}(2\nu(2k+1)+1))]$ so that, by using the modulus of total boundedness $\gamma$ of $X_0$ on the sequence $(x_{n_l}(\omega))$, we have
\[
\exists 0\leq I<J\leq\gamma(\iota_{2b_{\lambda/3}}(2\nu(2k+1)+1))\left( d(x_{n_I}(\omega),x_{n_J}(\omega))\leq\frac{1}{\iota_{2b_{\lambda/3}}(2\nu(2k+1)+1)+1}\right).
\]
For that pair, we thus have
\[
G(d(x_{n_I}(\omega),x_{n_J}(\omega)))\leq \frac{1}{2(\nu(2k+1)+1)}.
\]
and in particular, by the above, we have
\[
x_{n_J}(\omega)\in AF_{k^\lambda_J}\subseteq AF_{Z(\widehat{\lambda}_k,k^\lambda_I,h^{\lambda/3}_{k^\lambda_I},p^\lambda_k)}
\]
as well as
\[
\forall r\in [1;p^\lambda_k]\ \forall i,j\in [n_I;n_I+g(n_I)]\left(\left\vert G(d(x_i(\omega),a^I_{r}))-G(d(x_j(\omega),a^I_{r}))\right\vert\leq\frac{1}{6(\nu(2k+1)+1)}\right).
\]
Therefore, by the properties of the $a^I_{r}$'s, we have that there exists an $r\in [1;p^\lambda_k]$ such that
\[
\vert d(x_{n_J}(\omega),x_i(\omega))-d(a^I_{r},x_i(\omega))\vert\leq d(x_{n_J}(\omega),a^I_{r})\leq\frac{1}{\iota_{\max\{2b_{\lambda/3},b_{\lambda/3}+d\}}(6\nu(2k+1)+5)+1}
\]
and so
\[
\vert G(d(x_{n_J}(\omega),x_i(\omega)))-G(d(a^I_{r},x_i(\omega)))\vert\leq\frac{1}{6(\nu(2k+1)+1)}
\]
for any $i\in [n_I;n_I+g(n_I)]$. Combined, we have for all $i,j\in [n_I;n_I+g(n_I)]$:
\[
\vert G(d(x_i(\omega),x_{n_J}(\omega)))-G(d(x_j(\omega),x_{n_J}(\omega)))\vert\leq\frac{1}{2(\nu(2k+1)+1)}.
\]
This yields 
\[
G(d(x_i(\omega),x_{n_J}(\omega)))\leq\frac{1}{\nu(2k+1)+1}
\]
and so in particular
\[
d(x_i(\omega),x_{n_J}(\omega))\leq\frac{1}{2(k+1)}
\]
for all $i\in [n_I;n_I+g(n_I)]$ from which we can deduce that
\[
\forall i,j\in [n_I;n_I+g(n_I)]\left(d(x_i(\omega),x_{j}(\omega))\leq\frac{1}{k+1}\right)
\]
and so, as $n_I\leq \Delta(\lambda,k,g)$, we get $\omega\in R^c_{\lambda,k,g}$. In summary, since $\omega$ was arbitrary, we have shown
\[
S^c_{\lambda/3}\cap Q^c_{\lambda/3}\cap \bigcap_{l=0}^{\gamma(\iota_{2b_{\lambda/3}}(2\nu(2k+1)+1))}P^c_{\widehat{\lambda}_k,k^\lambda_l,h^{\lambda/3}_{k^\lambda_l},\lambda,k}\subseteq R^c_{\lambda,k,g}
\]
and so
\begin{align*}
\PP(R_{\lambda,k,g})&\leq \PP(S_{\lambda/3})+\PP(Q_{\lambda/3})+\sum_{l=0}^{\gamma(\iota_{2b_{\lambda/3}}(2\nu(2k+1)+1))}\PP\left(P_{\widehat{\lambda}_k,k^\lambda_l,h^{\lambda/3}_{k^\lambda_l},\lambda,k}\right)\\
&< \lambda/3+\lambda/3+(\gamma(\iota_{2b_{\lambda/3}}(2\nu(2k+1)+1))+1)\widehat{\lambda}_k=\lambda,
\end{align*}
the last step following by definition of $\widehat{\lambda}_k$.
\end{proof}

\begin{remark}\label{rem:Opial}
Lemmas \ref{lem:unifLemma} -- \ref{lem:fromPsitoConv}, if considered deterministically, in fact provide a quantitative version of Opial's lemma (see e.g.\ Fact 1.1 in the recent \cite{ArakcheevBauschke2025a} and also the surrounding discussion therein) on totally bounded metric spaces in the case where we have bounds on the fluctuations for the key assumption of Opial's lemma that $d(x_k,z)$ converges for all solutions $z$ in a suitably uniform way. This illustrates the considerably combinatorial complexity inherent in Opial's lemma (and in fact already in such special cases). In particular, the fact that we have to resolve this fragment of Opial's lemma in this way is a key reason for the complexity of our final bound and a key difference to the previous quantitative results for deterministic quasi-Fej\'er monotone sequences given in \cite{KohlenbachLeusteanNicolae2018}. 
\end{remark}

We can now derive our main quantitative result. For that, we just recall the setup: We are working over a probability space $(\Omega,\mathsf{F},\PP)$ and an arbitrary metric space $X$, together with a problem set $F=\bigcap_{k\in\mathbb{N}}AF_k$ for a sequence $(AF_k)$ with $AF_{k+1}\subseteq AF_k\subseteq X_0$ for all $k\in\mathbb{N}$, all included in a set $X_0\subseteq X$ which is totally bounded.

\begin{theorem}\label{thm:mainQuant}
Let $G$ be uniformly continuous on bounded sets with a modulus $\iota:(0,\infty)\times\mathbb{N}\to \mathbb{N}$, i.e.\ 
\[
\forall x,y\in [0,b]\left( \vert x-y\vert\leq\frac{1}{\iota_b(k)+1}\to \vert G(x)-G(y)\vert\leq\frac{1}{k+1}\right)
\]
for all $b>0$ and $k\in\mathbb{N}$. Also, let $G$ be inverse continuous at $0$ with a modulus $\nu:\mathbb{N}\to\mathbb{N}$, i.e.
\[
\forall x\geq 0\left( G(x) \leq\frac{1}{\nu(k)+1}\to x\leq\frac{1}{k+1}\right)
\]
for all $k\in\mathbb{N}$. Let $\gamma(k)$ be a modulus of total boundedness and $d$ be a bound relative to $o$ for $X_0$. Let $(x_n)$ be a sequence of $X$-valued random variables adapted to $\mathcal{F}=(\mathcal{F}_n)$ and $(\eta_n),(\chi_n)\in \ell^1_+(\mathcal{F})$. Assume that we have functions $f,h$ such that
\[
\PP\left(\sum_{i=0}^\infty\eta_i\geq f(\lambda)\right)<\lambda\text{ and }\PP\left(\prod_{i=0}^\infty(1+\chi_i)\geq h(\lambda)\right)<\lambda.
\]
Further assume that we have a modulus of uniform stochastic quasi-Fej\'er monotonicity $\zeta(r,n)$ for $(x_n)$ w.r.t.\ $F$, $(AF_k)$ and $\mathcal{F}$, i.e.
\begin{gather*}
\forall \lambda>0\ \forall r,n\in\mathbb{N}\ \forall z\in AF_{\zeta(\lambda,r,n)}\ \forall m\leq n\\
\left(\PP\left(\EE[G(d(x_{m+1},z))\mid \mathcal{F}_m]> (1+\chi_m)G(d(x_{m},z))+\eta_m+\frac{1}{r+1}\right)<\lambda\right).
\end{gather*}
and that $(x_n)$ has the stochastic $\liminf$-property w.r.t.\ $F$ and $(AF_k)$ with a corresponding modulus $\Phi(\lambda,k,N)$, i.e.
\[
\forall\lambda>0\ \forall k,N\in\NN\left(\PP\left(\forall n\in [N;\Phi(\lambda,k,N)](x_n\not\in AF_k)\right)<\lambda\right).
\]    
Assume that $b:(0,\infty)\to\mathbb{N}$ is a modulus of uniform almost sure boundedness for $(d(x_n,o))$, i.e.
\[
\forall\lambda>0\left( \PP\left(\exists n\in\mathbb{N}\left( d(x_n,o)>b_\lambda\right)\right)<\lambda\right).
\]
Lastly, assume that $b_0$ is such that $b_0> \EE[G(d(x_0,o)+d)]$ and that $\mu_n$ is a modulus of absolute continuity for $G(d(x_n,z)$, uniformly in $z\in X_0$.

Then $(x_n)$ is almost surely Cauchy and further, for all $\lambda>0$ and all $k\in\mathbb{N}$ and $g\in\mathbb{N}^\mathbb{N}$:
\[
\PP\left(\forall n\leq \Delta(\lambda,k,g)\ \exists i,j\in [n;n+g(n)]\left(d(x_i,x_j)>\frac{1}{k+1}\right)\right)<\lambda,
\]
where $\Delta$ is defined by
\[
\Delta(\lambda,k,g):=\max\{\Delta_i(\lambda,k,g)\mid i\in [0;\gamma(\iota_{2b_{\lambda/3}}(2\nu(2k+1)+1))]\},
\]
with
\[
\Delta_i(\lambda,k,g):=\max\{\Phi'(\lambda/3,k^\lambda_i,m)\mid m\leq\Psi(\widehat{\lambda}_k,k^\lambda_i,h^{\lambda/3}_{k^\lambda_i},p^\lambda_k)\},
\]
with 
\[
h^\mu_l(m):=\max\{m'+g(m')\mid m'\in [m;\Phi'(\mu,l,m)]\}-m,
\]
and 
\[
\begin{cases}
k^\lambda_0:=\max\{6\nu(2k+1)+5,k\},\\
k^\lambda_{j+1}:=\max\{\tilde Z(\widehat{\lambda}_k,k^\lambda_i,h^{\lambda/3}_{k^\lambda_i},p^\lambda_k),k^\lambda_i\mid i\leq j\},
\end{cases}
\]
as well as $p^\lambda_k:=\gamma(\iota_{\max\{2b_{\lambda/3},b_{\lambda/3}+d\}}(6\nu(2k+1)+5))$ and
\[
\widehat{\lambda}_k:=\frac{\lambda}{3(\gamma(\iota_{2b_{\lambda/3}}(2\nu(2k+1)+1))+1)},
\]
where now $\Psi$ and $\tilde Z$ are defined by
\begin{align*}
\tilde Z(\lambda,k,g,q):=\zeta\Bigg(&p_{f,h,b_0}\left(\frac{\lambda}{2\gamma(\iota_{b_{\lambda/2}+d}(3k+2))},3k+2,N^{\lambda,k}_{g,q}\right),\\
&e_{f,h,b_0}\left(\frac{\lambda}{2\gamma(\iota_{b_{\lambda/2}+d}(3k+2))},3k+2,N^{\lambda,k}_{g,q}\right),N^{\lambda,k}_{g,q}\Bigg)
\end{align*}
and
\[
\Psi(\lambda,k,g,q):=N^{\lambda,k}_{g,q}:=\tilde{g}^{\left(q\cdot Z_{f,h,b_0}\left(\frac{\lambda}{2\gamma(\iota_{b_{\lambda/2}+d}(3k+2))},3k+2\right)\right)}(0)
\]
for $\tilde{g}(n):=n+g(n)$ and with $Z$, $e$ and $p$ from Theorem \ref{thm:finRS} (also defined in terms of $\mu_n$), and $\Phi'$ is defined by
\[
\Phi'(\lambda,k,N):=\Phi\left(\lambda 2^{-(k+N+2)},k,N\right).
\]
\end{theorem}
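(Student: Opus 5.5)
The plan is to obtain Theorem \ref{thm:mainQuant} by chaining the preparatory lemmas of this section and then invoking Lemma \ref{lem:fromPsitoConv}. The hypotheses of that lemma are a modulus of total boundedness $\gamma$, the moduli $\iota,\nu$, the modulus of uniform almost sure boundedness $b$, and the two premises \eqref{eq:prem1} and \eqref{eq:prem2}. So the proof reduces to manufacturing functions $\Psi,Z$ satisfying \eqref{eq:prem1} and a function satisfying \eqref{eq:prem2}, with the latter taken to be $\Phi'$.

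\textbf{Step 1: premise \eqref{eq:prem2}.} This is immediate from Lemma \ref{lem:stochasticLiminfModStrong}. Since $\Phi$ is a stochastic $\liminf$-modulus, the function $\Phi'(\lambda,k,N)=\Phi(\lambda2^{-(k+N+2)},k,N)$ satisfies $(*)$, which is exactly \eqref{eq:prem2}.

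\textbf{Step 2: premise \eqref{eq:prem1}.} Fix $\lambda,k,g,q$ and put $N:=N^{\lambda,k}_{g,q}$. We proceed through three lemmas in turn.
\begin{enumerate}
\item Lemma \ref{lem:fromRStoPsi}, with $b_0$ and the uniform moduli $\mu_n$, gives $\chi_0$ and $\psi_0=Z_{f,h,b_0}$. For every $z\in AF_{\chi_0(\lambda',k',N)}$ and all $\lambda',k'$, it yields
\[
\PP\big(\fluc{N}{k'}{G(d(x_n,z))}\geq \psi_0(\lambda',k')\big)<\lambda'.
\]
\item Lemma \ref{lem:unifLemma} then gives, for every finite $Z_0\subseteq AF_{\chi_1(\lambda,k,N)}$, the simultaneous bound $\psi_1$. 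Here one substitutes $\lambda'=\lambda/(2\gamma(\iota_{b_{\lambda/2}+d}(3k+2)))$ and $k'=3k+2$. Unfolding $\chi_1$ shows that $\chi_1(\lambda,k,N)$ is precisely $\tilde Z(\lambda,k,g,q)$.
\item Lemma \ref{lem:pointtosetfluc}, applied with $|Z_0|=q$, upgrades this to a set-fluctuation bound $q\cdot\psi_1(\lambda,k)$. Lemma \ref{lem:fromSetFlucToUnifMeta} then converts it into a metastability statement with bound $\tilde g^{(q\psi_1(\lambda,k))}(0)=N$, which is the $\Psi$ of the statement.
\end{enumerate}

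One subtlety must be checked. The set-fluctuation hypothesis of Lemma \ref{lem:fromSetFlucToUnifMeta} is needed at the horizon $N^{\lambda,k}_g$ itself. That horizon depends on $\psi$, and $\psi$ does not depend on $N$; only $\chi_0$ (hence $\tilde Z$) depends on $N$. So fixing $N$ first and then choosing $AF_{\tilde Z}$ is consistent, and there is no circularity. This is why $\tilde Z$ takes $N^{\lambda,k}_{g,q}$ as its horizon argument. The inequalities in the two lemmas also have to match: one uses $\geq 1/(k+1)$ for fluctuations and the other $>1/(k+1)$ in the conclusion. The strict version is implied, so this is harmless.

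\textbf{Step 3: remaining hypotheses and the bound.} Lemma \ref{lem:fromPsitoConv} additionally assumes $Z(\lambda,k,g,p)\geq k$. The given $\tilde Z$ need not satisfy this. The statement builds in the extra term $k^\lambda_i$ in the recursion for $k^\lambda_{j+1}$, so I would apply the lemma with $Z:=\max\{\tilde Z,k\}$. This is legitimate because $AF$ is decreasing, so \eqref{eq:prem1} remains valid for larger indices. It also reproduces exactly the recursion $k^\lambda_{j+1}=\max\{\tilde Z(\dots),k^\lambda_i\mid i\leq j\}$; monotonicity of $k^\lambda_l$ is what the proof actually uses.

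With $\Phi'$ in place of $\Phi$, the bound $\Delta$ of Lemma \ref{lem:fromPsitoConv} becomes exactly the $\Delta$ of the theorem. This yields the displayed metastability estimate. Almost sure Cauchyness follows from the estimate via Lemma \ref{lem:prenex}, as noted in Lemma \ref{lem:fromPsitoConv}.

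The main obstacle is purely bookkeeping. One must verify that the parameter substitutions through Lemmas \ref{lem:fromRStoPsi}--\ref{lem:fromSetFlucToUnifMeta} produce exactly $\tilde Z$ and $\Psi$. In particular, the modulus $\iota_{b_{\lambda/2}+d}$ in Lemma \ref{lem:unifLemma} uses the bound $d$ on $X_0$, and the roles of $b_0$ and $\mu_n$ in Theorem \ref{thm:finRS} have to be tracked carefully.
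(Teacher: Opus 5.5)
Your proposal is correct and follows the paper's proof essentially step for step. You combine Lemmas \ref{lem:fromRStoPsi} and \ref{lem:unifLemma} at the horizon $N^{\lambda,k}_{g,q}$, pass through Lemmas \ref{lem:pointtosetfluc} and \ref{lem:fromSetFlucToUnifMeta} to get \eqref{eq:prem1} for $\Psi$ and $\max\{\tilde Z,k\}$, obtain \eqref{eq:prem2} for $\Phi'$ from Lemma \ref{lem:stochasticLiminfModStrong}, and conclude with Lemma \ref{lem:fromPsitoConv}, which is exactly what the paper does.
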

\begin{proof}
First, combining Lemmas \ref{lem:fromRStoPsi} and \ref{lem:unifLemma}, we have that for any $\lambda>0$ and $k,N\in\NN$, for any finite set $Z_0\subseteq AF_{\chi_1(\lambda,k,N)}$
\[
\PP\left(\exists z\in Z_0\left(J_{N,k}\left(G(d(x_n,z))\right)\geq \psi_1(\lambda,k)\right)\right)<\lambda
\]
for
\begin{align*}
\chi_1(\lambda,k,N):=\zeta\Bigg(&p_{f,h,b_0}\left(\frac{\lambda}{2\gamma(\iota_{b_{\lambda/2}+d}(3k+2))},3k+2,N\right),\\
&e_{f,h,b_0}\left(\frac{\lambda}{2\gamma(\iota_{b_{\lambda/2}+d}(3k+2))},3k+2,N\right),N\Bigg)
\end{align*}
and
\[
\psi_1(\lambda,k):=Z_{f,h,b_0}\left(\frac{\lambda}{2\gamma(\iota_{b_{\lambda/2}+d}(3k+2))},3k+2\right).
\]
Thus by Lemma \ref{lem:pointtosetfluc}, whenever $Z_0$ has size $q$ then
\[
\PP\left(J_{N,k}\left(G(d(x_n,Z_0))\right)\geq q\cdot\psi_1(\lambda,k)\right)<\lambda,
\]
and so by Lemma \ref{lem:fromSetFlucToUnifMeta} we derive that $\Psi$ and $\tilde Z$ defined as above satisfy for all $\lambda>0$ and all $k\in\mathbb{N}$ and $g\in\mathbb{N}^\mathbb{N}$ as well as $q\in\mathbb{N}$: 
\begin{gather*}
\forall Z_0\subseteq AF_{\max\{\tilde Z(\lambda,k,g,q),k\}}\text{ finite of size $q$}\\
\bigg(\PP\bigg( \forall m\leq\Psi(\lambda,k,g,q)\ \exists i,j\in [m;m+g(m)]\ \exists z\in Z_0 \\
\left(\vert G(d(x_i,z))-G(d(x_j,z))\vert>\frac{1}{k+1}\right)\bigg)<\lambda\bigg).
\end{gather*}
Briefly observing, using Lemma \ref{lem:stochasticLiminfModStrong}, that $\Phi'$ satisfies 
\[
\forall\lambda>0\ \left(\PP\left(\exists k,N\in\NN\ \forall n\in [N;\Phi'(\lambda,k,N)](x_n\not\in AF_k)\right)<\lambda\right),
\]
the result then follows from Lemma \ref{lem:fromPsitoConv}.
\end{proof}

\begin{remark}
Broadly speaking, the rate of metastability in Theorem \ref{thm:mainQuant} represents the synthesis of two main components:
\begin{enumerate}

	\item The quantitative Robbins-Siegmund theorem (cf.\ Theorem \ref{thm:finRS}), as brought into the framework of stochastic quasi-Fej\'er monotonicity by Lemma \ref{lem:fromRStoPsi} to yield a modulus of finite fluctuations for $G(d(x_n,z))$ for $z\in AF_k$ for $k$ sufficiently large,

	\item A quantitative version of (the stochastic lifting of) Opial's lemma (cf.\ Lemmas \ref{lem:unifLemma} -- \ref{lem:fromPsitoConv} and recall also Remark \ref{rem:Opial}) which converts this modulus of finite fluctuations to a pointwise rate of metastability for $(x_n)$.

\end{enumerate}
While the former carries a greater \emph{conceptual} complexity, and indeed occupied us for the first half of the paper, the resulting bounds are relatively simple. In particular, the fluctuation bound $\psi_0(\lambda,k)$ for $G(d(x_n,z))$ as in Lemma \ref{lem:fromRStoPsi} is essentially of the form $C_\lambda/\lambda^2\varepsilon^2$, where in the special case that $(\eta_n),(\chi_n)$ are deterministic, $C_\lambda:=C$ for some fixed constant, which in turn aligns with the (optimal) bounds for uniform convergence of $L_1$-martingales established in \cite{NeriPowell2025a} (see in particular the discussion in Section 7.4 of that paper). The bound $\chi_0(\lambda,k,N)$ in Lemma \ref{lem:fromRStoPsi} is new in this paper, but is still nevertheless built up of simple polynomial terms which are taken as arguments by the various moduli $\zeta,f,g,\mu_n$.

The second component, on the other hand, carries greater \emph{computational} complexity, notably in the recursive construction in Lemma \ref{lem:fromPsitoConv}. Indeed, it is in Lemma \ref{lem:fromPsitoConv} that simple fluctuation bounds no longer seem to be possible, forcing us to move to the more general metastable rate of pointwise convergence. Thus, as already highlighted, the main combinatorial complexity of our construction lies in Opial's lemma rather than supermartingale convergence, where the latter, though mathematically more involved, admits bounds that are of a lower complexity and more uniform in nature.
\end{remark}

\begin{remark}\label{rem:mainQuantStrong}
Akin to Remarks \ref{rem:optStopStrong}, \ref{rem:martFlucStrong} and \ref{rem:RSStrong}, if $(x_n)$ is already uniformly quasi-Fej\'er monotone in the stronger sense that the associated probability is not resolved, then the above Theorem \ref{thm:mainQuant} can be simplified in a way that the resulting quantitative information no longer depends on associated moduli of absolute continuity. The proof then in particular relies on the more uniform variant of Theorem \ref{thm:finRS} that is discussed in Remark \ref{rem:RSStrong}.
\end{remark}

Forgetting about the quantitative data, we obtain the following ``plain'' almost sure convergence result as an immediate corollary of our main theorem above:

\begin{theorem}\label{thm:pre_final}
Let $(\Omega,\mathsf{F},\PP)$ be a probability space and let $X$ be a metric space. Further, let $X_0\subseteq X$ be totally bounded and let $F=\bigcap_{k\in\mathbb{N}}AF_k$ for a sequence $(AF_k)$ with $AF_{k+1}\subseteq AF_k\subseteq X_0$ for all $k\in\mathbb{N}$. Let $G$ be increasing and continuous with $G(0)=0$. Let $(x_n)$ be a sequence of $X$-valued random variables which is uniformly stochastically quasi-Fej\'er monotone w.r.t.\ $F$, $(AF_k)$ and $\mathcal{F}$ and which has the stochastic $\liminf$-property w.r.t.\ $F$ and $(AF_k)$. Assume that $(d(x_n,o))$ is uniformly almost surely bounded and that $G(d(x_0,o)+d)$ has finite mean. Then $(x_n)$ is almost surely Cauchy.
\end{theorem}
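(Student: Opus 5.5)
The plan is to deduce Theorem \ref{thm:pre_final} from Theorem \ref{thm:mainQuant}. Every qualitative hypothesis will be turned into the corresponding modulus by a non-effective choice, and the metastable conclusion will then be shown to imply almost sure Cauchyness. Effectiveness plays no role, so each quantifier alternation can be witnessed by an arbitrary choice function.

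\textbf{Step 1: producing the moduli.}
\begin{enumerate}
\item Since $(\eta_n),(\chi_n)\in\ell^1_+(\mathcal{F})$, the variables $\sum_i\eta_i$ and $\prod_i(1+\chi_i)\le\exp(\sum_i\chi_i)$ are almost surely finite. Continuity of $\PP$ then gives $f,h$ with $\PP(\sum_i\eta_i\ge f(\lambda))<\lambda$ and the analogous bound for the product.
\item Likewise, uniform almost sure boundedness of $(d(x_n,o))$ gives the modulus $b_\lambda$.
\item Uniform stochastic quasi-Fej\'er monotonicity (Definition \ref{def:uniformstochasticFejer}) is literally a $\forall\exists$ statement, so choosing the $k$ gives $\zeta$.
\item By the equivalence displayed before Definition \ref{def:stochasticLiminfMod}, the stochastic $\liminf$-property yields a modulus $\Phi$.
\item Total boundedness of $X_0$ yields $\gamma$, and $X_0$ is bounded, giving a bound $d$ relative to $o$.
\item Since $G$ is continuous, it is uniformly continuous on each $[0,b]$, which gives $\iota$.
\item Take $b_0:=\EE[G(d(x_0,o)+d)]+1$.
\end{enumerate}

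\textbf{Step 2: the moduli needing care.} Two moduli are less routine.

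First, $\nu$ requires inverse continuity at $0$. This holds if $G$ is strictly increasing (as assumed in the introduction): then $\inf\{G(x)\mid x\ge 1/(k+1)\}=G(1/(k+1))>0$, so a suitable $\nu(k)$ exists. If only ``increasing'' is available, I would work with a strictly increasing continuous minorant or simply read the assumption as strict monotonicity, as in the setup of the introduction.

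Second, moduli of absolute continuity $\mu_n$ for $G(d(x_n,z))$, uniform in $z\in X_0$, are needed. I expect this to be the main obstacle. My first attempt: since $d(x_n,z)\le d(x_n,o)+d$ and $G$ is increasing, the family is dominated by $G(d(x_n,o)+d)$. A single integrable dominating variable has a modulus, which then serves uniformly in $z$. So it suffices to show $\EE[G(d(x_n,o)+d)]<\infty$.

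For $n=0$ this is the hypothesis. For general $n$, I would try induction using the Fej\'er inequality applied to a point $z\in F$ (or to $z\in AF_k$ via the approximate inequality), together with $G(d(x_{n},o)+d)\le$ something controlled by $G(d(x_n,z))$. This needs a doubling-type bound on $G$, which is not assumed.

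The fallback is to avoid the issue entirely. For the qualitative claim one may localize. The stopping argument in Theorem \ref{thm:finRS} only used the moduli to make the finitary supermartingale machinery work, and when the almost sure (unresolved) inequality holds, Remark \ref{rem:RSStrong} removes the dependence on $\mu_n$. So I would use the uniform (non-resolved) variant of Theorem \ref{thm:mainQuant} from Remark \ref{rem:mainQuantStrong}. Alternatively, I would truncate on the event $\{\sup_n d(x_n,o)\le b_\lambda\}$, where all $G(d(x_n,z))$ are bounded by $G(b_\lambda+d)$, and constant bounds have trivial moduli.

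\textbf{Step 3: from metastability to Cauchyness.} With all data in hand, Theorem \ref{thm:mainQuant} applies and yields $\Delta$. For the almost sure Cauchy property, suppose the event $E_k$ that $(x_n(\omega))$ has infinitely many disjoint $1/(k+1)$-fluctuations had positive probability $\lambda$. On $E_k$, for each $n$ there is some later pair $i,j\ge n$ with $d(x_i,x_j)>1/(k+1)$. A standard argument then chooses a single $g$ for which the event of Theorem \ref{thm:mainQuant} has probability at least $\lambda/2$, contradicting the bound $<\lambda/2$.

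The difficulty is that $g$ must be a deterministic function while the fluctuation locations depend on $\omega$. This is handled by using continuity of measure to pick, for each $n$, a deterministic $g(n)$ such that with probability at least $1-\lambda 2^{-n-2}$ on $E_k$ a fluctuation occurs in $[n;n+g(n)]$. Taking $\bigcap_k E_k^c$ then shows that $(x_n)$ is almost surely Cauchy.
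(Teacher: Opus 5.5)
Your overall route is the paper's. The paper gives no separate proof: it calls Theorem \ref{thm:pre_final} an immediate corollary of Theorem \ref{thm:mainQuant}, i.e.\ choose all moduli non-effectively and pass from the metastable bound to almost sure Cauchyness. Your Steps 1 and 3 are fine. Reading ``increasing'' as strictly increasing is not just convenient but necessary for $\nu$: with $G\equiv 0$, $AF_k=X_0=[0,1]$ and $x_n$ deterministically alternating between $0$ and $1$, every hypothesis holds and the conclusion fails. Your minorant alternative does not help, since no such minorant exists when $G$ vanishes near $0$, and the Fej\'er inequality would not transfer to it anyway.

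The genuine gap is the moduli $\mu_n$. You rightly flag them but do not resolve them, and the paper's ``immediate corollary'' passes over them too.

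\emph{Your first attempt cannot work}, because integrability of $G(d(x_n,z))$ for $n\geq 1$ can fail under the hypotheses. Take $X=\RR$, $o=0$, $X_0=AF_k=\{0\}$, $G(t)=t$, and an a.s.\ finite $W\geq 0$ with $\EE[W]=\infty$. Set $\mathcal{F}_n:=\sigma(W)$, $x_0=0$, $x_1=W$, $x_n=0$ for $n\geq 2$, $\eta_0=W$, and all other $\eta_n,\chi_n$ zero. Every hypothesis of Theorem \ref{thm:pre_final} holds, yet $d(x_1,0)$ has no modulus of absolute continuity, so Theorem \ref{thm:mainQuant} cannot be invoked as stated.

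\emph{Remark \ref{rem:mainQuantStrong} does not rescue this either.} It needs the almost sure form of the approximate Fej\'er inequality, whereas you only have the resolved form with probability $<\lambda$. Moreover, its mechanism (Remark \ref{rem:optStopStrong}) uses the Doob decomposition, which again presupposes integrability.

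\emph{Truncation is the right fix}, but not ``on the event $\{\sup_n d(x_n,o)\leq b_\lambda\}$'': that event is not adapted, and restricting to it destroys the conditional inequality. Truncate along the filtration instead. With $B_n:=\{\max_{m\leq n}d(x_m,o)\leq b\}\in\mathcal{F}_n$ and $B_{n+1}\subseteq B_n$, put $Y^z_n:=G(d(x_n,z))\mathbf{1}_{B_n}$. Since $G\geq 0$,
\[
\EE[Y^z_{n+1}\mid\mathcal{F}_n]\leq\mathbf{1}_{B_n}\EE[G(d(x_{n+1},z))\mid\mathcal{F}_n]\leq(1+\chi_n)Y^z_n+\eta_n+\varepsilon
\]
wherever the approximate Fej\'er inequality holds. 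So $(Y^z_n)$ is a finitary Robbins--Siegmund process with the same data. It is bounded by $G(b+d)$ for all $z\in X_0$, hence has modulus $\varepsilon\mapsto\varepsilon/(G(b+d)+1)$, and $\EE[Y^z_0]<b_0$.

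You must then reopen the proof rather than cite Theorem \ref{thm:mainQuant}:
\begin{enumerate}
\item Use Theorem \ref{thm:finRS} for $Y^z$ in Lemma \ref{lem:fromRStoPsi}, with $b:=b_{\lambda/2}$.
\item Note that the proof of Lemma \ref{lem:unifLemma} already restricts to $\{\forall n\, d(x_n,o)\leq b_{\lambda/2}\}$, on which $Y^z_n=G(d(x_n,z))$ for every $z$ simultaneously.
\item Lemmas \ref{lem:pointtosetfluc}--\ref{lem:fromPsitoConv} and your Step 3 then go through unchanged.
\end{enumerate}
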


If we now assume that $X$ is complete and separable, then we can further guarantee that $(x_n)$ converges almost surely to a limit. To guarantee the feasibility of the solution in our abstract setup, we further require a suitable closedness property for $F$ relative to the $(AF_k)$. For that, we further follow the approach of \cite{KohlenbachLeusteanNicolae2018} and consider the abstract property that $F$ is explicitly closed w.r.t.\ $(AF_k)$, in the sense that
\[
\forall p\in X\left( \forall N,M\in\mathbb{N}\left(AF_M\cap \overline{B}_{1/(N+1)}(p)\neq\emptyset\right)\to p\in F\right).
\]
Under this assumption, we can then guarantee that the above limit is almost surely a solution, i.e.\ that it belongs to $F$, and hence derive the following extended result. In it, we also further reduce the property of uniform stochastic quasi-Fej\'er monotonicity to its ordinary counterpart, and following the discussion before Remark \ref{rem:uniformfinite} replace the assumption that $(d(x_n,o))$ is almost surely finite by the assumptions that $F\neq \emptyset$ and that $G$ is coercive.

\begin{theorem}\label{thm:final}
Let $(\Omega,\mathsf{F},\PP)$ be a probability space and let $X$ be a complete and separable metric space. Further, let $X_0\subseteq X$ be compact and let $\emptyset\neq F=\bigcap_{k\in\mathbb{N}}AF_k$ for a sequence $(AF_k)$ with $AF_{k+1}\subseteq AF_k\subseteq X_0$ for all $k\in\mathbb{N}$, such that $F$ is explicitly closed w.r.t.\ $(AF_k)$. Let $G$ be increasing, continuous and coercive with $G(0)=0$. Let $(x_n)$ be a sequence of $X$-valued random variables which is stochastically quasi-Fej\'er monotone w.r.t.\ $F$, $(AF_k)$ and a filtration $\mathcal{F}$ of $\mathsf{F}$, and which has the stochastic $\liminf$-property w.r.t.\ $F$ and $(AF_k)$. Assume that $G(d(x_0,o)+d)$ has finite mean. Then $x_n \to x$ almost surely, where $x\in F$ almost surely.
\end{theorem}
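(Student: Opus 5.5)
Theorem \ref{thm:final} will follow by reducing to Theorem \ref{thm:pre_final} and then identifying the limit. There are three preliminary checks.

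\textbf{Uniform quasi-Fej\'er monotonicity.} We show that the ordinary stochastic quasi-Fej\'er monotonicity w.r.t.\ $F$ upgrades to the uniform notion of Definition \ref{def:uniformstochasticFejer}. Suppose it fails. Then for some $\lambda>0$ and $r,n\in\NN$, every $k$ admits some $z_k\in AF_k$ and some $m_k\leq n$ with
\[
\PP\left(\EE[G(d(x_{m_k+1},z_k))\mid\mathcal{F}_{m_k}]>(1+\chi_{m_k})G(d(x_{m_k},z_k))+\eta_{m_k}+\tfrac{1}{r+1}\right)\geq\lambda.
\]
Passing to a subsequence, we may take $m_k=m$ constant, and by compactness of $X_0$ we may assume $z_k\to z$. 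Since $z_k\in AF_k\subseteq AF_M$ for $k\geq M$, explicit closedness gives $z\in F$. For fixed $\omega$ the map $z'\mapsto G(d(x_{m+1}(\omega),z'))$ is continuous, and it is dominated on the bounded set $X_0$ by $G(d(x_{m+1},o)+d)$. Conditional dominated convergence (integrability of that dominating function will need a small argument, or a truncation of $G$) then gives
\[
\EE[G(d(x_{m+1},z_k))\mid\mathcal{F}_m]\to\EE[G(d(x_{m+1},z))\mid\mathcal{F}_m]\quad\text{a.s.}
\]
The right-hand side $(1+\chi_m)G(d(x_m,z_k))+\eta_m$ converges pointwise as well. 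By Fatou applied to the indicators, for $k$ large the event in question is contained, up to a set of vanishing probability, in the event with $z$ and error $\tfrac{1}{2(r+1)}$. That event has probability zero by the Fej\'er property at $z\in F$, which contradicts $\geq\lambda$.

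\textbf{Uniform almost sure boundedness.} Fix $z\in F\neq\emptyset$. The classical Robbins--Siegmund theorem shows that $G(d(x_n,z))$ converges a.s. Coercivity of $G$ then gives $\sup_n d(x_n,o)<\infty$ a.s., as in the discussion preceding Remark \ref{rem:uniformfinite}. This is exactly uniform almost sure boundedness, and the existence of a modulus $b$ follows from Lemma \ref{lem:prenex}.

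\textbf{Convergence and identification of the limit.} Theorem \ref{thm:pre_final} now applies: $X_0$ is totally bounded, $G(d(x_0,o)+d)$ is integrable, and the remaining hypotheses were just verified. Hence $(x_n)$ is a.s.\ Cauchy. Since $X$ is complete, $x:=\lim x_n$ exists a.s., and it is measurable as a pointwise limit, using separability so that the Borel $\sigma$-algebra behaves well. To see $x\in F$ a.s., intersect the a.s.\ convergence event with the a.s.\ event from the stochastic $\liminf$-property (Definition \ref{def:stochasticLiminf}). On this intersection, for all $M,N$ there are infinitely many $n$ with $x_n\in AF_M$, and among them some $n$ has $d(x_n,x)\leq 1/(N+1)$. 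Hence $AF_M\cap\overline{B}_{1/(N+1)}(x)\neq\emptyset$ for all $M,N$, and explicit closedness gives $x\in F$.

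The main obstacle is the first step, namely the uniformization. Its delicate points are exchanging limits inside conditional expectations and in the probabilities of strict-inequality events. We handle this by taking the limit with the slack $\tfrac{1}{r+1}$ halved, and we secure the integrability needed for dominated convergence from the finite-mean hypothesis, if necessary by propagating it along $n$ via the Fej\'er inequality at a fixed $z\in F$.
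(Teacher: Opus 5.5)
Your architecture matches the paper's proof. It uses contradiction, pigeonhole on $m_k$, compactness and explicit closedness to upgrade to uniform monotonicity, then Theorem \ref{thm:pre_final}, then the $\liminf$-property with explicit closedness to place the limit in $F$. You also usefully make explicit the uniform almost sure boundedness that the paper only imports from the discussion before Remark \ref{rem:uniformfinite}.

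The gap is the step you flag yourself, the conditional dominated convergence with dominant $G(d(x_{m+1},o)+d)$, and your proposed remedy does not close it. Propagating integrability along $n$ via the Fej\'er inequality at some $p\in F$ fails for two reasons. First, $(\chi_n),(\eta_n)$ are only almost surely summable; already $\eta_0$ may be a non-integrable $\mathcal{F}_0$-measurable variable. Second, integrability of $G(d(x_{m+1},p))$ says nothing about the shifted quantity $G(d(x_{m+1},p)+2d)$ when $G$ grows fast.

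In fact, for the original $G$ the uniformization is false under the hypotheses of the theorem. Take $X=\RR$, $o=0$, $X_0=[-1,1]$ (bound $1$), $AF_k=[-\frac{1}{k+1},\frac{1}{k+1}]$, $F=\{0\}$ and $G(t)=e^{t^2}-1$. Let $\mathcal{F}_0$ be trivial, $x_0=0$, and for $n\geq 1$ let $x_n=2^{1-n}Y$ and $\mathcal{F}_n=\sigma(Y)$, where $Y\geq 0$ has density proportional to $e^{-t^2}/(1+t^2)$ on $[0,\infty)$. Then:
\begin{itemize}
\item the Fej\'er inequality at $0$ holds with $\eta_0=\EE[G(Y)]<\infty$ and all other error terms zero;
\item $x_n\to 0$ gives the $\liminf$-property;
\item $G(d(x_0,o)+1)=e-1$ has finite mean.
\end{itemize}
Yet $\EE[G(d(x_1,z))\mid\mathcal{F}_0]=+\infty$ for every $z<0$ (or is undefined, if one insists on integrable arguments). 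Hence Definition \ref{def:uniformstochasticFejer} fails at $n=m=0$ for every $k$, taking $z=-\frac{1}{k+1}\in AF_k$.

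The paper's own argument has the same weakness. It asserts $\vert G(d(x_{m+1},z))-G(d(x_{m+1},z_{k_j}))\vert\leq\frac{1}{j+1}$ uniformly on $\Omega$, which continuity of $G$ alone does not give when $d(x_{m+1},o)$ is unbounded.

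Your fallback, truncation, is the right repair, but it must be a change of $G$ throughout, not a device inside the limit argument only. For $M>0$ let $G_M:=\min\{G,M\}$. Two facts show the process is still stochastically quasi-Fej\'er monotone for $G_M$:
\begin{itemize}
\item $\EE[\min\{W,M\}\mid\mathcal{F}_n]\leq\min\{\EE[W\mid\mathcal{F}_n],M\}$ for $W\geq 0$;
\item $\min\{(1+\chi)a+\eta,M\}\leq(1+\chi)\min\{a,M\}+\eta$ for $a,\chi,\eta\geq 0$.
\end{itemize}
Moreover $G_M$ is increasing, bounded and uniformly continuous with $G_M(0)=0$. Also, $G_M(x)\leq\delta<M$ forces $G(x)\leq\delta$, so nothing is lost near $0$.

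Your argument then yields uniform monotonicity for $G_M$: conditional dominated convergence with the constant dominant $M$, then almost sure convergence implies convergence in probability, which passes to the event at $z$ with slack $\frac{1}{2(r+1)}$. Theorem \ref{thm:pre_final} is then applied to $G_M$, whose finite-mean hypothesis is now trivial. Coercivity of the original $G$ is used only in your boundedness step, via the classical Robbins--Siegmund theorem. With this modification the proof goes through. As written, the uniformization for $G$, and hence the application of Theorem \ref{thm:pre_final} to $G$, is unjustified.
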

\begin{proof}
We first show that, in the context of the present assumptions of compactness and explicit closedness, the sequence $(x_n)$ is in fact already uniformly stochastically quasi-Fej\'er monotone. Suppose for a contradiction that this is not the case, i.e.\ that there are $\lambda>0$ and $r,n\in\mathbb{N}$ such that for any $k\in\mathbb{N}$, there is a $z_k\in AF_k$ and an $m_k\leq n$ such that
\[
\PP\left(\EE[G(d(x_{m_k+1},z_k))\mid \mathcal{F}_{m_k}]> (1+\chi_{m_k})G(d(x_{m_k},z_k))+\eta_{m_k}+\frac{1}{r+1}\right)\geq \lambda.
\]
Using the compactness of $X_0$, $(z_k)$ has a convergent subsequence. W.l.o.g., we simply assume that $z_k\to z$, and by the infinite pigeonhole principle, we can further assume w.l.o.g.\ that $m_k=m$ is constant. Note first that since $z_k\in AF_k$ for all $k$, and since $z_k\to z$, we have $z\in F$ as $F$ is explicitly closed. Note now that
\[
\vert d(x_{m+1},z)-d(x_{m+1},z_k)\vert,\vert d(x_{m},z)-d(x_{m},z_k)\vert\leq d(z,z_k)
\]
so that $d(x_{m+1},z_k)\to d(x_{m+1},z)$ and $d(x_{m},z_k)\to d(x_{m},z)$, uniformly. In particular, as $G$ is continuous, let $k_j$ be such that
\[
\vert G(d(x_{m+1},z))-G(d(x_{m+1},z_{k_j}))\vert,\vert G(d(x_{m},z))-G(d(x_{m},z_{k_j}))\vert\leq \frac{1}{j+1}
\]
for all $j\in\mathbb{N}$, uniformly on $\Omega$. In particular, we have
\[
G(d(x_{m+1},z_{k_j}))\leq \frac{1}{j+1}+G(d(x_{m+1},z))\text{ and }G(d(x_{m},z))\leq \frac{1}{j+1}+G(d(x_{m},z_{k_j})).
\]
This thereby yields that if $\omega$ is such that
\[
\EE[G(d(x_{m+1},z_{k_j}))\mid \mathcal{F}_{m}](\omega)> (1+\chi_{m}(\omega))G(d(x_{m}(\omega),z_{k_j}))+\eta_{m}(\omega)+\frac{1}{r+1},
\]
then we also have
\[
-\EE[G(d(x_{m+1},z))\mid \mathcal{F}_{m}](\omega)+(1+\chi_{m}(\omega))G(d(x_{m}(\omega),z))+\eta_{m}(\omega)+\frac{1}{r+1}<\frac{2+\chi_{m}(\omega)}{j+1}.
\]
Denoting the left random variable $X_m$, we hence have
\begin{align*}
&\PP\left(X_m<\frac{2+\chi_m}{j+1}\right)\\
&\qquad\geq \PP\left(\EE[G(d(x_{m+1},z_{k_j}))\mid \mathcal{F}_{m}]> (1+\chi_{m})G(d(x_{m},z_{k_j}))+\eta_{m}+\frac{1}{r+1}\right)\geq \lambda.
\end{align*}
By assumption, the sum $\sum_{n=0}^\infty\chi_n$ is almost surely bounded, and so also $\chi_{m}$ is almost surely bounded. So, let $b_{\lambda}$ be an associated modulus of almost sure boundedness, that is $\PP(\chi_m>b_\lambda)<\lambda$ for all $\lambda>0$. Hence, using the Fr\'echet inequality, we have
\[
\PP\left(X_m<\frac{2+b_{\lambda/2}}{j+1}\right)\geq \PP\left(X_m<\frac{2+\chi_m}{j+1}\right)+\PP(\chi_m\leq b_{\lambda/2})-1\geq \lambda-\lambda/2=\lambda/2.
\]
However, we have
\[
\lim_{j\to 0}\PP\left(X_m<\frac{2+b_{\lambda/2}}{j+1}\right)=\PP(X_m\leq 0)
\]
as $\PP(X_m<\varepsilon)$ is right-continuous in $\varepsilon$. Now, however then
\[
\PP\left(\EE[G(d(x_{m+1},z))\mid \mathcal{F}_{m}]\geq (1+\chi_{m})G(d(x_{m},z))+\eta_{m}+\frac{1}{r+1}\right)=\PP(X_m\leq 0)\geq\lambda/2
\]
and so the stochastic quasi-Fej\'er monotonicity is violated on a set with positive probability. Hence $(x_n)$ is not stochastically quasi-Fej\'er monotone w.r.t.\ $F$, $(AF_k)$ and $\mathcal{F}$, a contradiction.

By Theorem \ref{thm:pre_final}, we thus get that $(x_n)$ is almost surely Cauchy. As $X$ is complete and separable, we have that $(x_n)$ almost surely converges to some random variable $x$. To show that $x\in F$ almost surely, note that $(x_n)$ has the stochastic $\liminf$-property w.r.t.\ $F$ and $(AF_k)$. Fix an $\omega$ such that $(x_n(\omega))$ satisfies 
\[
\forall k,N\in\mathbb{N} \exists n\geq N (x_n(\omega)\in AF_k)
\]
and simultaneously $x_n(\omega)\to x(\omega)$. Using explicit closedness gives $x(\omega)\in F$. As the set of all such $\omega$ has measure $1$, we get the result.
\end{proof}

\begin{remark}
The above Theorem \ref{thm:final} can in fact be proved rather directly without appealing to Theorem \ref{thm:mainQuant} by following the folklore approach towards convergence of (stochastic) quasi-Fej\'er monotone sequences (see e.g.\ \cite{CombettesPesquet2015} as well as \cite{Pischke2026}), suitably adapted to (compact) metric spaces. Indeed, if $(x_n)$ is stochastically quasi-Fej\'er monotone w.r.t.\ $F$, $(AF_k)$ and $\mathcal{F}$, then by the Robbins-Siegmund theorem, we have that $G(d(x_n,z))$ converges a.s.\ for all $z\in F$, say on a set $\Omega_z$ with measure $1$. By compactness, take $(z_n)$ dense in $F$ and denote by $\Omega'$ the set of measure $1$ on which the stochastic $\liminf$-property holds for $(x_n)$ w.r.t.\ $F$ and $(AF_k)$. Let $\omega\in \widehat{\Omega}:=\Omega'\cap\bigcap_{i\in\mathbb{N}}\Omega_{z_i}$. Then by $\omega\in\Omega'$, we have that there exists a subsequence $x_{n_k}$ such that $x_{n_k}(\omega)\in AF_k$ for all $k\in\mathbb{N}$. By compactness, let $x^*(\omega)$ be a limit point of that sequence. Note that by explicit closedness of $F$, we have $x^*(\omega)\in F$. As $(z_i)$ is dense in $F$, for any $\varepsilon>0$ there exists an $i\in\mathbb{N}$ with $d(x^*(\omega),z_i)<\varepsilon$. As $\omega\in\Omega_{z_i}$, we have that $G(d(x_n(\omega),z_i))$ converges and by coercivity, $d(x_n(\omega),z_i)$ converges. In particular, $d(x^*(\omega),z_i))$ is a cluster point of the latter, so that given $\varepsilon$, there is some $i\in\mathbb{N}$ with
\[
\lim_{n\to\infty}d(x_n(\omega),z_i)<\varepsilon.
\]
Thus, we have
\begin{align*}
\limsup_{n\to\infty} d(x_n(\omega),x^*(\omega))&\leq \limsup_{n\to\infty} d(x_n(\omega),z_i)+d(x^*(\omega),z_i)\\
&\leq \lim_{n\to\infty}d(x_n(\omega),z_i)+\varepsilon\\
&\leq 2\varepsilon.
\end{align*}
Hence, as $\varepsilon$ was arbitrary, we have $\lim_{n\to\infty} d(x_n(\omega),x^*(\omega))=0$.

The accomplishment of the above approach to Theorem \ref{thm:final} via Theorem \ref{thm:mainQuant} is thereby rather that it illustrates that Theorem \ref{thm:mainQuant} is a full finitization (in the sense of Tao, see \cite{Tao2008a}) of the result given in Theorem \ref{thm:final}.
\end{remark}

\begin{remark}\label{rem:unifClosed}
Similar to \cite{KohlenbachLeusteanNicolae2018} (see Theorem 5.3 therein), if the assumption that $F$ is explicitly closed is uniformly and quantitatively resolved via so-called moduli of uniform closedness, that is functions $\alpha_1,\alpha_2:\mathbb{N}\to\mathbb{N}$ such that 
\[
\forall k\in\mathbb{N}\ \forall z,z'\in X\left( z\in AF_{\alpha_1(k)}\cap \overline{B}_{1/(\alpha_2(k)+1)}(z')\to z'\in AF_k\right),
\]
then the previous quantitative convergence result given in Theorem \ref{thm:mainQuant} can be strengthened using these moduli to maintain that the process $(x_n)$ contains approximate solutions along the region of metastability. Concretely, write $\alpha(k):=\max\{\alpha_1(k),\alpha_2(k)\}$ and $\widehat{k}:=\max\{k,\alpha(k)\}$. Now, fix $\lambda>0$, $k\in\mathbb{N}$ and $g\in\mathbb{N}^\mathbb{N}$. Following the proof of Theorem \ref{thm:mainQuant} (and therefore that of Lemma \ref{lem:fromPsitoConv}) with $\widehat{k}$ instead of $k$, we get a set $A$ with $\PP(A^c)<\lambda$ such that, for any $\omega\in A$, there are numbers $n_I,n_J$ with $n_I\leq \Delta(\lambda,\widehat{k},g)$ and $x_{n_J}(\omega)\in AF_{{\widehat{k}}^\lambda_J}$
as well as
\[
d(x_i(\omega),x_{n_J}(\omega))\leq\frac{1}{2({\widehat{k}}+1)}
\]
for all $i\in [n_I;n_I+g(n_I)]$. Using that $\widehat{k}\geq k$, we can conclude from this as before that 
\[
\forall i,j\in [n_I;n_I+g(n_I)]\left(d(x_i(\omega),x_{j}(\omega))\leq\frac{1}{k+1}\right).
\]
Further however, note that 
\[
x_{n_J}(\omega)\in AF_{{\widehat{k}}^\lambda_J}\subseteq AF_{{\widehat{k}}^\lambda_0}\subseteq AF_{\widehat{k}}\subseteq AF_{\alpha_1(k)}
\]
and so as $d(x_i(\omega),x_{n_J}(\omega))\leq 1/(\alpha_2(k)+1)$ for all $i\in [n_I;n_I+g(n_I)]$ since $\widehat{k}\geq \alpha_2(k)$, we get $x_i(\omega)\in AF_k$ for any $i\in [n_I;n_I+g(n_I)]$. Hence, we have shown that
\[
\PP\left(\forall n\leq \Delta(\lambda,\widehat{k},g)\ \exists i,j\in [n;n+g(n)]\left(d(x_i,x_j)>\frac{1}{k+1}\text{ or }x_i\not\in AF_k\right)\right)\leq\PP(A)<\lambda,
\]
so that $x_i\in AF_k$ is maintained along the same region of metastability with high probability.
\end{remark}

\section{Application: Common fixed point problems}
\label{sec:common}

We end by outlining how the above results, both in their quantitative but also in their qualitative form, can be used to provide new convergence results for stochastic approximation procedures. We consider an example which incorporates many of the new notions we have introduced (while considerably simplifying others), namely a stochastic common fixed point problem and an associated randomized Krasnoselskii-Mann scheme, as e.g.\ recently investigated (in a broad context) over Hilbert spaces by Combettes and Madariaga \cite{CombettesMadariaga2025}.

Crucially however, our results are developed over the broad nonlinear context of separable Hadamard spaces, instead of just over Hilbert spaces, and our presentation is quite different, being tailored to the specifics setting of this paper in which we work over locally compact spaces, as opposed to assuming other regularity notions for guaranteeing convergence.

We begin by outlining our setup. As mentioned above, we will develop our results in the context of Hadamard spaces, that is complete geodesic metric spaces of nonpositive curvature in the sense of Alexandrov. These spaces, which uniformly cover examples such as Hilbert spaces, Hadamard manifolds (i.e.\ complete simply connected Riemannian manifolds of nonpositive sectional curvature) as well as $\mathbb{R}$-trees and beyond, frequently appear as one of the more canonical nonlinear generalizations of Hilbert spaces. We here only discuss the essential properties of these spaces as they are relevant to the present paper, and beyond that refer to \cite{AlexanderKapovitchPetrunin2023,Bacak2014a,BridsonHaefliger1999} for further exposition, each of which have a different style and focus.

For this purpose of this paper, a metric space $(X,d)$ is called (uniquely) geodesic if all two points are joined by a (unique) geodesic. In the uniquely geodesic case, given $x,y\in X$ and $\lambda\in [0,1]$, we write $(1-\lambda)x\oplus \lambda y$ for the unique point $z$ on the geodesic connecting $x$ and $y$ satisfying $d(x,z)=\lambda d(x,y)$ and $d(y,z)=(1-\lambda)d(x,y)$. A uniquely geodesic metric space $(X,d)$ is a $\mathrm{CAT}(0)$ space if it satisfies
\begin{equation}
d^2((1-\lambda)x\oplus \lambda y,z)\leq (1-\lambda)d^2(x,z)+\lambda d^2(y,z)-\lambda(1-\lambda)d^2(x,y),
\label{CN}\tag{CN}
\end{equation}
that is an extension of the so-called Bruhat-Tits $\mathrm{CN}$-inequality to geodesics. A complete $\mathrm{CAT}(0)$ space is called a Hadamard space. We will here rely on another characterization of $\mathrm{CAT}(0)$ spaces due to Berg and Nikolaev \cite{BergNikolaev2008}, using their so-called quasi-inner product defined by
\[
\ql{xy}{uv}:=\frac{1}{2}\left( d^2(x,v) + d^2(y,u) - d^2(x,u)-d^2(y,v) \right)
\]
for all $x,y,u,v\in X$, where we write $\overrightarrow{xy},\overrightarrow{uv}$ as a shorthand for pairs $(x,y),(u,v)\in X^2$. As shown in \cite{BergNikolaev2008}, already in a general metric space $(X,d)$, this represents the unique function $X^2\times X^2\to \RR$ that satisfies
\begin{enumerate}[(1)]
	\item $\ql{xy}{xy}=d^2(x,y)$,
	\item $\ql{xy}{uv}=\ql{uv}{xy}$,
	\item $\ql{xy}{uv}=-\ql{yx}{uv}$,
	\item $\ql{xy}{uv}+\ql{xy}{vw}=\ql{xy}{uw}$,
\end{enumerate}
for all $x,y,u,v,w\in X$. It then follows from the results in \cite{BergNikolaev2008} that a geodesic metric space $(X,d)$ is a $\mathrm{CAT}(0)$ space if, and only if, the following metric version of the Cauchy-Schwartz inequality holds for all $x,y,u,v\in X$:
\begin{equation}
\ql{xy}{uv}\leq d(x,y)d(u,v).\tag{CS}\label{CS}
\end{equation}
Beyond the above, we in the following will only require a few other standard observations about $\mathrm{CAT}(0)$ spaces, which we collected in the following lemma.

\begin{lemma}
Let $(X,d)$ be a $\mathrm{CAT}(0)$ space. Then
\begin{equation}
d^2(x,z)=d^2(x,y)+d^2(y,z)+2\ql{xy}{yz}
\label{Q}\tag*{$(\ast)_1$}
\end{equation}
for all $x,y,z\in X$, as well as
\begin{equation}
d((1-\lambda)x\oplus \lambda y,z)\leq (1-\lambda)d(x,z)+\lambda d(y,z)
\label{T}\tag*{$(\ast)_2$}
\end{equation}
for all $x,y,z\in X$ and $\lambda\in [0,1]$.
\end{lemma}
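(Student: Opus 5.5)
Both identities are direct consequences of the definition of the quasi-inner product together with the geometry of $\mathrm{CAT}(0)$ spaces, and I would prove them separately.

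For \ref{Q}, the plan is to unfold the definition of $\ql{xy}{yz}$. Substituting $u=y$ and $v=z$ gives
\[
2\ql{xy}{yz}=d^2(x,z)+d^2(y,y)-d^2(x,y)-d^2(y,z)=d^2(x,z)-d^2(x,y)-d^2(y,z),
\]
and rearranging yields exactly \ref{Q}. This step is purely algebraic and in fact holds in any metric space; no curvature assumption is needed.

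For \ref{T}, I would work with squared distances and start from \eqref{CN} with $w:=(1-\lambda)x\oplus\lambda y$. Dropping the nonpositive term $-\lambda(1-\lambda)d^2(x,y)$ gives
\[
d^2(w,z)\leq (1-\lambda)d^2(x,z)+\lambda d^2(y,z).
\]
However, the right-hand side is larger than $((1-\lambda)d(x,z)+\lambda d(y,z))^2$, since the difference equals $\lambda(1-\lambda)(d(x,z)-d(y,z))^2$, so convexity alone is not enough. I would therefore keep the term $-\lambda(1-\lambda)d^2(x,y)$ and use the triangle inequality $|d(x,z)-d(y,z)|\leq d(x,y)$. This gives
\[
\lambda(1-\lambda)(d(x,z)-d(y,z))^2\leq \lambda(1-\lambda)d^2(x,y),
\]
and hence
\[
d^2(w,z)\leq (1-\lambda)d^2(x,z)+\lambda d^2(y,z)-\lambda(1-\lambda)(d(x,z)-d(y,z))^2=\bigl((1-\lambda)d(x,z)+\lambda d(y,z)\bigr)^2.
\]
Taking square roots, which is legitimate since both sides are nonnegative, yields \ref{T}.

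The only mildly delicate point is the algebraic identity
\[
(1-\lambda)a^2+\lambda b^2-\lambda(1-\lambda)(a-b)^2=((1-\lambda)a+\lambda b)^2,
\]
with $a:=d(x,z)$ and $b:=d(y,z)$. I would check it by expanding both sides: each equals $(1-\lambda)^2a^2+2\lambda(1-\lambda)ab+\lambda^2b^2$. This is routine, so I expect no genuine obstacle in either part. Alternatively, \ref{T} could be cited as the standard convexity of the metric in $\mathrm{CAT}(0)$ spaces, but the derivation above is self-contained from \eqref{CN}.
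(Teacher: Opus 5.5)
Your proposal is correct. For $(\ast)_1$ you do what the paper calls ``a direct calculation'': you unfold the definition of $\ql{xy}{yz}$ with $u=y$, $v=z$. You are also right that no curvature assumption is used there.

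For $(\ast)_2$ your route differs from the paper's. The paper gives no argument; it cites the joint convexity of the metric in $\mathrm{CAT}(0)$ spaces (Proposition 1.1.5 in Ba\v{c}\'ak's book). That result states $d(\gamma(t),\eta(t))\leq (1-t)d(\gamma(0),\eta(0))+t\,d(\gamma(1),\eta(1))$ for geodesics $\gamma,\eta$, and $(\ast)_2$ is the special case where $\eta$ is constant at $z$.

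You instead derive this point case directly from (CN). You keep the term $-\lambda(1-\lambda)d^2(x,y)$ and bound it using $|d(x,z)-d(y,z)|\leq d(x,y)$. The identity $(1-\lambda)a^2+\lambda b^2-\lambda(1-\lambda)(a-b)^2=((1-\lambda)a+\lambda b)^2$ then finishes the argument, and both steps check out.

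The cited result is stronger, since it gives convexity in both arguments at once, but only the point case is needed here. Your argument is self-contained relative to the paper's own definition of $\mathrm{CAT}(0)$ spaces via (CN), whereas the citation defers to the reference's setup. It also makes explicit that the full (CN) inequality, and not merely the convexity of $d^2(\cdot,z)$ obtained by dropping its last term, is what gives convexity of $d(\cdot,z)$ itself, as you correctly observed.
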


The first of the above properties is the result of a direct calculation, the second immediately follows from the joint convexity of the metric in $\mathrm{CAT}(0)$ (see e.g.\ in Proposition 1.1.5 in \cite{Bacak2014a}). We hence omit the proofs.

\subsection{Common fixed points on separable Hadamard spaces}
\label{sec:common:spaces}

For the remainder of this section, we assume that $(X,d)$ is some proper Hadamard space (which is hence separable), and let $\mathcal{B}(X)$ denote the usual Borel $\sigma$-algebra on $X$. Beyond these, we assume an ambient probability space $(\Omega,\mathsf{F},\PP)$. Fixing some other measure space $(V,\mathcal{V})$, we let $(T_v)_{v\in V}$ be a family of nonexpansive mappings on $X$ i.e.
\[
d(T_vx,T_vy)\leq d(x,y)
\]
for all $x,y\in X$ and $v\in V$. We assume furthermore that the mapping $V\times X\to X$ given by $(v,x)\mapsto T_vx$ is $\mathcal{V}\otimes \mathcal{B}(X)/ \mathcal{B}(X)$-measurable. We now let $v:(\Omega,\mathsf{F},\PP)\to (V,\mathcal{V})$ be a $V$-valued random variable over our ambient probability space, and consider the following problem:
\[
\text{find some element of }F:=\{z\in X\mid T_vz=z\; \PP\text{-a.s.}\}.
\]
Throughout, we assume that $F\neq\emptyset$. Crucially, we can express $F$ equivalently by
\[
F:=\{z\in X\mid \EE[d^2(T_vz,z)]=0\},
\]
which motivates a sequence of approximating sets $(AF_k)$ defined by
\[
AF_k:=\left\{z\in X_0\large\mid \EE[d^2(T_vz,z)]\leq \frac{1}{k+1}\right\},
\]
where we already incorporate a closed set $X_0\subseteq X$, which will later be concretely instantiated in the context of our method.

Indeed, it is then immediate that $(AF_k)$ approximate $F$ in the sense that $AF_{k+1}\subseteq AF_k$ for all $k\in\NN$ and that $\bigcap_{k\in\NN}AF_k=F$. Further, also the following result is rather immediate:

\begin{lemma}
For any $k\in\mathbb{N}$, $AF_k$ is closed. In particular, $F$ is closed.
\end{lemma}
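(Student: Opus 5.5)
The plan is to show that the map $\Theta:X\to[0,\infty]$, $\Theta(z):=\EE[d^2(T_vz,z)]$, is lower semicontinuous (in fact continuous where finite). Then $AF_k=X_0\cap\Theta^{-1}([0,1/(k+1)])$ is closed as the intersection of the closed set $X_0$ with a sublevel set of a lower semicontinuous function. Closedness of $F=\bigcap_k AF_k$ then follows immediately, at least for the version of $F$ relative to $X_0$ used here. For the full $F=\{z\mid\Theta(z)=0\}$, the same argument applied to the sublevel set $\Theta^{-1}(\{0\})$, without intersecting with $X_0$, gives closedness directly.

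First I note that $\Theta$ is well defined. By the assumed joint measurability of $(v,x)\mapsto T_vx$, for fixed $z$ the map $\omega\mapsto T_{v(\omega)}z$ is an $X$-valued random variable. Hence $\omega\mapsto d^2(T_{v(\omega)}z,z)$ is a nonnegative random variable, and its expectation is defined in $[0,\infty]$.

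The key pointwise estimate uses nonexpansivity. For $z,z'\in X$ and every $\omega$,
\[
|d(T_{v}z,z)-d(T_{v}z',z')|\leq d(T_vz,T_vz')+d(z,z')\leq 2d(z,z').
\]
Now let $z_n\to z$ with $\Theta(z_n)\leq 1/(k+1)$ (and $z_n\in X_0$, so $z\in X_0$ since $X_0$ is closed). Pointwise in $\omega$ we then have $d^2(T_vz_n,z_n)\to d^2(T_vz,z)$. Fatou's lemma gives $\Theta(z)\leq\liminf_n\Theta(z_n)\leq 1/(k+1)$, so $z\in AF_k$.

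Alternatively, one can get a quantitative continuity statement avoiding Fatou. In $L^2(\PP)$, the bound above gives $\bigl|\sqrt{\Theta(z)}-\sqrt{\Theta(z')}\bigr|\leq 2d(z,z')$ by Minkowski's inequality. This shows $\sqrt{\Theta}$ is $2$-Lipschitz wherever finite, and moreover that finiteness at one point (e.g.\ at some element of $F\neq\emptyset$, where $\Theta=0$) propagates to all points. So $\Theta$ is in fact finite and continuous everywhere, which I will remark on since it is likely useful later (e.g.\ for moduli of uniform closedness). There is no real obstacle; the only point requiring care is the measurability of the integrand, which is handled by the joint measurability assumption.
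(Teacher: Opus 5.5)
Your proof is correct. Your main argument differs from the paper's, while your alternative essentially is the paper's argument.

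\textbf{Your main route.} You prove lower semicontinuity of $\Theta(z)=\EE[d^2(T_vz,z)]$ directly. The pointwise bound $|d(T_vz,z)-d(T_vz',z')|\leq 2d(z,z')$ gives pointwise convergence of the integrands, and Fatou's lemma then gives $\Theta(z)\leq\liminf_n\Theta(z_n)$.

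\textbf{The paper's route.} The paper squares $d(T_vz,z)\leq d(T_vz_n,z_n)+2d(z_n,z)$ and controls the cross term by Jensen, using $\EE[d(T_vz_n,z_n)]\leq (k+1)^{-1/2}$. This yields the explicit estimate
$\Theta(z)\leq \bigl((k+1)^{-1/2}+2d(z_n,z)\bigr)^2$, and letting $n\to\infty$ finishes the proof. That estimate is exactly your Minkowski bound $\sqrt{\Theta(z)}\leq\sqrt{\Theta(z_n)}+2d(z_n,z)$ with $\Theta(z_n)\leq 1/(k+1)$ inserted.

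\textbf{What each buys.} The Fatou argument is softer: it needs no finiteness of $\Theta$ and no integrability bookkeeping, but it produces no rate. The quantitative version gives an explicit modulus. The paper uses precisely this modulus in the next result, Lemma~\ref{lem:FExpClosed}: a point in $AF_{12k+11}$ within distance $1/12(k+1)$ of $z'$ forces $z'\in AF_k$. This fits the paper's proof-mining aim of resolving closedness uniformly, as in Remark~\ref{rem:unifClosed}.

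\textbf{Your remark on $F$.} This is also apt. Since $AF_k\subseteq X_0$, the intersection $\bigcap_k AF_k$ is really $F\cap X_0$. So the paper's ``in particular'' literally gives closedness of $F\cap X_0$. Your direct argument via the zero set of $\Theta$, or the paper's estimate run without intersecting with $X_0$, gives closedness of $F$ itself.
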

\begin{proof}
Fix $k\in\mathbb{N}$, and let $(z_n)\subseteq AF_k$ with $z_n\to z$. Note that almost surely, we then have $d(T_vz,z)\leq d(T_vz_n,z_n)+2d(z_n,z)$ and so
\[
d^2(T_vz,z)\leq d^2(T_vz_n,z_n)+4d(z_n,z)d(T_vz_n,z_n)+4d^2(z_n,z)
\]
for all $n\in\mathbb{N}$. Now, note that $\EE[d^2(T_vz_n,z_n)]\leq \frac{1}{k+1}$ and so $\EE[d(T_vz_n,z_n)]\leq \sqrt{\frac{1}{k+1}}$. Applying expectations, we get
\begin{align*}
\EE[d^2(T_vz,z)]&\leq \EE[d^2(T_vz_n,z_n)]+4d(z_n,z)\EE[d(T_vz_n,z_n)]+4d^2(z_n,z)\\
&\leq \frac{1}{k+1}+4d(z_n,z)\sqrt{\frac{1}{k+1}}+4d^2(z_n,z).
\end{align*}
Taking limits yields $\EE[d^2(T_vz,z)]\leq \frac{1}{k+1}$ and so $z\in AF_k$.
\end{proof}

In particular, we thereby get that all $AF_k$ and also $F$ is measurable. Further, we also get that $F$ is explicitly closed w.r.t.\ $(AF_k)$ in the following particularly uniform way (recall Remark \ref{rem:unifClosed}):

\begin{lemma}\label{lem:FExpClosed}
Fix $k\in\mathbb{N}$. For any $z,z'\in X$, if $z\in AF_{12k+11}\cap\overline{B}_{1/12(k+1)}(z')$, then $z'\in AF_k$.
\end{lemma}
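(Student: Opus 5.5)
The plan is to bound $\EE[d^2(T_vz',z')]$ through $z$, using nonexpansiveness pointwise in $v$ and the triangle inequality. We also need $z'\in X_0$ for membership in $AF_k$. I will assume $X_0$ is whatever closed set is fixed later, and note that this membership is either automatic or needs the setup to ensure it, e.g.\ $X_0=X$ or a convex closed set containing the relevant ball. I will treat it as given and focus on the quantitative estimate.

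Fix $\delta:=d(z,z')\leq 1/(12(k+1))$ and write $a:=d(T_vz,z)$, a random variable with $\EE[a^2]\leq 1/(12k+12)=1/(12(k+1))$. Pointwise (for every value of $v$),
\[
d(T_vz',z')\leq d(T_vz',T_vz)+d(T_vz,z)+d(z,z')\leq 2\delta+a,
\]
using that $T_v$ is nonexpansive. Squaring gives $d^2(T_vz',z')\leq a^2+4\delta a+4\delta^2$. Taking expectations and applying Jensen (or Cauchy--Schwarz) in the form $\EE[a]\leq\sqrt{\EE[a^2]}$ yields
\[
\EE[d^2(T_vz',z')]\leq \EE[a^2]+4\delta\sqrt{\EE[a^2]}+4\delta^2 .
\]
This is the same computation as in the preceding closedness lemma.

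It remains to plug in the bounds. Set $\varepsilon:=1/(k+1)\leq 1$. Then $\EE[a^2]\leq \varepsilon/12$, $\sqrt{\EE[a^2]}\leq\sqrt{\varepsilon/12}\leq \sqrt{\varepsilon}$ and $\delta\leq \varepsilon/12$. Hence
\[
\EE[d^2(T_vz',z')]\leq \frac{\varepsilon}{12}+\frac{4\varepsilon}{12}\sqrt{\varepsilon}+\frac{4\varepsilon^2}{144}\leq \frac{\varepsilon}{12}+\frac{4\varepsilon}{12}+\frac{\varepsilon}{36}<\varepsilon,
\]
using $\varepsilon\leq 1$. So $\EE[d^2(T_vz',z')]\leq 1/(k+1)$, i.e.\ $z'\in AF_k$ (given $z'\in X_0$).

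The only real obstacle is the bookkeeping about $X_0$. The arithmetic is loose: the constant $12$ leaves ample slack, so no sharper estimate is needed.
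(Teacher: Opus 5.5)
Your proof is correct and follows the paper's argument: the pointwise bound $d(T_vz',z')\leq d(T_vz,z)+2d(z,z')$ from nonexpansiveness, squaring, taking expectations with $\EE[d(T_vz,z)]\leq\sqrt{\EE[d^2(T_vz,z)]}$, and then bounding each term by a multiple of $1/(12(k+1))$. Your caveat about $z'\in X_0$ is well taken: the paper's proof also only establishes $\EE[d^2(T_vz',z')]\leq 1/(k+1)$ and never addresses membership in $X_0$, which is not automatic for $X_0=\overline{B}_c(p)$ (it is harmless for the explicit closedness used in Theorem~\ref{thm:final}, because $X_0$ is closed).
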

\begin{proof}
Almost surely, we have $d(T_vz',z')\leq d(T_vz,z)+2d(z',z)$ as before. Also as before, we have $\EE[d(T_vz,z)]\leq \sqrt{\frac{1}{12(k+1)}}$. Squaring and taking expectations yields
\begin{align*}
\EE[d^2(T_vz',z')]&\leq \EE[d^2(T_vz,z)]+4d(z',z)\EE[d(T_vz,z)]+4d^2(z',z)\\
&\leq \frac{1}{12(k+1)}+\frac{4}{12(k+1)}+\frac{4}{12(k+1)}\leq\frac{1}{k+1}
\end{align*}
which gives $z'\in AF_k$.
\end{proof}

\begin{remark}
We can define a sequence of sets 
\[
AF'_k:=\left\{z\in X_0\large\mid \PP\left(d^2(T_vz,z)\geq \frac{1}{k+1}\right)\leq \frac{1}{k+1}\right\}.
\]
which similarly approximate $F$, and which simultaneously stays closer in character to the phrasing of $F$ as an almost-sure equation. Indeed, we have
\[
\PP\left(d^2(T_{v}z,z)\geq \frac{1}{k+1}\right)\leq (k+1)\EE[d^2(T_{v}z,z)]
\]
using Markov's inequality, so that $AF_{k^2+2k}\subseteq AF'_k$.

Conversely, assume $X_0$ has bounded diameter and let $c\in\mathbb{N}^*$ be a bound on this diameter. Using $F\neq\emptyset$, let $p\in F$. Note that then $d(T_vz,z)\leq d(T_vp,p)+2d(p,z)\leq 2c$ for all $z\in X_0$. We then have $AF'_{\rho(k)}\subseteq AF_k$, with $\rho(k):=\max\{2k+1,4c^2(2k+2)-1\}$. Indeed, assume that $z\in AF'_{\rho(k)}$, that is
\[
\PP\left(d^2(T_{v}z,z)\geq \frac{1}{\rho(k)+1}\right)\leq \frac{1}{\rho(k)+1}.
\]
Denote the inner set by $A$. Then we have $\EE[d^2(T_{v}z,z)\mathbf{1}_{A}]\leq 4c^2\PP(A)\leq\frac{1}{2(k+1)}$, as well as $\EE[d(T_{v}z,z)\mathbf{1}_{A^c}]\leq \frac{1}{2(k+1)}$, so that combined we have
\[
\EE[d(T_{v}z,z)]=\EE[d(T_{v}z,z)\mathbf{1}_{A}]+\EE[d(T_{v}z,z)\mathbf{1}_{A^c}]\leq \frac{1}{k+1},
\]
that is $z\in AF_k$.

However, the above formulation of $(AF_k)$ using the mean proves to be considerably more productive for the method studied in the following section, so that we have opted for that specific representation.
\end{remark}

\subsection{A Krasnoselskii-Mann scheme for approximating common fixed points} We now consider a simple Krasnoselskii-Mann scheme for approximating elements of $F$. Let $(x_n)$ be a sequence of $X$-valued random variables defined by
\begin{equation}
x_{n+1}:=(1-\lambda_n)x_n\oplus \lambda_n T_{v_n}x_n
\label{sM}\tag{sKM}
\end{equation}
for some $x_0\in X$, where $(\lambda_n)\subseteq (0,1]$ and $(v_n)$ is a sequence of independent $V$-valued random variables distributed as $v$. In particular, $v_n$ is independent of $\mathcal{F}_n:=\sigma(x_0,\ldots,x_n)$ for all $n\in\NN$. As geodesics are continuous in their endpoints (see e.g.\ Lemma 1.2.2 in \cite{Bacak2014a}), it immediately follows by induction that each $x_n$ is measurable.

As before, we assume that $F\neq \emptyset$ and now fix some $p\in F$, choosing $c\geq 1$ to satisfy $d(x_0,p)\leq c$.

\begin{lemma}
\label{lem:MannBounded}
For any $z\in X_0$ and $n\in\NN$ we have $d(x_n,p)\leq c$ almost surely.
\end{lemma}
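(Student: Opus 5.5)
The plan is an induction on $n$ showing that $d(x_n,p)\le d(x_0,p)$ holds pointwise on a set of full measure. The quantifier over $z\in X_0$ in the statement plays no role, since the bound concerns $p$ only. For $n=0$ the claim is just the choice of $c$.

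For the inductive step, let $\Omega_p$ be the event $\bigcap_{n\in\NN}\{T_{v_n}p=p\}$. Each event $\{T_{v_n}p=p\}$ has probability one, because $v_n$ is distributed as $v$ and $p\in F$ means $T_vp=p$ almost surely. This event is measurable by the joint measurability of $(v,x)\mapsto T_vx$, with $x=p$ fixed. A countable intersection of full-measure events has full measure, so $\PP(\Omega_p)=1$.

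Now fix $\omega\in\Omega_p$ and apply $(\ast)_2$ with $z=p$:
\[
d(x_{n+1}(\omega),p)\le (1-\lambda_n)d(x_n(\omega),p)+\lambda_n d(T_{v_n(\omega)}x_n(\omega),p).
\]
Since $T_{v_n(\omega)}p=p$, nonexpansiveness gives
\[
d(T_{v_n(\omega)}x_n(\omega),p)=d(T_{v_n(\omega)}x_n(\omega),T_{v_n(\omega)}p)\le d(x_n(\omega),p).
\]
Substituting, we get $d(x_{n+1}(\omega),p)\le d(x_n(\omega),p)\le c$ by the induction hypothesis. Hence on $\Omega_p$ the sequence $(d(x_n,p))$ is nonincreasing and bounded by $c$ for all $n$ simultaneously. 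This gives the claim, and even the stronger uniform-in-$n$ almost sure statement.

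The only delicate point is to work on the single full-measure set $\Omega_p$ rather than arguing "almost surely" separately at each step. The fixed-point property is only known almost surely for each individual $v_n$, and a fixed $\omega$ must satisfy it for all $n$ at once. Beyond that, the argument uses only the convexity property $(\ast)_2$ and nonexpansiveness.
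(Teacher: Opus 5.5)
Your proposal is correct and follows essentially the same route as the paper: apply $(\ast)_2$ with $z=p$, then use nonexpansiveness together with $T_{v_n}p=p$ almost surely (the paper phrases this via the triangle inequality $d(T_{v_n}x_n,p)\leq d(T_{v_n}x_n,T_{v_n}p)+d(T_{v_n}p,p)$), and induct on $n$. The only difference is that you explicitly work on the single full-measure event $\bigcap_n\{T_{v_n}p=p\}$ and justify its measurability, which the paper leaves implicit.
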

\begin{proof}
For $n\in\NN$, since $v_n$ is distributed as $v$ and $p\in F$, we have $d(T_{v_n}p,p)=0$ almost surely. Now, using \ref{T}, it follows that
\begin{align*}
d(x_{n+1},p)&\leq (1-\lambda_n)d(x_n,p)+\lambda_n d(T_{v_n}x_n,p)\\
&\leq (1-\lambda_n)d(x_n,p)+\lambda_n \left(d(T_{v_n}x_n,T_{v_n}p)+d(T_{v_n}p,p)\right)\\
&\leq d(x_n,p)
\end{align*}
so that $d(x_n,p)\leq d(x_0,p)\leq c$ for any $n\in\NN$, almost surely.
\end{proof}

In the following, we will thereby work over the associated space $X_0=\overline{B}_{c}(p)$, which is compact as $X$ is proper. In particular, note that we have $d(x_n,z)\leq 2c$ for any $z\in X_0$ and $n\in\NN$.

\begin{remark}\label{rem:KMAbsCont}
Note that we by Lemma \ref{lem:MannBounded} in particular have that $\mu(\varepsilon):=\varepsilon/4c^2$ is a modulus of absolute continuity for any $d^2(x_n,z)$, since
\[
\EE[d^2(x_n,z)\mathbf{1}_A]\leq 4c^2\PP(A)\leq \varepsilon
\]
for any $A\in\mathcal{F}$ with $\PP(A)<\varepsilon/4c^2$. Further, we can construct a modulus of almost sure boundedness for $(d(x_n,p))$ by setting $b_\lambda:=c$, as we have
\[
\PP\left(\exists n\in\mathbb{N}\left( d(x_n,p)>c\right)\right)=0<\lambda
\]
for any $\lambda>0$.
\end{remark}

\begin{lemma}\label{lem:CN}
For any $n\in\NN$ and $z\in X$ we have
\begin{align*}
&\EE\left[d^2(x_{n+1},z)\mid\mathcal{F}_n\right]\\
&\qquad\leq (1-\lambda_n)d^2(x_n,z)+\lambda_n\EE\left[d^2(T_{v_n}x_n,z)\mid\mathcal{F}_n\right]-\lambda_n(1-\lambda_n)\EE\left[d^2(x_n,T_{v_n}x_n)\mid\mathcal{F}_n\right].
\end{align*}
\end{lemma}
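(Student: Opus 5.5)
The plan is to apply the inequality \eqref{CN} pointwise and then take conditional expectations. For each $\omega$, the definition \eqref{sM} gives $x_{n+1}(\omega)=(1-\lambda_n)x_n(\omega)\oplus\lambda_n T_{v_n(\omega)}x_n(\omega)$. Using \eqref{CN} with $x=x_n(\omega)$, $y=T_{v_n(\omega)}x_n(\omega)$ and the fixed point $z$, we get the pointwise bound
\[
d^2(x_{n+1},z)\leq (1-\lambda_n)d^2(x_n,z)+\lambda_n d^2(T_{v_n}x_n,z)-\lambda_n(1-\lambda_n)d^2(x_n,T_{v_n}x_n)
\]
everywhere on $\Omega$. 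Since $(\lambda_n)$ is a deterministic sequence in $(0,1]$, the coefficients are constants.

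Next we take $\EE[\,\cdot\mid\mathcal{F}_n]$ of both sides. Conditional expectation is monotone and linear, and $d^2(x_n,z)$ is $\mathcal{F}_n$-measurable, so the stated inequality follows almost surely. Here we use $\mathcal{F}_n=\sigma(x_0,\ldots,x_n)$.

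The main obstacle is checking that every conditional expectation is well defined. This requires measurability and integrability of $d^2(T_{v_n}x_n,z)$ and $d^2(x_n,T_{v_n}x_n)$.

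For measurability, the map $(v,x)\mapsto T_vx$ is $\mathcal{V}\otimes\mathcal{B}(X)$-measurable and $\omega\mapsto(v_n(\omega),x_n(\omega))$ is measurable, so $T_{v_n}x_n$ is measurable. Continuity of $d$ on the separable space $X$ then makes the squared distances measurable.

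For integrability, all terms are nonnegative, so conditional expectations exist in $[0,\infty]$ in any case. The bounds make them finite anyway. By Lemma \ref{lem:MannBounded}, $d(x_n,p)\leq c$ almost surely. Nonexpansiveness together with $T_{v_n}p=p$ almost surely gives $d(T_{v_n}x_n,p)\leq c$. Hence all the distances involved are bounded by $c+d(p,z)$ or by $2c$.

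If one wants to avoid invoking Lemma \ref{lem:MannBounded} here, it suffices to work with nonnegative extended-valued conditional expectations. Either way, the inequality survives conditioning, which completes the argument.
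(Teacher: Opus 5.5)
Your proposal is correct and is the paper's proof: apply \eqref{CN} pointwise with $x=x_n$, $y=T_{v_n}x_n$, and then take $\EE[\,\cdot\mid\mathcal{F}_n]$, using that $d^2(x_n,z)$ is $\mathcal{F}_n$-measurable. Your extra measurability and integrability checks (via Lemma \ref{lem:MannBounded} and nonexpansiveness) are sound details that the paper leaves implicit. The only caveat is your optional extended-valued fallback, which would need the subtracted term moved to the left-hand side to avoid $\infty-\infty$; your boundedness argument makes that fallback unnecessary.
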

\begin{proof}
By \eqref{CN} we have
\[
d^2(x_{n+1},z)\leq (1-\lambda_n)d^2(x_n,z)+\lambda_nd^2(T_{v_n}x_n,z)-\lambda_n(1-\lambda_n)d^2(x_n,T_{v_n}x_n)
\]
and the result then follows by taking conditional expectations and using that $d(x_n,z)$ is $\mathcal{F}_n$-measurable. 
\end{proof}

\begin{lemma}\label{lem:Fejer}
The scheme $(x_n)$ satisfies
\[
\forall r,n\in\NN\ \forall z\in AF_{64c^2(r+1)}\ \forall m\leq n\left(\EE\left[d^2(x_{m+1},z)\mid\mathcal{F}_m\right]\leq d^2(x_m,z)+\frac{1}{r+1}\right).
\]
and is therefore uniformly stochastic quasi-Fej\'er monotone w.r.t. $F$, $(AF_k)$ and $(\mathcal{F}_n)$, and more specifically for $G(a):=a^2$, $\chi_m:=\eta_m:=0$, and $\zeta(\lambda,r,n):=64c^2(r+1)^2$.
\end{lemma}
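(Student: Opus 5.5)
The plan is to start from Lemma \ref{lem:CN} and estimate the term $\EE[d^2(T_{v_m}x_m,z)\mid\mathcal{F}_m]$ by $d^2(x_m,z)$ plus a small error that is controlled by how close $z$ is to being a common fixed point. The bound will hold almost surely for every $m$, so the restriction $m\leq n$ is harmless. The probabilistic requirement in Definition \ref{def:stochasticFejerMod} is then met trivially, since the exceptional event has probability $0<\lambda$.

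\textbf{Reduction via Lemma \ref{lem:CN}.} Fix $m$ and $z\in AF_k$, with $k$ to be chosen. Since $\lambda_m\in(0,1]$, the last term in Lemma \ref{lem:CN} is nonpositive and can be dropped, which gives
\[
\EE[d^2(x_{m+1},z)\mid\mathcal{F}_m]\leq (1-\lambda_m)d^2(x_m,z)+\lambda_m\EE[d^2(T_{v_m}x_m,z)\mid\mathcal{F}_m].
\]
It therefore suffices to show that $\EE[d^2(T_{v_m}x_m,z)\mid\mathcal{F}_m]\leq d^2(x_m,z)+\frac{1}{r+1}$.

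\textbf{Pointwise estimate.} By nonexpansiveness,
\[
d(T_{v_m}x_m,z)\leq d(T_{v_m}x_m,T_{v_m}z)+d(T_{v_m}z,z)\leq d(x_m,z)+d(T_{v_m}z,z).
\]
Squaring this gives
\[
d^2(T_{v_m}x_m,z)\leq d^2(x_m,z)+2d(x_m,z)\,d(T_{v_m}z,z)+d^2(T_{v_m}z,z).
\]
Because $z\in AF_k\subseteq X_0=\overline{B}_c(p)$, Lemma \ref{lem:MannBounded} yields $d(x_m,z)\leq 2c$ almost surely.

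\textbf{Conditional expectation.} Since $v_m$ is independent of $\mathcal{F}_m$, is distributed as $v$, and $z$ is deterministic, we have $\EE[d^j(T_{v_m}z,z)\mid\mathcal{F}_m]=\EE[d^j(T_vz,z)]$ for $j=1,2$. Hence
\[
\EE[d^2(T_{v_m}x_m,z)\mid\mathcal{F}_m]\leq d^2(x_m,z)+4c\sqrt{\EE[d^2(T_vz,z)]}+\EE[d^2(T_vz,z)],
\]
where Jensen's inequality was used for the middle term, and $d(x_m,z)$ could be pulled out of the conditional expectation because it is $\mathcal{F}_m$-measurable.

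\textbf{Choice of $k$.} This is the step that needs care. If $k$ is taken linear in $r$, the square-root term only yields a bound of order $1/\sqrt{r+1}$. So we take $k=64c^2(r+1)^2$, matching the stated $\zeta$ (the index $64c^2(r+1)$ in the displayed formula appears to be a typo for this). Then $\EE[d^2(T_vz,z)]\leq \frac{1}{64c^2(r+1)^2}$, so $4c\sqrt{\cdot}\leq\frac{1}{2(r+1)}$, and the last term is at most $\frac{1}{64(r+1)}$ since $c\geq 1$. The total error is therefore at most $\frac{1}{r+1}$.

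Substituting back gives $\EE[d^2(x_{m+1},z)\mid\mathcal{F}_m]\leq d^2(x_m,z)+\frac{\lambda_m}{r+1}\leq d^2(x_m,z)+\frac{1}{r+1}$ almost surely. This is uniform stochastic quasi-Fej\'er monotonicity with $G(a)=a^2$, $\chi_m=\eta_m=0$ and $\zeta(\lambda,r,n)=64c^2(r+1)^2$.
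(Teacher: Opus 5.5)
Your proposal is correct and follows the paper's proof essentially step for step. You square the triangle inequality combined with nonexpansiveness, bound $d(x_m,z)\leq 2c$ via Lemma \ref{lem:MannBounded}, use the independence of $v_m$ from $\mathcal{F}_m$ together with Jensen's inequality, and substitute into Lemma \ref{lem:CN} using $\lambda_m\leq 1$. You are also right that the index $64c^2(r+1)$ in the display is a typo for $64c^2(r+1)^2$: the paper's own proof fixes $z\in AF_{64c^2(r+1)^2}$, for exactly the square-root reason you give.
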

\begin{proof}
For any $m\in\NN$ and $z\in X_0$, we first observe that we have
\begin{align*}
d^2(T_{v_m}x_m,z)&\leq d^2(T_{v_m}x_m,T_{v_m}z)+d^2(T_{v_m}z,z)+2d(T_{v_m}x_m,T_{v_m}z)d(T_{v_m}z,z)\\
&\leq d^2(x_m,z)+d^2(T_{v_m}z,z)+2d(x_m,z)d(T_{v_m}z,z)\\
&\leq d^2(x_m,z)+d^2(T_{v_m}z,z)+4cd(T_{v_m}z,z),
\end{align*}
where the last line follows from Lemma \ref{lem:MannBounded}. Therefore
\begin{align*}
\EE\left[d^2(T_{v_m}x_m,z)\mid\mathcal{F}_m\right]&\leq d^2(x_m,z)+\EE\left[d^2(T_{v_m}z,z)\mid\mathcal{F}_m\right]+4c\EE\left[d(T_{v_m}z,z)\mid\mathcal{F}_m\right]\\
&=d^2(x_m,z)+\EE\left[d^2(T_{v}z,z)\right]+4c\EE\left[d(T_{v}z,z)\right]
\end{align*}
for the second inequality using that $v_m$ is independent of $\mathcal{F}_m$ and distributed as $v$. By Lemma \ref{lem:CN} we therefore have
\begin{align*}
\EE\left[d^2(x_{m+1},z)\mid\mathcal{F}_m\right]&\leq (1-\lambda_m)d(x_m,z)+\lambda_m\EE\left[d^2(T_{v_m}x_m,z)\mid\mathcal{F}_m\right]\\
&\leq d^2(x_m,z)+\lambda_m\left(\EE\left[d^2(T_{v}z,z)\right]+4c\EE\left[d(T_{v}z,z)\right]\right)\\
&\leq d^2(x_m,z)+\EE\left[d^2(T_{v}z,z)\right]+4c\EE\left[d(T_{v}z,z)\right]
\end{align*}
for any $m\in\NN$ and $z\in X_0$, using $\lambda_m\leq 1$ in the last line. Now fix $r,n\in\NN$, $z\in AF_{64c^2(r+1)^2}$ and $m\leq n$. Using Jensen's inequality, it follows that
\[
\EE\left[d^2(T_{v}z,z)\right]\leq \frac{1}{2(r+1)}\text{ and }\EE\left[d(T_{v}z,z)\right]\leq \frac{1}{8c(r+1)}
\]
from which the result follows.
\end{proof}

\begin{remark}
Our scheme is stochastically quasi-Fej\'er monotone in an extremely simple way, where in particular the error sequences $(\chi_m),(\eta_m)$ disappear and the modulus $\zeta(\lambda,r,n)$ depends only on $r$, and indeed we deliberately chose an example through which our main results could be illustrated in a simplified setting. However, even in this simple setting, there are small extensions that would lead to more elaborate versions of the corresponding quasi-Fej\'er property. For example, one could study an extension of our scheme with noise terms $e_n$:
\[
x_{n+1}:=(1-\lambda_n)x_n\oplus \lambda_ny_n \ \ \ \mbox{where} \ \ \ d(y_n,T_{v_n}x_n)\leq e_n\AS
\]
A simple instance of this scheme in a Hilbert space and for non-random mappings, given via
\[
x_{n+1}:=x_n+ \lambda_n(Tx_n+e_n-x_n)
\]
was considered in \cite{CombettesPesquet2015} in the presence of compactness, giving rise to a stochastic quasi-Fej\'er property in which the noise terms are now incorporated into the sequence $(\eta_m)$. Another form of generality would be obtained by weaking the i.i.d.\ assumption of the $(v_n)$ to
\[
\forall n\in\mathbb{N}\ \forall z\in X\left(z\in F\to \EE\left[d^2(T_{v_n}z,z)\mid\mathcal{F}_n\right]=0\right),
\]
and then defining the $(AF_k)$ implicitly through a quantitative rendering of that property, namely an abstract modulus $\rho$ satisfying
\[
\forall n,k\in\mathbb{N}\ \forall z\in X\left(z\in AF_{\rho(n,k)}\to \EE\left[d^2(T_{v_n}z,z)\mid\mathcal{F}_n\right]\leq\frac{1}{k+1}\right).
\]
A simple adaptation of Lemma \ref{lem:Fejer} would then establish uniform stochastic quasi-Fej\'er monotonicity of $(x_n)$ where now $\zeta(\lambda,r,n):=\rho(n,64c^2(r+1)^2)$ is also dependent on $n$. However, we leave a proper study of various generalisations of the scheme discussed here, along with further concrete examples of our framework, to future work.
\end{remark}

Having shown that the scheme $(x_n)$ is stochastically quasi-Fej\'er monotone, we now show how to define an explicit $\liminf$-modulus $\Phi$, under the additional (standard) assumption that the step-sizes $(\lambda_n)$ satisfy $\sum_{n=0}^\infty \lambda_n(1-\lambda_n)=\infty$. Quantitatively, this assumption is resolved using a rate of divergence $\theta:\NN\times (0,\infty)\to\NN$, i.e.
\[
\forall N\in\NN\ \forall b>0\left(\sum_{n=N}^{\theta(N,b)}\lambda_n(1-\lambda_n)\geq b\right).
\]
We begin by identifying a simpler $\liminf$ property, widely seen in the convergence analysis of Krasnoselskii-Mann-type schemes:
\begin{lemma}
\label{lem:MannLiminfSimple}
Define $\Psi:(0,\infty)\times \NN\to \NN$ by
\[
\Psi(\mu,N):=\theta\left(N,\frac{c^2+1}{\mu}\right).
\]
Then we have
\[
\forall \mu>0\ \forall N\in\mathbb{N}\ \exists n\in [N;\Psi(\mu,N)]\left(\EE\left[\int_\Omega d^2(T_{v(\omega)}x_n,x_n)\,\PP(d\omega)\right]<\mu\right).
\]
\end{lemma}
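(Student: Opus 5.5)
We take $z:=p\in F$ in Lemma \ref{lem:CN}, derive a descent inequality for $\EE[d^2(x_n,p)]$ with a negative term weighted by $\lambda_n(1-\lambda_n)$, telescope it over $[N;\Psi(\mu,N)]$, and argue by contradiction using the rate of divergence $\theta$.

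\textbf{Step 1 (one-step descent).} Since $v_n$ is independent of $\mathcal{F}_n$ and $x_n$ is $\mathcal{F}_n$-measurable, the freezing property of conditional expectation gives
\[
\EE[d^2(x_n,T_{v_n}x_n)\mid\mathcal{F}_n]=\int_\Omega d^2(T_{v(\omega)}x_n,x_n)\,\PP(d\omega)=:D_n .
\]
This uses the joint measurability of $(v,x)\mapsto T_vx$.

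\textbf{Step 2 (removing the middle term).} By nonexpansiveness and $T_{v_n}p=p$ almost surely, $d(T_{v_n}x_n,p)\leq d(x_n,p)$ almost surely. Hence
\[
\EE[d^2(T_{v_n}x_n,p)\mid\mathcal{F}_n]\leq d^2(x_n,p).
\]

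\textbf{Step 3 (combining and telescoping).} Inserting Steps 1 and 2 into Lemma \ref{lem:CN} with $z=p$ yields
\[
\EE[d^2(x_{n+1},p)\mid\mathcal{F}_n]\leq d^2(x_n,p)-\lambda_n(1-\lambda_n)D_n .
\]
Taking expectations and summing from $n=N$ to $M:=\Psi(\mu,N)$, we get
\[
\sum_{n=N}^{M}\lambda_n(1-\lambda_n)\EE[D_n]\leq \EE[d^2(x_N,p)]-\EE[d^2(x_{M+1},p)]\leq c^2 ,
\]
where Lemma \ref{lem:MannBounded} gives $d(x_N,p)\leq c$.

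\textbf{Step 4 (contradiction).} Suppose $\EE[D_n]\geq\mu$ for every $n\in[N;M]$. By the defining property of $\theta$, the left side of Step 3 is then at least
\[
\mu\sum_{n=N}^{M}\lambda_n(1-\lambda_n)\geq \mu\cdot\frac{c^2+1}{\mu}=c^2+1>c^2 .
\]
This contradicts Step 3, so some $n\in[N;\Psi(\mu,N)]$ satisfies $\EE[D_n]<\mu$. Since $\EE[D_n]$ is exactly the expression in the statement, this is the claim.

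The main obstacle is Step 1: the freezing lemma has to be justified carefully, treating the measurability of $(\omega,\omega')\mapsto d^2(T_{v(\omega')}x_n(\omega),x_n(\omega))$ in the product space. Integrability is not an issue, because everything is bounded by $4c^2$ via Lemma \ref{lem:MannBounded}.
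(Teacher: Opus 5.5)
Your proof is correct and follows essentially the same route as the paper. Both arguments use the $\mathrm{CAT}(0)$ inequality at $z=p$, absorb the middle term via nonexpansiveness and $T_{v_n}p=p$, telescope against the bound $c^2$, and reach a contradiction through the rate of divergence $\theta$. The only difference is ordering: the paper telescopes the pathwise almost-sure inequality first and uses Fubini and independence only at the end to identify $\EE[d^2(T_{v_n}x_n,x_n)]$ with the stated expression, whereas you go through Lemma \ref{lem:CN} and the freezing lemma at the level of conditional expectations.
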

\begin{proof}
For any $n\in\NN$, by \ref{CN} we have
\begin{align*}
d^2(x_{n+1},p)&\leq (1-\lambda_n)d^2(x_n,p)+\lambda_nd^2(T_{v_n}x_n,p)-\lambda_n(1-\lambda_n)d^2(T_{v_n}x_n,x_n)\\
&\leq d^2(x_n,p)-\lambda_n(1-\lambda_n)d^2(T_{v_n}x_n,x_n)
\end{align*}
where here we use that $d(T_{v_n}p,p)=0$ and hence $d(T_{v_n}x_n,p)\leq d(x_n,p)$ almost surely. But then for any $N\in\NN$ we have
\[
\sum_{n=N}^k \lambda_n(1-\lambda_n)d^2(T_{v_n}x_n,x_n)\leq d^2(x_N,p)-d^2(x_{k+1},p)\leq d^2(x_N,p)\leq c^2
\]
and therefore, if $\EE\left[d^2(T_{v_n}x_n,x_n)\right]\geq \mu$ for all $n\in [N;\Psi(\mu,N)]$, then
\[
c^2+1\leq \mu\sum_{n=N}^{\theta(N,(c^2+1)/\mu)}\lambda_n(1-\lambda_n)\leq \sum_{n=N}^{\theta(N,(c^2+1)/\mu)} \lambda_n(1-\lambda_n)\EE\left[d^2(T_{v_n}x_n,x_n)\right] \leq c^2,
\]
a contradiction. The result follows as we have
\[
\EE\left[d^2(T_{v_n}x_n,x_n)\right]=\EE\left[\int_\Omega d^2(T_{v(\omega)}x_n,x_n)\,\PP(d\omega)\right]
\]
from Fubini's theorem, and since $v_n$ is independent of $x_n$ and distributed as $v$.
\end{proof}

\begin{lemma}
\label{lem:MannLiminf}
Let $\Psi$ be defined as in Lemma \ref{lem:MannLiminfSimple}, and set
\[
\Phi(\lambda,k,N):=\Psi\left(\frac{\lambda}{k+1},N\right)=\theta\left(N,\frac{(c^2+1)(k+1)}{\lambda}\right).
\]
Then $\Phi$ is a stochastic $\liminf$-modulus w.r.t. $F$ and $(AF_k)$ for $(x_n)$ i.e.
\[
\forall \lambda>0\ \forall k,N\in\NN\left(\PP\left(\forall n\in [N;\Phi(\lambda,k,N)](x_n\notin AF_k)\right)<\lambda\right).
\]
\end{lemma}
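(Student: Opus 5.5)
The plan is to reduce the probabilistic statement to the deterministic-in-expectation statement of Lemma \ref{lem:MannLiminfSimple} using Markov's inequality. Define the random variable
\[
W_n(\omega):=\int_\Omega d^2(T_{v(\omega')}x_n(\omega),x_n(\omega))\,\PP(d\omega'),
\]
which is $\mathcal{F}_n$-measurable by the joint measurability of $(v,x)\mapsto T_vx$ and Fubini. The key pointwise observation is that $x_n(\omega)\in AF_k$ if, and only if, $W_n(\omega)\leq 1/(k+1)$. This uses that $x_n(\omega)\in X_0=\overline{B}_c(p)$ almost surely by Lemma \ref{lem:MannBounded}, and that for a fixed point $z$ we have $W_n(\omega)=\EE[d^2(T_vz,z)]$ with $z=x_n(\omega)$. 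Hence, up to a null set,
\[
\{\forall n\in[N;\Phi(\lambda,k,N)]\,(x_n\notin AF_k)\}\subseteq\{\forall n\in[N;\Phi(\lambda,k,N)]\,(W_n>1/(k+1))\}.
\]

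Next, apply Lemma \ref{lem:MannLiminfSimple} with $\mu:=\lambda/(k+1)$. This yields some $n_0\in[N;\Psi(\lambda/(k+1),N)]=[N;\Phi(\lambda,k,N)]$ with $\EE[W_{n_0}]<\lambda/(k+1)$. The event above is contained in $\{W_{n_0}>1/(k+1)\}$. By Markov's inequality,
\[
\PP(W_{n_0}>1/(k+1))\leq (k+1)\EE[W_{n_0}]<\lambda,
\]
which gives the claim.

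The only delicate point is the measure-theoretic identification of $W_n(\omega)$ with $\EE[d^2(T_vz,z)]$ evaluated at $z=x_n(\omega)$, together with the measurability of $W_n$. This is exactly what Lemma \ref{lem:MannLiminfSimple} already encodes through its use of Fubini. I would handle it by noting that $(\omega,\omega')\mapsto d^2(T_{v(\omega')}x_n(\omega),x_n(\omega))$ is $\mathsf{F}\otimes\mathsf{F}$-measurable and nonnegative, so Tonelli applies. The remaining argument is only a short chain of inequalities.
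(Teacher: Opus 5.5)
Your proposal is correct and follows essentially the same route as the paper: apply Lemma \ref{lem:MannLiminfSimple} with $\mu:=\lambda/(k+1)$ to get some $n\in[N;\Phi(\lambda,k,N)]$ with $\EE[W_n]<\lambda/(k+1)$, then use Markov's inequality to conclude $\PP(x_n\notin AF_k)<\lambda$. Your explicit appeal to Lemma \ref{lem:MannBounded} (that $x_n\in X_0$ almost surely) justifies identifying $\{x_n\notin AF_k\}$ with $\{W_n>1/(k+1)\}$ up to a null set, a step the paper uses without comment.
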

\begin{proof}
Fix $\lambda>0$ and $k,N\in\NN$, and define $\mu:=\lambda/(k+1)$. By Lemma \ref{lem:MannLiminfSimple}, there exists $n\in [N;\Phi(\lambda,k,N)]$ such that
\[
\EE\left[\int_\Omega d^2(T_{v(\omega)}x_n,x_n)\,\PP(d\omega)\right]< \mu.
\]
We then have
\[
\PP\left(x_n\notin AF_k\right)=\PP\left( \int_\Omega d^2(T_{v(\omega)}x_n,x_n)\,\PP(d\omega)>\frac{1}{k+1}\right)\leq (k+1)\EE\left[\int_\Omega d^2(T_{v(\omega)}x_n,x_n)\,\PP(d\omega)\right]
\]
by Markov's inequality, and so $\PP\left(x_n\notin AF_k\right)<\lambda$.
\end{proof}

We now have everything we need to instantiate the main theorem of the paper, by which we can derive the following (quantitative) convergence theorem for the above stochastic Krasnoselskii-Mann-type scheme.

\begin{theorem}
\label{thm:application}
Let $(X,d)$ be a proper Hadamard space, $(V,\mathcal{V})$ a measure space, and $(T_v)_{v\in V}$ a family of nonexpansive mappings on $X$ such that $(v,x)\mapsto T_vx$ is measurable. 

Let $v$ be a $V$-valued random variable, and define
\[
F:=\{z\in X\mid T_vz=z\; \PP\text{-a.s.}\}.
\]
As before, we assume that $F\neq \emptyset$ and fix some $p\in F$ as well as a $c\geq 1$ with $d(x_0,p)\leq c$. Set $X_0:=\overline{B}_c(p)$. Define the scheme $(x_n)$ by
\[
x_{n+1}:=(1-\lambda_n)x_n\oplus \lambda_n T_{v_n}x_n
\]
for some $x_0\in X$, $(\lambda_n)\subset (0,1]$ with $\sum_{n=0}^\infty \lambda_n(1-\lambda_n)=\infty$, and $(v_n)$ a sequence of independent $V$-valued random variables distributed as $v$.

Then $x_n\to x$ almost surely, where $x\in F$ almost surely. Furthermore, if $\gamma(k)$ is a modulus of total boundedness for $X_0$, $\theta(N,b)$ is a rate of divergence for $\sum_{n=0}^\infty \lambda_n(1-\lambda_n)$, then a rate of metastability for $(x_n)$ is explicitly definable in those data. 
\end{theorem}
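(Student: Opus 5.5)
The plan is to instantiate Theorem \ref{thm:final} for the qualitative part and Theorem \ref{thm:mainQuant} for the rate, feeding in the data assembled in this section. We work with $G(a):=a^2$, $X_0:=\overline{B}_c(p)$ and $o:=p$.

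First we check the hypotheses of Theorem \ref{thm:final}. $X$ is proper Hadamard, hence complete and separable, and $X_0$ is compact. We have $F\neq\emptyset$ and $F=\bigcap_k AF_k$ with $AF_{k+1}\subseteq AF_k\subseteq X_0$. A small point is needed here: $F$ as defined is not a subset of $X_0$. So we replace $F$ by $F\cap X_0$, which contains $p$ and equals $\bigcap_k AF_k$. Explicit closedness follows from Lemma \ref{lem:FExpClosed}. Indeed, if $p'$ has $AF_M\cap\overline{B}_{1/(N+1)}(p')\neq\emptyset$ for all $M,N$, then choosing $M=12k+11$ and $N\geq 12k+11$ gives $p'\in AF_k$ for every $k$. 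The function $G$ is increasing, continuous and coercive with $G(0)=0$. Stochastic quasi-Fej\'er monotonicity with $\chi_n=\eta_n=0$ holds for every $z\in F\cap X_0$, by the same computation as in Lemma \ref{lem:Fejer} with the error terms vanishing since $\EE[d^2(T_vz,z)]=0$. The stochastic $\liminf$-property follows from the modulus $\Phi$ of Lemma \ref{lem:MannLiminf} via the equivalence discussed before Definition \ref{def:stochasticLiminfMod}. Finally, $G(d(x_0,p)+c)\leq 4c^2$ has finite mean. Theorem \ref{thm:final} then gives $x_n\to x$ almost surely with $x\in F\cap X_0\subseteq F$.

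For the rate we plug the following explicit data into Theorem \ref{thm:mainQuant}:
\begin{itemize}
\item the modulus $\zeta(\lambda,r,n):=64c^2(r+1)^2$ from Lemma \ref{lem:Fejer};
\item $\Phi(\lambda,k,N)=\theta(N,(c^2+1)(k+1)/\lambda)$ from Lemma \ref{lem:MannLiminf};
\item $f\equiv 1$ and $h\equiv 2$, which work because $\eta_n=\chi_n=0$;
\item $b_\lambda:=c$ and the modulus of absolute continuity $\mu_n(\varepsilon)=\varepsilon/4c^2$ from Remark \ref{rem:KMAbsCont}, which is uniform in $z\in X_0$ and $n$ since $d(x_n,z)\leq 2c$;
\item $b_0:=4c^2+1$ and the bound $d:=c$ for $X_0$ relative to $p$;
\item the modulus of uniform continuity of $a\mapsto a^2$ on $[0,b]$, namely $\iota_b(k):=2b(k+1)$;
\item the modulus of inverse continuity at $0$, namely $\nu(k):=(k+1)^2$;
\item the given modulus $\gamma$.
\end{itemize}
All of these are explicit in $c$, $\gamma$ and $\theta$, so $\Delta$ is explicitly definable from those data, with $c$ treated as a fixed parameter.

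The main obstacle is the mismatch between $F$ and $\bigcap_k AF_k$, since the $AF_k$ are cut down to $X_0$. We will handle it by working throughout with $F\cap X_0$ and noting that every argument only ever used points of $AF_k$.

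A second, minor point concerns the proof of Theorem \ref{thm:final}. There, uniform stochastic quasi-Fej\'er monotonicity is derived from the ordinary version. For the rate we instead use Lemma \ref{lem:Fejer} directly, which already gives the uniform modulus, and Lemma \ref{lem:MannBounded}, which gives almost sure boundedness without the non-finitary Robbins--Siegmund step of Remark \ref{rem:uniformfinite}.
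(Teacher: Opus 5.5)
Your proposal is correct and follows the paper's proof: you use Theorem \ref{thm:final} with $G(a)=a^2$ and Lemmas \ref{lem:Fejer}, \ref{lem:MannLiminf}, \ref{lem:FExpClosed} for the qualitative part, and you feed Theorem \ref{thm:mainQuant} the same moduli as the paper ($\iota_b(k)=2b(k+1)$, $\nu(k)=(k+1)^2$, $f\equiv 1$, $h\equiv 2$, $\zeta$, $\Phi$, $b_\lambda=c$, $b_0=4c^2+1$, $\mu_n(\varepsilon)=\varepsilon/4c^2$) for the rate. Replacing $F$ by $F\cap X_0$ is a legitimate tidying of a point the paper glosses over, since each $AF_k\subseteq X_0$ and so the claim $\bigcap_k AF_k=F$ only holds for that intersection; your $\Phi$ with the factor $c^2+1$ is also the one actually proved in Lemma \ref{lem:MannLiminf}, whereas the paper's proof writes $2c+1$.
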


\begin{proof}
The first part follows from Theorem \ref{thm:final} for $G(a):=a^2$ and $(AF_k)$ as defined in Section \ref{sec:common:spaces}, where in particular $(x_n)$ is stochastically quasi-Fej\'er monotone by Lemma \ref{lem:Fejer} (by which it also satisfies the stronger property of being uniformly stochastic quasi-Fej\'er monotone), has the stochastic $\liminf$ property by Lemma \ref{lem:MannLiminf}, and is explicitly closed by Lemma \ref{lem:FExpClosed}. For the second part, we appeal to Theorem \ref{thm:mainQuant}, which in addition to the modulus of uniform boundedness $\gamma(k)$, has its assumptions satisfied as follows:
\begin{itemize}
	\item the modulus of continuity and inverse continuity for $G(a):=a^2$ are given by $\iota_b(k):=2b(k+1)$ and $\nu(k):=(k+1)^2$, respectively;
	\item we set $o:=p$ and then $c$ is by definition a bound relative to $p$ for $X_0$;
	\item $\eta_n:=\chi_n:=0$ so we can set $f(\lambda):=1$ and $h(\lambda):=2$;
	\item $\zeta(\lambda,r,n):=64c^2(r+1)^2$ as defined in Lemma \ref{lem:Fejer};
	\item $\Phi(\lambda,k,N):=\theta(N,(2c+1)(k+1)/\lambda)$ as defined in Lemma \ref{lem:MannLiminf};
	\item since $d(x_0,p)+c\leq 2c$ we have $\EE[(d(x_0,p)+c)^2]<b_0$ for $b_0:=4c^2+1$.
\end{itemize}
Recall also Remark \ref{rem:KMAbsCont} for definitions of moduli of absolute continuity and of almost sure boundedness for the respective sequence in terms of $c$.

Therefore, the rate of metastability as given in Theorem \ref{thm:mainQuant} depends only on $\gamma(k)$, $c$ and $\theta(N,b)$.
\end{proof}

\noindent\textbf{Acknowledgments.} The authors want to thank Ulrich Kohlenbach and Pedro Pinto for helpful discussions and remarks on the topic of this paper.\\

\noindent\textbf{Funding.} The first author was partially supported by the EPSRC Centre for Doctoral Training in Digital Entertainment EP/L016540/1, and the third author was partially supported by the EPSRC grant EP/W035847/1. The second author was partially supported by the Deutsche Forschungsgemeinschaft Project DFG KO 1737/6-2.

\bibliographystyle{plain}
\bibliography{ref}

\begin{thebibliography}{10}

\bibitem{AlexanderKapovitchPetrunin2023}
S.~Alexander, V.~Kapovitch, and A.~Petrunin.
\newblock {\em {Alexandrov Geometry: Foundations}}, volume 236 of {\em Graduate
  Studies in Mathematics}.
\newblock American Mathematical Society, Providence, RI, 2024.

\bibitem{ArakcheevBauschke2025b}
A.~Arakcheev and H.H. Bauschke.
\newblock {Fej\'er and Fej\'er$^*$ Monotonicity: New Results and Limiting
  Examples}, 2025.
\newblock Preprint, available at \url{https://arxiv.org/abs/2512.17039}.

\bibitem{ArakcheevBauschke2025a}
A.~Arakcheev and H.H. Bauschke.
\newblock {On Opial's Lemma}, 2025.
\newblock Preprint, available at \url{https://arxiv.org/abs/2503.22004}.

\bibitem{AvigadGerhardyTowsner2010}
J.~Avigad, P.~Gerhardy, and H.~Towsner.
\newblock {Local stability of ergodic averages}.
\newblock {\em Transactions of the American Mathematical Society},
  362(1):261--288, 2010.

\bibitem{BauschkeBorweinBorwein2003}
H.H. Bauschke, J.M. Borwein, and P.L. Borwein.
\newblock {Bregman monotone optimization algorithms}.
\newblock {\em SIAM Journal on Control and Optimization}, 42(2):596--636, 2003.

\bibitem{BauschkeCombettes2017}
H.H. Bauschke and P.L. Combettes.
\newblock {\em {Convex Analysis and Monotone Operator Theory in Hilbert
  Spaces}}.
\newblock CMS Books in Mathematics. Springer Cham, 2nd edition, 2017.

\bibitem{Bacak2014a}
M.~Ba\v{c}\'ak.
\newblock {\em {Convex analysis and optimization in Hadamard spaces}},
  volume~22 of {\em De Gruyter Series in Nonlinear Analysis and Applications}.
\newblock Walter de Gruyter GmbH, Berlin/Boston, 2014.

\bibitem{BacakSearstonSims2012}
M.~Ba\v{c}\'ak, I.~Searston, and B.~Sims.
\newblock {Alternating projections in CAT(0) spaces}.
\newblock {\em Journal of Mathematical Analysis and Applications},
  385:599--607, 2012.

\bibitem{BehlingBelloCruzIusemLiuSantos2024}
R.~Behling, Y.~Bello-Cruz, A.~Iusem, D.~Liu, and L.R. Santos.
\newblock {A finitely convergent circumcenter method for the convex feasibility
  problem}.
\newblock {\em SIAM Journal on Optimization}, 34:2535--2556, 2024.

\bibitem{BehlingBelloCruzIusemAlvesRibeiroSantos2024}
R.~Behling, Y.~Bello-Cruz, A.N. Iusem, A.~Alves Ribeiro, and L.-R. Santos.
\newblock {Fej\'er$^*$ monotonicity in optimization algorithms}, 2024.
\newblock Preprint, available at \url{https://arxiv.org/abs/2410.08331}.

\bibitem{BergNikolaev2008}
I.D. Berg and I.G. Nikolaev.
\newblock {Quasilinearization and curvature of Aleksandrov spaces}.
\newblock {\em Geometriae Dedicata}, 133:195--218, 2008.

\bibitem{BridsonHaefliger1999}
M.R. Bridson and A.~Haefliger.
\newblock {\em {Metric Spaces of Non-Positive Curvature}}, volume 319 of {\em
  Grundlehren der mathematischen Wissenschaften}.
\newblock Springer Berlin, Heidelberg, 1999.

\bibitem{Chashka1994}
A.A. Chashka.
\newblock {Fluctuations in martingales}.
\newblock {\em Uspekhi Matematicheskikh Nauk}, 49(2):179--180, 1994.
\newblock English translation in Russian Math. Surveys 49:2 (1994).

\bibitem{Combettes2001}
P.L. Combettes.
\newblock {Quasi-Fej\'erian Analysis of Some Optimization Algorithms}.
\newblock In D.~Butnariu, Y.~Censor, and S.~Reich, editors, {\em {Inherently
  Parallel Algorithms in Feasibility and Optimization and Their Applications}},
  volume~8 of {\em Studies in Computational Mathematics}, pages 115--152.
  North-Holland, Amsterdam, 2001.

\bibitem{Combettes2009}
P.L. Combettes.
\newblock {Fej\'er monotonicity in convex optimization}.
\newblock In C.A. Floudas and P.M. Pardalos, editors, {\em {Encyclopedia of
  Optimization}}, pages 1016--1024. Springer, Boston, MA, 2009.

\bibitem{CombettesMadariaga2025}
P.L. Combettes and J.I. Madariaga.
\newblock {A Geometric Framework for Stochastic Iterations}, 2025.
\newblock Preprint, available at \url{https://arxiv.org/abs/2504.02761}.

\bibitem{CombettesPesquet2015}
P.L. Combettes and J.-C. Pesquet.
\newblock {Stochastic quasi-Fej\'er block-coordinate fixed point iterations
  with random sweeping}.
\newblock {\em SIAM Journal on Optimization}, 25(2):1221--1248, 2015.

\bibitem{CombettesPesquet2019}
P.L. Combettes and J.-C. Pesquet.
\newblock {Stochastic quasi-Fej\'er block-coordinate fixed point iterations
  with random sweeping II: mean-square and linear convergence}.
\newblock {\em Mathematical Programming}, 174(1):433--451, 2019.

\bibitem{CombettesVu2013}
P.L. Combettes and B.C.~V\ u.
\newblock {Variable metric quasi-Fejér monotonicity}.
\newblock {\em Nonlinear Analysis: Theory, Methods \& Applications}, 78:17--31,
  2013.

\bibitem{Doob1953}
J.L. Doob.
\newblock {\em {Stochastic Processes}}.
\newblock Wiley, N.Y., 1953.

\bibitem{Doob1961}
J.L. Doob.
\newblock {Notes on Martingale Theory}.
\newblock In {\em Berkeley Symposium on Mathematical Statistics and
  Probability}, pages 95--102. University of California Press, 1961.

\bibitem{Eremin1968a}
I.I. Eremin.
\newblock {Methods of Fej\'er approximations in convex programming}.
\newblock {\em Mathematical notes of the Academy of Sciences of the USSR},
  3:139--149, 1968.

\bibitem{Eremin1968b}
I.I. Eremin.
\newblock {On the speed of convergence in the method of Fej\'er
  approximations}.
\newblock {\em Mathematical notes of the Academy of Sciences of the USSR},
  4:522--527, 1968.

\bibitem{Ermolev1969}
Y.M. Ermol'ev.
\newblock {On the method of generalized stochastic gradients and quasi-Fej\'er
  sequences}.
\newblock {\em Cybernetics}, 5:208--220, 1969.

\bibitem{Ermolev1971}
Y.M. Ermol'ev.
\newblock {On convergence of random quasi-Fej\'er sequences}.
\newblock {\em Cybernetics}, 7:655--656, 1971.

\bibitem{ErmolevTuniev1973}
Y.M. Ermol'ev and A.D. Tuniev.
\newblock {Random Fej\'er and quasi-Fej\'er sequences}.
\newblock {\em Theory of Optimal Solutions – Akademiya Nauk Ukrainsko\u{i},
  SSR Kiev}, 2:76--83, 1968.
\newblock in Russian; English translation in Amer. Math. Soc. Select. Translat.
  Math. Statist. Probab., 13 (1973), pp. 143–148.

\bibitem{Fejer1922}
L.~Fej\'er.
\newblock {\"Uber die Lage der Nullstellen von Polynomen, die aus
  Minimumforderungen gewisser Art entspringen}.
\newblock {\em Mathematische Annalen}, 85:41--48, 1922.

\bibitem{Gerhardy2008}
P.~Gerhardy.
\newblock {Proof mining in topological dynamics}.
\newblock {\em Notre Dame Journal of Formal Logic}, 49:431--446, 2008.

\bibitem{JonesKaufmanRosenblattWierdl1998}
R.L. Jones, R.~Kaufman, J.M. Rosenblatt, and M.~Wierdl.
\newblock Oscillation in ergodic theory.
\newblock {\em Ergodic Theory and Dynamical Systems}, 18(4):889--935, 1998.

\bibitem{Kachurovskii1996}
A.G. Kachurovskii.
\newblock {The rate of convergence in ergodic theorems}.
\newblock {\em Russian Mathematical Surveys}, 51(4):653--703, 1996.

\bibitem{Klenke2020}
A.~Klenke.
\newblock {\em {Probability Theory}}.
\newblock Springer, 3rd edition, 2020.

\bibitem{Kohlenbach2008}
U.~Kohlenbach.
\newblock {\em {Applied Proof Theory: Proof Interpretations and their Use in
  Mathematics}}.
\newblock Springer Monographs in Mathematics. Springer-Verlag Berlin
  Heidelberg, 2008.

\bibitem{Kohlenbach2019}
U.~Kohlenbach.
\newblock {Proof-theoretic Methods in Nonlinear Analysis}.
\newblock In B.~Sirakov, P.~Ney de~Souza, and M.~Viana, editors, {\em
  {Proceedings of ICM 2018}}, volume~2, pages 61--82. World Scientific,
  Singapure, 2019.

\bibitem{Kohlenbach2020}
U.~Kohlenbach.
\newblock {Local formalizations in nonlinear analysis and related areas and
  proof-theoretic tameness}.
\newblock In P.~Weingartner and H.-P. Leeb, editors, {\em {Kreisel's Interests.
  On the Foundations of Logic and Mathematics}}, volume~41 of {\em Tributes},
  pages 45--61. College Publications, London, 2020.

\bibitem{KohlenbachLeusteanNicolae2018}
U.~Kohlenbach, L.~Leu\c{s}tean, and A.~Nicolae.
\newblock {Quantitative results on Fej\'er monotone sequences}.
\newblock {\em Communications in Contemporary Mathematics}, 20(2), 2018.
\newblock 1750015, 42pp.

\bibitem{KohlenbachLopezAcedoNicolae2019}
U.~Kohlenbach, G.~Lopez-Acedo, and A.~Nicolae.
\newblock {Moduli of regularity and rates of convergence for Fej\'er monotone
  sequences}.
\newblock {\em Israel Journal of Mathematics}, 232:261--297, 2019.

\bibitem{KohlenbachOliva2003}
U.~Kohlenbach and P.~Oliva.
\newblock {Proof Mining: A Systematic Way of Analysing Proofs in Mathematics}.
\newblock {\em Proceedings of the Steklov Institute of Mathematics},
  242:136--164, 2003.

\bibitem{KohlenbachPinto2023}
U.~Kohlenbach and P.~Pinto.
\newblock {Fej\'er Monotone Sequences Revisited}.
\newblock {\em Journal of Convex Analysis}, 2025.
\newblock Preprint available at \url{https://arxiv.org/abs/2310.06528}.

\bibitem{KohlenbachSafarik2014}
U.~Kohlenbach and P.~Safarik.
\newblock Fluctuations, effective learnability and metastability in analysis.
\newblock {\em Annals of Pure and Applied Logic}, 165:266--304, 2014.

\bibitem{Metivier1982}
M.~M\'etivier.
\newblock {\em {Semimartingales}}, volume~2 of {\em De Gruyter Studies in
  Mathematics}.
\newblock De Gruyter, Berlin, 1982.

\bibitem{MotzkinSchoenberg1954}
T.~Motzkin and I.~Schoenberg.
\newblock {The Relaxation Method for Linear Inequalities}.
\newblock {\em Canadian Journal of Mathematics}, 6:393--404, 1954.

\bibitem{NeriOlivaPischke2026}
M.~Neri, P.~Oliva, and N.~Pischke.
\newblock {A systematic way of analysing proofs in probability theory}, 2026.
\newblock Manuscript in preparation.

\bibitem{NeriPischke2024}
M.~Neri and N.~Pischke.
\newblock {Proof mining and probability theory}.
\newblock {\em Forum of Mathematics, Sigma}, 13, 2025.
\newblock e187, 47pp.

\bibitem{NeriPischkePowell2025}
M.~Neri, N.~Pischke, and T.~Powell.
\newblock {On the asymptotic behaviour of stochastic processes, with
  applications to supermartingale convergence, Dvoretzky's approximation
  theorem, and stochastic quasi-Fej\'er monotonicity}, 2025.
\newblock Preprint, available at \url{https://arxiv.org/abs/2504.12922}.

\bibitem{NeriPowell2025a}
M.~Neri and T.~Powell.
\newblock {On quantitative convergence for stochastic processes: Crossings,
  fluctuations and martingales}.
\newblock {\em Transactions of the American Mathematical Society, Series B},
  12:974--–1019, 2025.

\bibitem{NeriPowell2026}
M.~Neri and T.~Powell.
\newblock {A quantitative {R}obbins-{S}iegmund theorem}.
\newblock {\em Annals of Applied Probability}, 36(1):636--651, 2026.

\bibitem{NeriPischkePowell2025b}
Morenikeji Neri, Nicholas Pischke, and Thomas Powell.
\newblock Generalized learnability of stochastic principles.
\newblock In {\em Proceedings of Computability in Europe (CiE'25)}, volume
  15764 of {\em LNCS}, pages 333--348, 2025.

\bibitem{Neumann2015}
E.~Neumann.
\newblock {Computational problems in metric fixed point theory and their
  Weihrauch degrees}.
\newblock {\em Logical Methods in Computer Science}, 11(4:20), 2015.
\newblock 44pp.

\bibitem{Pischke2025}
N.~Pischke.
\newblock {Generalized Fej\'er monotone sequences and their finitary content}.
\newblock {\em Optimization}, 74(14):3771--3838, 2025.

\bibitem{Pischke2026}
N.~Pischke.
\newblock {On Busemann subgradient methods for stochastic minimization in
  Hadamard spaces}, 2026.
\newblock Preprint, available at \url{https://arxiv.org/abs/2602.08127}.

\bibitem{PischkePowell2024}
N.~Pischke and T.~Powell.
\newblock {Asymptotic regularity of a generalised stochastic Halpern scheme},
  2024.
\newblock Preprint, available at \url{https://arxiv.org/abs/2411.04845}.

\bibitem{RobbinsSiegmund1971}
H.~Robbins and D.~Siegmund.
\newblock {A convergence theorem for non-negative almost supermartingales and
  some applications}.
\newblock In {\em Optimizing methods in statistics}, pages 233--257. Elsevier,
  1971.

\bibitem{Specker1949}
E.~Specker.
\newblock {Nicht konstruktiv beweisbare S\"atze der Analysis}.
\newblock {\em The Journal of Symbolic Logic}, 14:145--208, 1949.

\bibitem{Tao2008b}
T.~Tao.
\newblock {Norm convergence of multiple ergodic averages for commuting
  transformations}.
\newblock {\em Ergodic Theory and Dynamical Systems}, 28(2):657--688, 2008.

\bibitem{Tao2008a}
T.~Tao.
\newblock {\em {Soft Analysis, Hard Analysis, and the Finite Convergence
  Principle}}, pages 17--29.
\newblock American Mathematical Society, Providence, RI, 2008.

\bibitem{Ville1939}
J.~Ville.
\newblock {\em {\'Etude Critique de la Notion de Collectif}}.
\newblock PhD thesis, \'Ecole Polytechnique, 1939.

\bibitem{Williams1991}
D.~Williams.
\newblock {\em {Probability with Martingales}}.
\newblock Cambridge University Press, Cambridge, 1991.

\end{thebibliography}

\end{document}